\numberwithin{equation}{section}
\theoremstyle{definition}
\newtheorem{Definition}{Definition}[section]
\newtheorem{Example}[Definition]{Example}
\newtheorem{Remark}[Definition]{Remark}
\newtheorem{Assumption}[Definition]{Assumption}
\newtheorem{Problem}[Definition]{Problem}
\theoremstyle{plain}
\newtheorem{Theorem}[Definition]{Theorem}
\newtheorem{MainTheorem}{Theorem}
\newtheorem{Proposition}[Definition]{Proposition}
\newtheorem{Corollary}[Definition]{Corollary}
\newtheorem{Lemma}[Definition]{Lemma}
\newcommand{\al}{\alpha}
\newcommand{\ga}{\gamma}
\newcommand{\Ga}{\Gamma}
\newcommand{\ep}{\varepsilon}
\newcommand{\la}{\lambda}
\newcommand{\si}{\sigma}
\newcommand{\om}{\omega}
\newcommand{\N}{\mathbb{N}}
\newcommand{\Z}{\mathbb{Z}}
\newcommand{\R}{\mathbb{R}}
\newcommand{\C}{\mathbb{C}}
\newcommand{\K}{\Bbbk}
\newcommand{\T}{\mathbb{T}}
\newcommand{\Be}{\mathbf{e}}
\newcommand{\Bt}{\boldsymbol{t}}
\newcommand{\Bsi}{\boldsymbol{\sigma}}
\newcommand{\Fsl}{\mathfrak{sl}}
\newcommand{\Fg}{\mathfrak{g}}
\newcommand{\Fh}{\mathfrak{h}}
\newcommand{\Fm}{\mathfrak{m}}
\newcommand{\Fn}{\mathfrak{n}}
\newcommand{\CA}{\mathcal{A}}
\newcommand{\CB}{\mathcal{B}}
\newcommand{\CC}{\mathcal{C}}
\newcommand{\CI}{\mathcal{I}}
\newcommand{\CJ}{\mathcal{J}}
\newcommand{\CT}{\mathcal{T}}
\DeclareMathOperator{\ad}{ad}
\DeclareMathOperator{\Ann}{Ann}
\DeclareMathOperator{\Aut}{Aut}
\DeclareMathOperator{\Id}{Id}
\DeclareMathOperator{\ord}{ord}
\DeclareMathOperator{\sgn}{sgn}
\DeclareMathOperator{\Specm}{Specm}
\DeclareMathOperator{\Stab}{Stab}
\DeclareMathOperator{\Supp}{Supp}
\newcommand{\iv}[2]{\llbracket #1,#2 \rrbracket}
\newcommand{\un}{\underline}
\renewcommand{\tilde}{\widetilde}
\newcommand{\setupcylinder}[3]{
  \def\r{2}       
  \def\h{5}       
  
  \def\m{#1}      
  \def\n{#2}      

  \def\vh{#3}     
  
  \def\res{12}     
  
  \def\d{(360/\m)/\res}         
  \def\s{-(\h/\vh)*(\n/\m)/\res} 
  \def\v{\h/\vh}                 

  \draw[-,Gray,very thin] (\r,0,0)
    \foreach \t in {5,10,...,360}
      {--({\r*cos(\t)},{\r*sin(\t)},0)};
  \draw[-,Gray,very thin] (\r,0,\vh*\v)
    \foreach \t in {5,10,...,360}
      {--({\r*cos(\t)},{\r*sin(\t)},\vh*\v)};

  \foreach \t in {1,...,\m}
  {
    \def\aa{(\t-1)*360/\m}
    \draw[-,Gray,very thin] ({\r*cos(\aa)},{\r*sin(\aa)},0) to ({\r*cos(\aa)},{\r*sin(\aa)},\vh*\v);
  }
  
  \foreach \t in {-\n,...,\vh}
  {
    \drawlatticepath{0}{\n+\t}{0,0,0,0,0,0,0}{-,Gray,very thin}
  }
 
}
\newcommand{\drawlatticestep}[4]{
  \def\p{#1*\d*\res}
  \def\z{#1*\s*\res + #2*\v} 

  \ifthenelse{#3 = 0}
  {
    \foreach \t in {1,2,...,\res}
    {
      \def\zz{\z+\t*\s}
      \pgfmathifthenelse{(\zz<=\h)*(\zz>=0)}{"\noexpand \draw[#4]
          ({\r*cos(\p+(\t-1)*\d)},{\r*sin(\p+(\t-1)*\d)},{\z+(\t-1)*\s})--
          ({\r*cos(\p+\t*\d)},{\r*sin(\p+\t*\d)},{\zz});"}{}\pgfmathresult
    }
  }
  {
    \draw[#4] ({\r*cos(\p)},{\r*sin(\p)},{\z})--
                          ({\r*cos(\p)},{\r*sin(\p)},{\z+\v});
  }
}
\newcommand{\drawlatticedot}[2]{
  \def\p{#1*\d*\res}
  \def\z{#1*\s*\res + #2*\v} 
  \fill ({\r*cos(\p)},{\r*sin(\p)},{\z}) circle (2pt);
}
\newcounter{x}
\newcounter{y}
\newcommand{\drawlatticepath}[4]{
  \setcounter{x}{#1}
  \setcounter{y}{#2}
  \foreach \i in {#3} {
    \drawlatticestep{\value{x}}{\value{y}}{\i}{#4}
    \addtocounter{x}{1-\i}
    \addtocounter{y}{\i}
  }
}
\title{Noncommutative fiber products and lattice models}
\author{Jonas T. Hartwig}
\address{Department of Mathematics, Iowa State University, Ames, IA 50014}
\email{jth@iastate.edu}
\urladdr{http://jth.pw}
\date{}
\begin{document}
\maketitle
\begin{abstract}
We establish a connection between the representation theory of certain noncommutative singular varieties and two-dimensional lattice models. Specifically, we consider noncommutative biparametric deformations of the fiber product of two Kleinian singularities of type $A$.
Special examples are closely related to Lie-Heisenberg algebras, the affine Lie algebra $A_1^{(1)}$, and a finite W-algebra associated to $\Fsl_4$.

The algebras depend on two scalars and two polynomials that must satisfy the Mazorchuk-Turowska Equation (MTE), which we re-interpret as a quantization of the ice rule (local current conservation) in statistical mechanics. Solutions to the MTE, previously classified by the author and D. Rosso, can accordingly be expressed in terms of multisets of higher spin vertex configurations on a twisted cylinder.

We first reduce the problem of describing the category of weight modules to the case of a single configuration $\mathscr{L}$. Secondly, we classify all simple weight modules over the corresponding algebras $\mathcal{A}(\mathscr{L})$, in terms of the connected components of the cylinder minus $\mathscr{L}$.
Lastly, we prove that $\mathcal{A}(\mathscr{L})$ are crystalline graded rings (as defined by Nauwelaerts and Van Oystaeyen), and describe the center of $\mathcal{A}(\mathscr{L})$ explicitly in terms of $\mathscr{L}$.

Along the way we prove several new results about twisted generalized Weyl algebras and their simple weight modules.
\end{abstract}

\section{Introduction}
In this section we motivate and define the main objects $\CA_{\al_1,\al_2}(p_1,p_2)$ and their connection with lattice models. The three main results of the paper are stated in Section \ref{sec:main-results}. In Section \ref{sec:tgwa} we give some background on the more general framework of twisted generalized Weyl algebras and also prove new results. The following three sections are dedicated to the proofs of the main theorems. Lastly, in Section \ref{sec:conclusion}, we reflect on how some of the outcomes relates to statistical mechanics and percolation, and state some open problems. A short appendix contains details about Example \ref{ex:finite-W}.

\subsection*{Notation and terminology}
The set of non-negative integers is denoted by $\N$. 
All rings $R$ are assumed to have a multiplicative identity $1=1_R$, and ring homomorphisms $R\to S$ assumed to map $1_R\mapsto 1_S$. By a \emph{regular} element of a ring we mean an element which is not a zero-divisor.
Without modifier, ``ideal'' means two-sided ideal, and ``module'' means left module.

\subsection*{Acknowledgements}
The author is grateful for interesting discussions with Andrew Linshaw, Tomoyuki Arakawa, Michael Damron, Christoffer Hoffman and Bernard Lidick\'{y}.

\subsection{Noncommutative Kleinian singularities}
One of the most investigated objects in the field of noncommutative algebraic geometry are the \emph{noncommutative type $A$ Kleinian singularities} $\CA_\hbar(f)$. These algebras were introduced by Hodges \cite{Hod1993}, studied as special cases of generalized Weyl algebras by Bavula \cite{Bav1991}, and are also known as polynomial Heisenberg algebras in the physics literature \cite{CarFer2004}. 
They have been studied intensively from many points of view
\cite{Bav1991,Bav1992,Bav1996,Jor1993,BavJor2001,Brz2016}.
A uniform generalization to any $ADE$ type was given in \cite{CraHol1998}, and type $D$ was studied separately in \cite{Bod2006,Lev2009}.

They are defined as follows. Let $\hbar\in\C$ be a deformation parameter and $f$ a nonzero polynomial. Then $\CA_\hbar(f)$ is the associative algebra generated by $\{X^+, X^-, H\}$ subject to defining relations
\begin{subequations}\label{eq:NCKS-rels}
\begin{alignat}{4}
 HX^+ - X^+H &= \hbar X^+ &\quad\quad\quad X^+X^- &= f\big(H-\tfrac{\hbar}{2}\big) \\
 HX^- - X^-H &=-\hbar X^- &\quad\quad\quad X^-X^+ &= f\big(H+\tfrac{\hbar}{2}\big)
\end{alignat}
\end{subequations}
If all zeros of $f$ belong to a single coset in $\C$ modulo $\Z\hbar$, then letting $\hbar\to 0$ we get
\[\CA_0(f)\simeq \C[x,y,z]/(xy-z^n),\quad n=\deg f,\]
which is the algebra of functions on the Kleinian singularity of type $A_{n-1}$.
If $\hbar\neq 0$ then, after rescaling $H$, one can assume $\hbar=1$.

\subsection{Noncommutative Kleinian fiber products}
The purpose of this paper is to study the following rank two generalization of noncommutative Kleinian singularities.

\begin{Definition}
\label{def:NCKFP}
Let $(\al_1,\al_2)\in\C^2$ and $(p_1,p_2)\in(\C[u]\setminus\{0\})^2$ where $u$ is an indeterminate.
Let $\tilde{\CA}=\tilde{\CA}_{\al_1,\al_2}(p_1,p_2)$ be the associative algebra with generators
$\{H,X_1^+,X_1^-,X_2^+,X_2^-\}$ subject to the following defining relations:
\begin{subequations}\label{eq:Aalbepq-rels}
\begin{alignat}{6}
HX_i^+ - X_i^+ H &= \al_i X_i^+ &\qquad X_i^+ X_i^- &= p_i\big(H-\tfrac{\al_i}{2}\big) &\qquad X_1^+ X_2^- &= X_2^- X_1^+ \\ 
HX_i^- - X_i^- H &=-\al_i X_i^- &\qquad X_i^- X_i^+ &= p_i\big(H+\tfrac{\al_i}{2}\big) &\qquad X_1^- X_2^+ &= X_2^+ X_1^-
\end{alignat}
\end{subequations}
The corresponding \emph{noncommutative (type $A\times A$) Kleinian fiber product} is defined as $\CA=\CA_{\al_1,\al_2}(p_1,p_2)=\tilde{\CA}/\CI$, where $\CI$ is the ideal consisting of all $a\in\tilde{\CA}$ such that $f(H)\cdot a=0$ for some nonzero polynomial $f\in\C[u]$. 
\end{Definition}

The are several reasons for taking the quotient by the ideal $\CI$. One reason is that $\tilde{\CA}$ is in general not a domain, but $\CA$ is. In fact, it can be shown that $\CI$ is the unique minimal completely prime ideal trivially intersecting $\C[H]$.
Another reason is that $\CI$ takes care of the missing relations between $X_1^+$ and $X_2^+$ (and between $X_1^-$ and $X_2^-$) in the following way. In $\tilde{\CA}$ one can deduce relations like
(see proof of Proposition \ref{prp:NKFP-Consistency})
\[ X_1^+X_2^+ p_1\big(H+\tfrac{\al_1}{2}\big) = X_2^+X_1^+ p_1\big(H+\tfrac{\al_1}{2}+\al_2\big).\]
If the polynomials in the right hand sides have common factors we want to cancel those, and this is allowed in the quotient $\CA$. For example, if $\al_2=0$ then $X_1^+$ and $X_2^+$ commute in $\CA$.

\begin{Example}[Noncommutative Kleinian singularities]
If $\al_2=0$ and $p_2(u)=1$ (constant) then $\CA_{\al_1,0}(p_1,1)\simeq \CA_{\al_1}(p_1)[Z,Z^{-1}]$,
 a central extension of a noncommutative Kleinian singularity.
\end{Example}

\begin{Example}[Commutative limit] Suppose that all zeros of $p_1$ and $p_2$ belong to a single coset in $\C$ modulo $\Z\al_1+\Z\al_2$. Then, as $\al_1,\al_2\to 0$, the corresponding noncommutative Kleinian fiber product $\CA_{\al_1,\al_2}(p_1,p_2)$ becomes isomorphic to the commutative algebra
\[ \mathscr{O}(X_{n,m})=\C[x_1,x_2,y_1,y_2,z]/\big(x_1y_1-z^n,\, x_2y_2-z^m\big), \]
where $n=\deg p_1$ and $m=\deg p_2$. This is the algebra of regular functions on a fiber product of two type $A$ Kleinian singularities along the subvariety $z_1=z_2$.
Thus the algebras $\CA_{\al_1,\al_2}(p_1,p_2)$ are noncommutative biparametric deformations of $\mathscr{O}(X_{n,m})$.
\end{Example}

\begin{Example}[Skew group algebra]
If $p_1$ and $p_2$ are both constant polynomials, then
$\CA_{\al_1,\al_2}(p_1,p_2)$ is isomorphic to the skew group algebra $\C[H]\ast_\si \Z^2$,
where $\si:\Z^2\to\Aut_\C\big(\C[H]\big)$ is given by $\si(k_1,k_2)\big(f(H)\big) = f(H-k_1\al_1-k_2\al_2)$.
\end{Example}

\begin{Example}[Twisted generalized Weyl algebras]
The class of \emph{twisted generalized Weyl algebras (TGWAs)} was introduced in \cite{MazTur1999} and further studied in \cite{MazPonTur2003,Har2006,HarOin2013}.
Any noncommutative Kleinian fiber product is a TGWA of rank two. Conversely, any TGWA of rank two with base ring $R=\C[u]$ and automorphisms $\si_i(u)=u-\al_i$ for $i=1,2$ is a noncommutative Kleinian fiber product (see Corollary \ref{cor:fiber-TGWA}). 
On the other hand, $\mathcal{A}_{\al_1,\al_2}(p_1,p_2)$ is graded isomorphic to a rank two \emph{generalized Weyl algebra} in the sense of Bavula \cite{Bav1992} if and only if $0\in\{\al_1,\, \al_2,\, \deg p_1,\, \deg p_2\}$.
\end{Example}

\begin{Example}[Type $A_2$]
Let $(\al_1,\al_2)=(-1,1)$ and $p_1(u)=p_2(u)=u$.
Then $\CA_{\al_1,\al_2}(p_1,p_2)$ is isomorphic to the twisted generalized Weyl algebra of Lie type $A_2$ given in \cite[Sec. 2, Ex.~3]{MazTur1999}. A complete presentation for this algebra involving Serre relations of type $A_2$ was given in \cite[Ex.~6.3]{Har2009}.
\end{Example}

\begin{Example}[Crystalline graded rings]
Any noncommutative Kleinian fiber product $\CA_{\al_1,\al_2}(p_1,p_2)$ is non-trivially a \emph{crystalline graded ring} \cite{NauVan2008} (see Corollary \ref{cor:PIDCGR} and Example \ref{ex:PIDCGR}).
\end{Example}

\begin{Example}[Affine Lie algebra of type $A_1^{(1)}$]
\label{ex:affine-A11}
Let $(a_{ij})=\left[\begin{smallmatrix}2&-2\\-2&2\end{smallmatrix}\right]$ and
let $U(\Fg)$ be the enveloping algebra of the Lie algebra generated by $e_i,f_i,h_i$, $i\in\{1,2\}$ subject to
\begin{subequations}
\begin{gather}
[e_i,f_j]=\delta_{ij} h_i,\qquad [h_i,e_j]=a_{ij} e_j,\qquad [h_i,f_j]=-a_{ij} f_j,\\
[e_i,[e_i,[e_i,e_j]]]=0,\qquad [f_i,[f_i,[f_i,f_j]]]=0,\qquad i\neq j.
\end{gather}
\end{subequations}
Let $(\al_1,\al_2)=(-1,1)$, and for $d\in \N$ put $p^d_1(u)=p^d_2(u)=\big(u-\frac{1}{2}\big)\big(u-\frac{1}{2}-d\big)$. Define $\CA^{(d)}=\CA_{\al_1,\al_2}(p^d_1,p^d_2)$.
Then there is a surjective algebra homomorphism
\[\varphi_d:U(\Fg)\to \CA^{(d)},\qquad 
e_i\mapsto X_i^+,\quad f_i\mapsto -X_i^-,\quad h_i\mapsto (-1)^i(H+d+1),\quad i=1,2.\]
This is related to irreducible $d$-dimensional evaluation representations of $\Fg$. See Example \ref{ex:11-example-area-d}.
\end{Example}

\begin{Example}[Finite W-algebra] \label{ex:finite-W}
\cite{DevVan1993,RagSor1998}
Let $\mathcal{W}=\mathcal{W}(\Fsl_4,\Fsl_2\oplus\Fsl_2)$ be the associative algebra with generators $w_2, J^a, S^a$ for $a\in\{+,-,0\}$ with $w_2$ central and
\begin{equation}
[J^a,J^b]=f^{ab}_{\phantom{ab}c}\, J^c\qquad 
[J^a,S^b]=f^{ab}_{\phantom{ab}c}\, S^c\qquad 
[S^a,S^b]=(w_2-c_2) f^{ab}_{\phantom{ab}c}\,J^c
\end{equation}
where $c_2=2(J^0J^0+J^+J^-+J^-J^+)$ is the quadratic Casimir. The structure constants are determined by $f^{+-}_{\phantom{+-}0}=f^{0+}_{\phantom{0+}+}=-f^{0-}_{\phantom{0-}-}=1$. Thus the $J^a$ span $\Fsl_2$ and the $S^a$ transform under the adjoint representation.
Let $\CA^{(d)}=\CA_{-1,1}\big((u-\frac{1}{2})(u-\frac{1}{2}-d),(u-\frac{1}{2})(u-\frac{1}{2}-d)\big)$ 
be the noncommutative Kleinian fiber product from Example \ref{ex:affine-A11}. Assume $d>1$. Then there is a homomorphism
\[\varphi:\mathcal{W}\to\CA^{(d)}\]
given by
\begin{subequations}
\begin{gather}
\varphi(J^\pm)=\frac{\pm 1}{\sqrt{2}}X_1^\pm 
\qquad \varphi(J^0)=-H+\frac{d+1}{2} \\ 
\varphi(S^-)=\frac{1}{\sqrt{2}}X_2^+ \qquad 
\varphi(S^0)=\frac{1}{2}[X_1^+,X_2^+] \qquad 
\varphi(S_+)=\frac{-1}{2\sqrt{2}}[X_1^+,[X_1^+,X_2^+]]\\
\varphi(w_2)=C^2+\frac{1}{2}(d^2-1)
\end{gather}
\end{subequations}
where
\begin{equation}\label{eq:w-ex-C}
C=\frac{1}{d^2-1}\big(X_2^+X_1^+(-2H+d-1)+X_1^+X_2^+(2H-d-3)\big)
\end{equation}
is the unique (up to sign), central element of $\CA^{(d)}$ of degree $(1,1)$ with normalization
\begin{equation}
C^\ast \cdot C= 1.
\end{equation}
Under the homomorphism $\varphi$, the Casimir $c_2$ is mapped to the scalar $\frac{1}{2}(d^2-1)$. 
Moreover, for any $\la\in \C^\times $, this induces a surjective homomorphism $\mathcal{W}\to  \CA^{(d)}/(C-\la)$.
For more details, see Appendix A.
\end{Example}

\begin{Example}[Lie-Heisenberg algebra] \label{ex:Lie-Heisenberg}
Let $\mathcal{H}=U(\Fsl_2\ltimes \Fh_3)$. Here $\Fsl_2=\C e\oplus \C h\oplus \C f$ acts naturally on the $3$-dimensional Heisenberg Lie algebra $\Fh_3$ by identifying $\Fh_3=V_2\oplus V_1$, where $V_2=\C x\oplus\C y$ and $V_1=\C z$ are the two- and one-dimensional $\Fsl_2$-irreps respectively, and $[x,y]=z$, $[z,x]=[z,y]=0$.
Thus $\mathcal{H}$ is the associative algebra with generators $e,f,h,x,y,z$ and defining relations:
\begin{subequations}
\begin{align}
[e,f]&=h & [h,e]&=2e & [h,f]&=-2f\\
[x,y]&=z & [x,z]&=0  & [y,z]&=0\\
[e,x]&=0 & [h,x]&=x  & [f,x]&=y\\
[e,y]&=x & [h,y]&=-y & [f,y]&=0\\
[e,z]&=0 & [h,z]&=0  & [f,z]&=0
\end{align}
\end{subequations}
Let $(\al_1,\al_2)=(-1,2)$ and $p_1(u)=u-\frac{1}{2}$, $p_2(u)=(u-1)u$.
Then there exists a homomorphism $\varphi:\mathcal{H}\to\CA_{\al_1,\al_2}(p_1,p_2)$ determined by:
\begin{subequations}
\begin{align}
\varphi(e)&=\frac{1}{2}X_2^+ &
\varphi(f)&=\frac{-1}{2}X_2^- &
\varphi(h)&=H-\frac{1}{2} \\
\varphi(x)&=\frac{1}{2}[X_2^+, X_1^+] &
\varphi(y)&=X_1^+ &
\varphi(z)&=\frac{1}{2}[[X_2^+,X_1^+],X_1^+]
\end{align}
\end{subequations}
\end{Example}

\subsection{Consistency}

In order to explain our main results and the connection with lattice models, we need to discuss the notion of consistency. For some choices of parameters $\al_1,\al_2,p_1,p_2$, the algebra $\CA_{\al_1,\al_2}(p_1,p_2)$ is the trivial algebra, an obviously undesirable property.
The following result resolves this problem.

\begin{Proposition} \label{prp:NKFP-Consistency}
Let $(\al_1,\al_2)\in\C^2$, $(p_1,p_2)\in(\C[u]\setminus\{0\})^2$, and put $\CA=\CA_{\al_1,\al_2}(p_1,p_2)$.
The following statements are equivalent.
\begin{enumerate}[{\rm (i)}]
\item $\CA\neq \{0\}$;
\item The generator $H$ is algebraically independent over $\C$ in $\tilde{\CA}$;
\item $(p_1,p_2)$ is a solution to the \emph{Mazorchuk-Turowska Equation (MTE)}
\begin{equation}\label{eq:MTeq}
p_1(u+\al_2/2)p_2(u+\al_1/2)=p_1(u-\al_2/2)p_2(u-\al_1/2).
\end{equation}
\end{enumerate}
\end{Proposition}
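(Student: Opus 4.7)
The plan has three parts, one for each implication (i)$\Leftrightarrow$(ii), (ii)$\Rightarrow$(iii), (iii)$\Rightarrow$(ii). The first is tautological given the definition of $\CI$: $H$ is algebraically dependent in $\tilde\CA$ iff $f(H)=0$ in $\tilde\CA$ for some nonzero $f\in\C[u]$, iff every $a\in\tilde\CA$ satisfies $f(H)\cdot a=0$, iff $\CI=\tilde\CA$, iff $\CA=0$.

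For (ii)$\Rightarrow$(iii), I would expand the element $X_1^- X_2^+ X_1^+ X_2^-\in\tilde\CA$ in two ways. Pushing $X_2^+$ to the left via $X_1^- X_2^+=X_2^+ X_1^-$, then applying $X_1^- X_1^+=p_1(H+\al_1/2)$, sliding it past $X_2^-$ using $f(H)X_2^-=X_2^- f(H-\al_2)$, and invoking $X_2^+ X_2^-=p_2(H-\al_2/2)$, I obtain $p_2(H-\al_2/2)\,p_1(H+\al_1/2-\al_2)$. Pushing instead $X_1^+$ to the right via $X_1^+X_2^-=X_2^-X_1^+$ and running the analogous reductions gives $p_1(H+\al_1/2)\,p_2(H+\al_1-\al_2/2)$. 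By (ii), $\C[H]\hookrightarrow\tilde\CA$, so equating these is a genuine polynomial identity in $\C[u]$; the substitution $u=H+(\al_1-\al_2)/2$ turns it into \eqref{eq:MTeq}.

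For (iii)$\Rightarrow$(ii), I would construct a faithful representation on which $H$ acts as an algebraically independent polynomial variable. The natural candidate is the rank-two twisted generalized Weyl algebra $T=T(R,\sigma,t)$ with $R=\C[u]$, $\sigma_i(u)=u-\al_i$, and parameters $t_i=p_i(u-\al_i/2)$. After matching conventions, the MTE is exactly the Mazorchuk-Turowska consistency identity $t_i\,\sigma_i(t_j)=t_j\,\sigma_j(t_i)$, whose content is that $R$ embeds into $T$ (recorded in Section \ref{sec:tgwa}). The assignment $H\mapsto u$, $X_i^\pm\mapsto X_i^\pm$ satisfies all defining relations of $\tilde\CA$ in $T$, producing a homomorphism $\tilde\CA\to T$ under which the algebraic independence of $u$ in $T$ forces that of $H$ in $\tilde\CA$.

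The main obstacle is this last implication. Nothing in the defining relations visibly prevents $\C[H]$ from collapsing, and the substantive content of the MTE is precisely its role as the integrability condition guaranteeing that the naive rank-two TGWA built from the data is nonzero and admits a PBW-type $R$-basis. All remaining steps are routine manipulations with the shift identities $f(H)X_i^\pm=X_i^\pm f(H\pm\al_i)$ and the product relations $X_i^\pm X_i^\mp=p_i(H\mp\al_i/2)$.
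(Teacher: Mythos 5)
Your approach matches the paper's in all three steps, with the (ii)$\Rightarrow$(iii) computation a modest streamlining. A careful remark on each part:

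The (i)$\Leftrightarrow$(ii) chain is correct, and is what the paper does. For (ii)$\Rightarrow$(iii), the paper derives $(p_1(h+\al_2/2)p_2(h+\al_1/2)-p_1(h-\al_2/2)p_2(h-\al_1/2))X_1^+X_2^+=0$ from the sandwich $X_1^+(X_2^+X_1^-)X_1^+=X_1^+(X_1^-X_2^+)X_1^+$ and then multiplies on the right by $X_1^-X_2^-$ to land in $\C[H]$; you instead normal-order the degree-$(0,0)$ element $X_1^-X_2^+X_1^+X_2^-$ in two ways and equate. Your route reaches a polynomial identity in one step and avoids the final domain-cancellation argument, so it is a slight simplification of the same computation; I checked the two normal forms and the substitution $u=H+(\al_1-\al_2)/2$, and they do reproduce \eqref{eq:MTeq} exactly. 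For (iii)$\Rightarrow$(ii), your plan — realize $\tilde\CA$ as a rank-two TGWC over $R=\C[u]$ and invoke the consistency theorem of \cite{FutHar2012a} to conclude $R\hookrightarrow\tilde\CA$ — is precisely what the paper does via Corollary \ref{cor:fiber-TGWA} and Theorem \ref{thm:TGWA-Consistency}.

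One slip worth fixing: with your parameters $t_i=p_i(u-\al_i/2)$, $\si_i(u)=u-\al_i$ (so $X_i^+X_i^-=t_i$), the binary consistency relation is $t_it_j=\si_i(t_j)\,\si_j(t_i)$, not $t_i\si_i(t_j)=t_j\si_j(t_i)$ as you wrote. Substituting and shifting $v=u-(\al_1+\al_2)/2$ turns $t_1t_2=\si_1(t_2)\si_2(t_1)$ into $p_1(v+\al_2/2)p_2(v+\al_1/2)=p_1(v-\al_2/2)p_2(v-\al_1/2)$, which is \eqref{eq:MTeq}; the identity you wrote would instead give $p_1(v+\al_2/2)p_2(v-\al_2/2)=p_1(v-\al_1/2)p_2(v+\al_1/2)$, which is not the MTE. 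This does not affect the validity of your strategy, but the formula should be corrected before it is written up.
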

\begin{proof}
(ii)$\Rightarrow$(i): Obvious.

(i)$\Rightarrow$(ii): If $f(H)=0$ in $\tilde{\CA}$ for some nonzero polynomial $f$ then $1\in\CI$, hence $\CA=\{0\}$.

(ii)$\Leftrightarrow$(iii): Since $\CA$ is an example of a TGWA (see Corollary \ref{cor:fiber-TGWA}), this follows directly from the main theorem of \cite{FutHar2012a}. However, for the convenience of the reader we give a proof of  (ii)$\Rightarrow$(iii). This direction was shown (more generally) by Mazorchuk and Turowska in \cite{MazTur1999}. In $\tilde{\CA}$ we have, using the relations \eqref{eq:Aalbepq-rels},
\[X_1^+(X_2^+X_1^-)X_1^+ = X_1^+(X_1^-X_2^+)X_1^+,\]
\begin{equation}\label{eq:consistency-pf-1a}
X_1^+X_2^+ p_1(H+\al_1/2) = p_1(H-\al_1/2)X_2^+X_1^+.
\end{equation}
Symmetrically, by interchanging the subscripts $1$ and $2$, we have
\begin{equation}\label{eq:consistency-pf-1b}
X_2^+X_1^+ p_2(H+\al_2/2) = p_2(H-\al_2/2)X_1^+X_2^+.
\end{equation}
Combining \eqref{eq:consistency-pf-1a}-\eqref{eq:consistency-pf-1b} we get:
\[p_1(H-\al_1/2)p_2(H-\al_2/2) X_1^+X_2^+ = X_1^+X_2^+ p_1(H+\al_1/2)p_2(H+\al_2/2).\]
Put $h=H-(\al_1+\al_2)/2$ and use $X_i^+H=(H-\al_i)X_i^+$:
\[ \big(p_1(h+\al_2/2)p_2(h+\al_1/2) - p_1(h-\al_2/2)p_2(h-\al_1/2)\big)\cdot X_1^+X_2^+ = 0.\]
Multiplying from the right by $X_1^-X_2^-$ we obtain
\[ \big(p_1(h+\al_2/2)p_2(h+\al_1/2) - p_1(h-\al_2/2)p_2(h-\al_1/2)\big)\cdot p_1(H-\al_1/2)p_2(H-\al_2/2) = 0.\]
Thus, if $H$ is algebraically independent over $\C$ in $\tilde{\CA}$, then \eqref{eq:MTeq} holds identically in the polynomial ring $\C[u]$.
\end{proof}

\begin{Assumption}
In the rest of this paper, whenever we consider a noncommutative Kleinian fiber product $\CA_{\al_1,\al_2}(p_1,p_2)$, we implicitly assume that \eqref{eq:MTeq} holds.
\end{Assumption}

\subsection{Classification of solutions}

Before embarking on an investigation of the algebras $\CA_{\al_1,\al_2}(p_1,p_2)$ it is important to understand the solutions $(p_1,p_2)$ to the MTE \eqref{eq:MTeq}. This problem  has a very satisfying answer, and this is where the connection to lattice models comes in. In \cite{HarRos2016}, the author and D. Rosso obtained a complete classification of all solutions in terms of generalized Dyck paths. It is well-known that such paths are in bijection with six-vertex or higher spin vertex configurations appearing in statistical mechanics \cite{Bax2007,GomRuiSie2005,Zin2009}.
In this subsection we slightly reformulate the classification in terms of those. 

Consider the two-dimensional face-centered unit square lattice (Figure \ref{fig:lattice}) and put:
\begin{alignat*}{2}
\mathsf{F}&=\Z^2 &&\qquad\text{midpoints of faces, marked \tikz[baseline=-.5ex]{\draw (-2pt,-2pt)--(2pt,2pt);\draw(2pt,-2pt)--(-2pt,2pt);}}\\
\mathsf{V}&=\mathsf{F}+(1/2,1/2) &&\qquad\text{vertices, marked \tikz[baseline=-.5ex]{\fill (0,0) circle (2pt);}}\\
\mathsf{E_1}&=\mathsf{F}+(1/2,0) &&\qquad\text{midpoints of vertical edges}\\
\mathsf{E_2}&=\mathsf{F}+(0,1/2) &&\qquad\text{midpoints of horizontal edges}\\
\mathsf{E} &=\mathsf{E_1}\cup \mathsf{E_2} &&
\end{alignat*}

\begin{figure}
\centering 
\begin{tikzpicture}
\foreach \x in {-2.5,-1.5,...,2.5}{
 \draw[help lines] (\x,-3) to (\x,3);
 \draw[help lines] (-3,\x) to (3,\x); }
\foreach \x in {-2.5,-1.5,...,2.5}{
 \foreach \y in {-2.5,-1.5,...,2.5}{
  \fill (\x,\y) circle (2pt); } }
\foreach \x in {-2,...,2}{
 \foreach \y in {-2,...,2}{
  \draw (\x cm-2pt,\y cm -2pt)--(\x cm+2pt,\y cm + 2pt);
  \draw (\x cm+2pt,\y cm -2pt)--(\x cm-2pt,\y cm + 2pt); } }
\node[font=\scriptsize, below] at (0,0) {$(0,0)$};
\end{tikzpicture}
\caption{The two-dimensional face-centered unit square lattice.}
\label{fig:lattice}
\end{figure}

\begin{Definition} Let $(m,n)\in\Z^2$. An \emph{$(m,n)$-periodic higher spin vertex configuration} $\mathscr{L}$ is a function $\mathscr{L}: \mathsf{E} \to \N$ assigning a non-negative integer label $\mathscr{L}(e)$ to each edge $e\in \mathsf{E}$, such that
\begin{enumerate}[{\rm (i)}]
\item (\emph{periodicity}) for every $e\in \mathsf{E}$ we have $\mathscr{L}\big(e+(m,n)\big)=\mathscr{L}(e)$;
\item (\emph{finiteness}) 
for every $e\in \mathsf{E}$ we have $\mathscr{L}\big(e+k(-n,m)\big)=0$ for $|k|\gg 0$;
\item (\emph{local current conservation}) for every $v\in \mathsf{V}$ we have (see Figure \ref{fig:vertex}):
\begin{equation}\label{eq:current-conservation}
 \mathscr{L}\big(v-(1/2,0)\big)+\mathscr{L}\big(v-(0,1/2)\big) = \mathscr{L}\big(v+(1/2,0)\big)+\mathscr{L}\big(v+(0,1/2)\big).
\end{equation}
\end{enumerate}
We call $\mathscr{L}$ a \emph{six-vertex configuration} if $\mathscr{L}(e)\in\{0,1\}$ for all $e\in \mathsf{E}$, and  \emph{trivial} if $\mathscr{L}(e)=0$ for all $e\in \mathsf{E}$.
\end{Definition}

\begin{figure}
\centering 
\[
\begin{tikzpicture}[baseline={([yshift=-.5ex]current bounding box.center)}]
\draw (-1.6,0)--(1.6,0);
\draw[->] (-1.6,0)--(-.8,0);
\draw[->] (0,0)--(.8,0);
\draw ( 0,-1.6)--( 0, 1.6);
\draw[->] (0,-1.6)--(0,-.8);
\draw[->] (0,0)--(0,.8);
\fill (0,0) circle (2pt);
\node[font=\scriptsize, anchor=north west] at (0,0) {$v$};
\node[font=\scriptsize, anchor=south] at (-.8,0) {$a$};
\node[font=\scriptsize, anchor=south] at ( .8,0) {$b$};
\node[font=\scriptsize, anchor=east] at (0, .8) {$c$};
\node[font=\scriptsize, anchor=east] at (0,-.8) {$d$};
\end{tikzpicture}
\qquad a+d=b+c
\]
\caption{Local current conservation (ice rule).}\label{fig:vertex}
\end{figure}
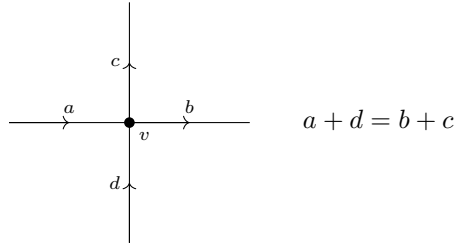

Such a configuration $\mathscr{L}$ can be viewed as a multiset of non-crossing vertex paths of period $(m,n)$, see Figure \ref{fig:53-example-a}. It is natural to think of $\mathscr{L}(e)$ as the ``multiplicity'' of $e$. For example, $\mathscr{L}(e)=0$ then means $e$ is absent from the configuration, while $\mathscr{L}(e)=2$ means $e$ is a double edge.

To each configuration $\mathscr{L}$ one can attach a solution $(P^\mathscr{L}_1,P^\mathscr{L}_2)$ to the MTE \eqref{eq:MTeq} as follows.
Let $\Ga_{m,n}=\langle(m,n)\rangle$ be the infinite cyclic subgroup of the additive group $\R^2$ with generator $(m,n)$. The group $\Ga_{m,n}$ acts by translations on the sets $\mathsf{E}_i$.
For $(\al_1,\al_2)\in\C^2$ with $m\al_1+n\al_2=0$ and an $(m,n)$-periodic higher spin vertex configuration $\mathscr{L}$, put for $i=1,2$:
\begin{equation}\label{eq:fundamental-solution}
P^{\mathscr{L}}_i(u)=P^{\mathscr{L}}_i(u;\al_1,\al_2)=\prod_{(x_1,x_2)+\Ga_{m,n}\in \mathsf{E}_i/\Ga_{m,n}} \big(u-(x_1\al_1+x_2\al_2)\big)^{\mathscr{L}(x_1,x_2)}.
\end{equation}
By periodicity of $\mathscr{L}$ and that $m\al_1+n\al_2=0$ the expression is independent of the choice of representatives $(x_1,x_2)$ modulo $\Ga_{m,n}$. By finiteness, $P^\mathscr{L}_i(u)$ are polynomials in $u$. Using the local current conservation \eqref{eq:current-conservation} one verifies that $(P^\mathscr{L}_1,P^\mathscr{L}_2)$ solves the MTE \eqref{eq:MTeq}.

\begin{Example}
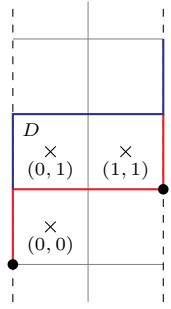
\begin{figure}
\centering
\begin{tikzpicture}[xscale=1,yscale=1]
\draw[help lines] (1,-.5)--(1,3.5);
\draw[help lines] (0,0)--(2,0);
\draw[help lines] (0,1)--(2,1);
\draw[help lines] (0,2)--(2,2);
\draw[help lines] (0,3)--(2,3);
\draw[dashed] (0,-.5)--(0,3.5);
\draw[dashed] (2,-.5)--(2,3.5);
\draw[-,thick,Red] (0,0)--(0,1)--(1,1)--(2,1)--(2,2);
\draw[-,thick,Blue] (0,1)--(0,2)--(1,2)--(2,2)--(2,3);
\fill (0,0) circle (2pt);
\fill (2,1) circle (2pt);
\draw (.5cm-2pt,.5cm-2pt)--(.5cm+2pt,.5cm+2pt);
\draw (.5cm+2pt,.5cm-2pt)--(.5cm-2pt,.5cm+2pt);
\draw (.5cm-2pt,1.5cm-2pt)--(.5cm+2pt,1.5cm+2pt);
\draw (.5cm+2pt,1.5cm-2pt)--(.5cm-2pt,1.5cm+2pt);
\draw (1.5cm-2pt,1.5cm-2pt)--(1.5cm+2pt,1.5cm+2pt);
\draw (1.5cm+2pt,1.5cm-2pt)--(1.5cm-2pt,1.5cm+2pt);
\node[font=\scriptsize, below] at (.5,.5) {$(0,0)$};
\node[font=\scriptsize, below] at (.5,1.5) {$(0,1)$};
\node[font=\scriptsize, below] at (1.5,1.5) {$(1,1)$};
\node[font=\scriptsize, below right] at (0,2) {$D$};
\end{tikzpicture}
\caption{A $(2,1)$-periodic six-vertex configuration drawn in the strip $[\frac{-1}{2},\frac{-1}{2}+2]\times\R$, the closure of a fundamental domain with respect to translations by the vector $(2,1)$.}
\label{fig:21-config}
\end{figure}
Let $(m,n)=(2,1)$ and $\mathscr{L}$ be the $(2,1)$-periodic six-vertex configuration in Figure \ref{fig:21-config}.
Put $(\al_1,\al_2)=(-1,2)$. Up to translations by $(2,1)$, there are exactly two vertical edges in $\mathscr{L}$, each of multiplicity one, with midpoints $\textcolor{Red}{(-\frac{1}{2},0)}$ and $\textcolor{Blue}{(-\frac{1}{2},1)}$ respectively. Thus
\begin{align*}
P^\mathscr{L}_1(u)&=\textcolor{Red}{\big(u-(-\tfrac{1}{2}\al_1)\big)}\textcolor{Blue}{\big(u-(-\tfrac{1}{2}\al_1+\al_2)\big)} \\ 
&= (u-\tfrac{1}{2})(u-\tfrac{5}{2}).
\end{align*}
Similarly, there are four horizontal edges in $\mathscr{L}$, all of multiplicity one, with midpoints $\textcolor{Red}{(0,\frac{1}{2}), (1,\frac{1}{2})}$, $\textcolor{Blue}{(0,\frac{3}{2}), (1,\frac{3}{2})}$ respectively. Thus
\begin{align*}
P^\mathscr{L}_2(u) &=\textcolor{Red}{\big(u-\tfrac{1}{2}\al_2\big)\big(u-(\al_1+\tfrac{1}{2}\al_2)\big)}\textcolor{Blue}{\big(u-\tfrac{3}{2}\al_2\big)\big(u-(\al_1+\tfrac{3}{2}\al_2)\big)} \\
&=(u-1)u(u-3)(u-2).
\end{align*}
One can directly verify that \eqref{eq:MTeq} holds.
\end{Example}

Conversely, any solution is uniquely a product of shifts of those, in the following way.

\begin{Theorem}[Reformulation of the main result of \cite{HarRos2016}]\label{thm:HarRos-classification}
Fix $(\al_1,\al_2)\in\C^2\setminus\{(0,0)\}$.
\begin{enumerate}[{\rm (a)}]
\item Let $(p_1,p_2)\in(\C[u]\setminus\{0\})^2$ be any solution to the MTE \eqref{eq:MTeq} where $p_1$ and $p_2$ are monic, and not both constant. Then there exist a unique pair $(m,n)$ of relatively prime non-negative integers with $m\al_1+n\al_2=0$,
a unique non-negative integer $k$, complex numbers $\la_1,\la_2,\ldots,\la_k$ pairwise incongruent modulo $\Z\al_1+\Z\al_2$, and non-trivial $(m,n)$-periodic higher spin vertex configurations $\mathscr{L}^{(1)},\mathscr{L}^{(2)},\ldots,\mathscr{L}^{(k)}$ such that for $i=1,2$:
\begin{equation} \label{eq:HR-factorization}
p_i(u) = P^{\mathscr{L}^{(1)}}_i(u-\la_1) P^{\mathscr{L}^{(2)}}_i(u-\la_2)\cdots P^{\mathscr{L}^{(k)}}_i(u-\la_k).
\end{equation}
\item The set $\big\{[\mathscr{L}^{(i)},\la_i]\mid i=1,2,\ldots,k\big\}$ is uniquely determined by $(p_1,p_2)$
where $[\mathscr{L},\la]$ is the orbit containing $(\mathscr{L},\la)$ under the action of the group $\Z^2$ given by
 $\mu \cdot (\mathscr{L},\la)=(\mathscr{L}^\mu,\la + \mu_1\al_1+\mu_2\al_2)$, $\mathscr{L}^\mu(e)=\mathscr{L}(e+\mu)$ for $e\in \mathsf{E}$, $\mu=(\mu_1,\mu_2)\in\Z^2$.
\end{enumerate}
\end{Theorem}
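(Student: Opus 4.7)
The plan is to derive this classification from the main result of \cite{HarRos2016} via a bijection between $(m,n)$-periodic higher spin vertex configurations and multisets of the generalized Dyck paths classified there. The first step is to establish this bijection precisely. Local current conservation \eqref{eq:current-conservation} is exactly the condition allowing one to decompose a configuration $\mathscr{L}$ into a multiset of non-crossing monotone northeast strands, treating each edge of multiplicity $k$ as $k$ parallel pieces; at each vertex, the ice rule permits a consistent pairing of incoming with outgoing strands. Combined with $(m,n)$-periodicity and finiteness, this yields a finite family of periodic strand-equivalence classes, each corresponding to a single generalized Dyck path modulo $\Ga_{m,n}$-translation.

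Under this bijection, the polynomial $P^\mathscr{L}_i(u)$ of \eqref{eq:fundamental-solution} groups roots according to edge positions modulo $\Ga_{m,n}$, with $\mathscr{L}(x_1, x_2)$ equal to the number of strands crossing that edge in a fundamental domain. This matches the polynomial assignment used in \cite{HarRos2016}, so verifying that $(P^\mathscr{L}_1, P^\mathscr{L}_2)$ solves the MTE reduces to a short direct check using \eqref{eq:current-conservation} at each vertex.

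For existence (part (a)), applying the classification of \cite{HarRos2016} to $(p_1, p_2)$ yields a factorization into shifted indecomposable solutions of the form $\big(P^{\mathscr{L}^{(i)}}_1(u - \la_i),\ P^{\mathscr{L}^{(i)}}_2(u - \la_i)\big)$. The relative primality of $(m, n)$ corresponds to each $\mathscr{L}^{(i)}$ having its minimal period, and the pairwise incongruence of the $\la_i$ modulo $\Z\al_1 + \Z\al_2$ reflects that the blocks occupy distinct cosets --- if two $\la_i$ were congruent, the corresponding factors would merge into a single larger configuration on the same cylinder, contradicting indecomposability.

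For uniqueness (part (b)), the roots of $p_1 p_2$ partition canonically into cosets of $\Z\al_1 + \Z\al_2$, a partition intrinsic to $(p_1, p_2)$. Within each coset, fixing a base point $\la_i$ lets one recover $\mathscr{L}^{(i)}$ by reading off multiplicities at the edge midpoints; shifting $\la_i$ by an element of $\Z\al_1 + \Z\al_2$ translates the configuration by the corresponding element of $\Z^2$, which is precisely the described orbit action. The principal obstacle I anticipate is showing injectivity of the assignment $[\mathscr{L}, \la] \mapsto \big(P^{\mathscr{L}}_1(u - \la),\ P^{\mathscr{L}}_2(u - \la)\big)$, i.e.\ that different orbits produce different polynomial pairs. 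Granted the bijection with Dyck paths, this reduces to the uniqueness statement in \cite{HarRos2016}, but making the bijection and the $\Z^2$-actions correspond on the nose is the technical core of the argument.
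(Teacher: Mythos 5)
The paper gives no proof of this theorem: it states it outright as a reformulation of the main theorem of \cite{HarRos2016}, citing only the classical bijection between generalized Dyck paths and higher-spin vertex configurations, and your proposal spells out precisely that bijection and translation, so it follows the intended route, including correctly flagging the orbit-matching as the delicate technical point. One small imprecision: the relative primality of $(m,n)$ is not a minimality property of the configurations $\mathscr{L}^{(i)}$ --- the pair $(m,n)$ is uniquely determined by $(\al_1,\al_2)$ alone as soon as $\Z\al_1+\Z\al_2$ has rank one, and that rank-one condition is itself forced by the existence of a non-constant MTE solution, which is part of what \cite{HarRos2016} establishes.
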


\section{Main results}\label{sec:main-results}

In this section we state our three main theorems.

\subsection{Reduction to integral weight modules over $\CA(\mathscr{L})$}
\label{sec:thmA}
The first theorem essentially shows that the identity \eqref{eq:HR-factorization} can be lifted to the level of weight module categories. 

Let $(m,n)$ be a pair of relatively prime non-negative integers, and $\mathscr{L}$ be an $(m,n)$-periodic higher spin vertex configuration. Define 
\begin{equation}\label{eq:AL-def}
\CA(\mathscr{L})=\CA_{-n,m}\big(P^\mathscr{L}_1(u;-n,m), P^\mathscr{L}_2(u;-n,m)\big),
\end{equation}
where $P^\mathscr{L}_i(u)$ were defined in \eqref{eq:fundamental-solution}.

\begin{Definition}
A module $M$ over a noncommutative Kleinian fiber product $\CA_{\al_1,\al_2}(p_1,p_2)$ is called a \emph{weight module} if 
\[M=\bigoplus_{\la\in\C} M_\la,\qquad M_\la=\{v\in M\mid Hv=\la v\}.\]
The \emph{support} of $M$ is $\Supp(M)=\{\la\in\C\mid M_\la\neq 0\}$. 
Let $\mathscr{W}_{\al_1,\al_2}(p_1,p_2)$ denote the category of weight modules over $\CA_{\al_1,\al_2}(p_1,p_2)$.
A weight module $M$ over $\CA(\mathscr{L})$ is called \emph{integral} if $\Supp(M)\subseteq \Z$.
Let $\mathscr{W}(\mathscr{L})_\Z$ denote the category of integral weight modules over $\CA(\mathscr{L})$.
\end{Definition}

The first main result reduces the problem of classifying simple weight modules over $\CA_{\al_1,\al_2}(p_1,p_2)$ to that of classifying simple integral weight modules over $\CA(\mathscr{L})$.

\begin{MainTheorem}\label{thm:A}
Let $\CA_{\al_1,\al_2}(p_1,p_2)$ be any non-trivial noncommutative Kleinian fiber product where $(\al_1,\al_2)\in\C^2\setminus\{(0,0)\}$. Then there exist a pair $(m,n)$ of relatively prime non-negative integers with $m\al_1+n\al_2=0$ and a sequence of $(m,n)$-periodic higher spin vertex configurations $\mathscr{L}^{(\om)}$ indexed by $\om\in\C/\Z$, at most finitely many non-trivial, such that there is an equivalence of categories
\begin{equation}
\mathscr{W}_{\al_1,\al_2}(p_1,p_2)\simeq \prod_{\om\in\C/\Z} \mathscr{W}(\mathscr{L}^{(\om)})_\Z.
\end{equation}
\end{MainTheorem}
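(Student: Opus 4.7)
My plan is to prove Theorem~\ref{thm:A} in three stages: a coarse decomposition of the category along cosets of $\Ga:=\Z\al_1+\Z\al_2\subset\C$, a normalization to the reduced parameter $(\al_1,\al_2)=(-n,m)$ combined with an application of Theorem~\ref{thm:HarRos-classification}, and finally a coset-wise equivalence $\mathscr{W}^{(\om)}\simeq\mathscr{W}(\mathscr{L}^{(\om)})_\Z$.

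First, since $X_i^{\pm}$ shifts $H$-eigenvalues by $\pm\al_i$, the support of any weight module is a disjoint union of $\Ga$-cosets, so the category decomposes as $\mathscr{W}_{\al_1,\al_2}(p_1,p_2)\simeq\prod_{\om\in\C/\Ga}\mathscr{W}^{(\om)}$, where $\mathscr{W}^{(\om)}$ is the full subcategory of modules whose support lies in $\om$. Because $(\al_1,\al_2)\neq(0,0)$, Theorem~\ref{thm:HarRos-classification} supplies unique coprime non-negative integers $m,n$ with $m\al_1+n\al_2=0$; equivalently, $\Ga=c\Z$ and $(\al_1,\al_2)=(-nc,mc)$ for some $c\in\C^{\times}$. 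The rescaling $H\mapsto H/c$ induces an isomorphism $\CA_{\al_1,\al_2}(p_1,p_2)\simeq\CA_{-n,m}(\tilde p_1,\tilde p_2)$ with $\tilde p_i(u):=p_i(cu)$, matching $\C/\Ga$ with $\C/\Z$ at the level of weights. Theorem~\ref{thm:HarRos-classification} now factors $\tilde p_i(u)=\prod_{j=1}^{k}P^{\mathscr{L}^{(j)}}_i(u-\la_j)$ with the $\la_j$ pairwise incongruent modulo $\Z$; I then set $\mathscr{L}^{(\om)}:=\mathscr{L}^{(j)}$ if $\la_j\in\om$ (well-defined up to the $\Z^2$-action of Theorem~\ref{thm:HarRos-classification}(b)) and the trivial configuration otherwise, so at most $k$ of these are non-trivial.

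The main content is to construct, for each $\om\in\C/\Z$, an equivalence $\mathscr{W}^{(\om)}_{-n,m}(\tilde p_1,\tilde p_2)\simeq\mathscr{W}(\mathscr{L}^{(\om)})_\Z$. Picking a representative $\la_0\in\om$, I factor $\tilde p_i=P^{\mathscr{L}^{(\om)}}_i(u-\la_0)\cdot q_i$; by pairwise $\Z$-incongruence of the $\la_j$, the polynomial $q_i(H\pm\al_i/2)$ acts invertibly on any $M\in\mathscr{W}^{(\om)}$. The candidate functor $F$ sends $M$ to itself with $H$-action shifted by $-\la_0\,\Id$ (so that $\Supp F(M)\subseteq\Z$) and with each $X_i^{\pm}$ rescaled by a nowhere-zero scalar function $a_i^{\pm}(\la)$ of the weight $\la$, chosen so that
\[a_i^{+}(\la)\,a_i^{-}(\la+\al_i)=q_i(\la+\al_i/2)^{-1}.\]
This product constraint precisely forces the ``vertical'' TGWA relations $X_i'^{+}X_i'^{-}=P^{\mathscr{L}^{(\om)}}_i(H'-\al_i/2)$ and its partner defining $\CA(\mathscr{L}^{(\om)})$. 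The main obstacle is verifying that the rescaled generators still satisfy the mixed commutations $X_1^{\pm}X_2^{\mp}=X_2^{\mp}X_1^{\pm}$: this reduces to a two-dimensional cocycle equation for the $a_i^{\pm}$ on the lattice of weights in $\om$, and I expect its solvability to follow directly from the fact that $(q_1,q_2)$ itself solves the MTE---in lattice-model language, from the ice rule applied to the ``complementary'' vertex configuration built out of the remaining factors. The inverse functor reverses the rescaling and the shift, naturality is formal, and splicing the three stages produces the product decomposition stated in the theorem.
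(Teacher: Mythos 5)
Your strategy mirrors the paper's at the level of the overall architecture — normalize $(\al_1,\al_2)$ by an affine transformation so that $\Z\al_1+\Z\al_2=\Z$, decompose the weight category over cosets $\om\in\C/\Z$, and then absorb the factors of $p_i$ from the ``other'' cosets by a rescaling argument whose compatibility hinges on those residual factors themselves solving the MTE. The implementation differs. The paper works at the algebra level: it passes to the semilocal ring $R_\om=\{f/g\in\C(u)\mid g\text{ has no zeros in }\om\}$, notes that the residual factors $\tilde p_i$ become units there, and then invokes Proposition~\ref{prp:rank-2-isomorphisms}, which for rank two TGWAs says one may rescale $(t_1,t_2)$ by central units $(s_1,s_2)$ solving the binary consistency relation~\eqref{eq:TGWA-Consistency-Rel1} (which, over $R=\C[u]$ with $\si_i(u)=u-\al_i$, is literally the MTE) without changing the TGWA up to graded $R$-ring isomorphism. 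A translation isomorphism and localization back then produce the coset-wise equivalence. You instead work at the module level, sending $M$ to a copy of itself with shifted $H$-action and with $X_i^\pm$ rescaled by weight-dependent scalars $a_i^\pm(\la)$ subject to the product constraint you wrote and the cocycle condition $a_1^+(\la-\al_2)a_2^-(\la)=a_2^-(\la+\al_1)a_1^+(\la)$ coming from $[X_1^+,X_2^-]=0$. Your expectation that this is solvable because $(q_1,q_2)$ solves the MTE is indeed correct and should be made explicit: the asymmetric choice $a_1^+(\la)=q_1(\la+\al_1/2)^{-1}$, $a_1^-\equiv 1$, $a_2^+\equiv 1$, $a_2^-(\la)=q_2(\la-\al_2/2)^{-1}$ satisfies the two product constraints trivially, and the cocycle condition reduces (after the substitution $\nu=\la+(\al_1-\al_2)/2$) to $q_1(\nu-\al_2/2)q_2(\nu-\al_1/2)=q_1(\nu+\al_2/2)q_2(\nu+\al_1/2)$, which is exactly the MTE for $(q_1,q_2)$. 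This is precisely the same asymmetric placement used in the proof of Proposition~\ref{prp:rank-2-isomorphisms}, so in effect you have rediscovered that proposition in the module setting. Your route avoids localization entirely and is arguably more elementary; the paper's route is shorter because it packages the cocycle computation once and for all as a general TGWA statement (and it also establishes the stronger claim that the localized \emph{algebras} are isomorphic, not only their weight categories). The one place you should tighten is the well-definedness of $\mathscr{L}^{(\om)}$: different representatives of the $\Z^2$-orbit $[\mathscr{L}^{(j)},\la_j]$ give translates $\mathscr{L}^{(j),\mu}$, so $\CA(\mathscr{L}^{(\om)})$ is only determined up to a translation isomorphism, and you should say explicitly that this induces an equivalence of the integral weight categories so the statement is independent of the choice.
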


The proof will be given in Section \ref{sec:ProofA}.

\subsection{Classification of simple integral weight $\CA(\mathscr{L})$-modules}
\label{sec:thmB}

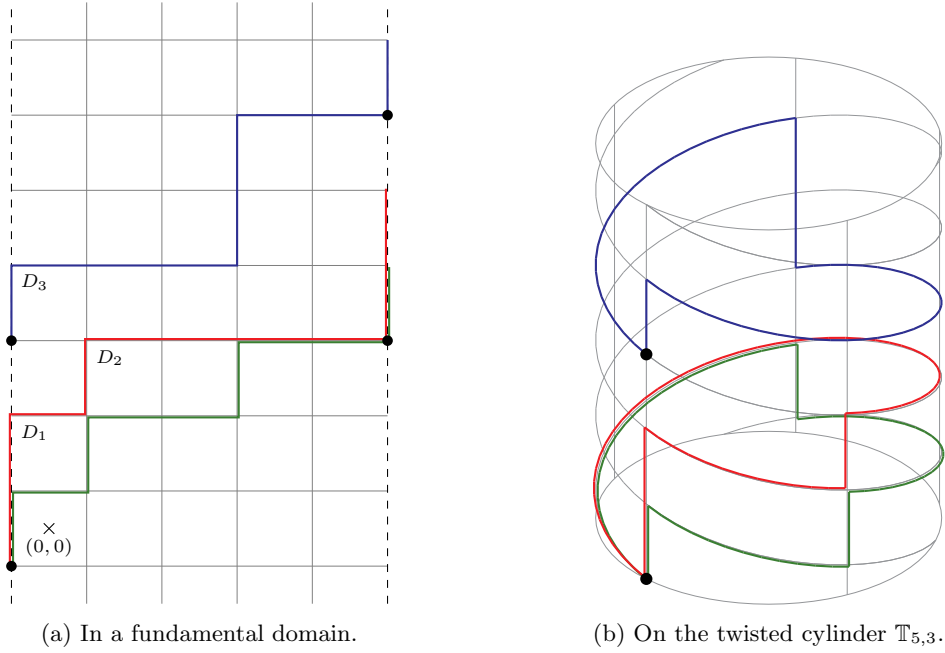
\begin{figure}
\centering
\begin{subfigure}[b]{.4\textwidth}
\centering 
\begin{tikzpicture}
\foreach \y in {0,1,...,7} { \draw[help lines] (0,\y)--(5,\y); }
\foreach \x in {1,...,4} { \draw[help lines] (\x,-.5)--(\x,7.5); }
\draw[dashed] (0,-.5) to (0,7.5);
\draw[dashed] (5,-.5) to (5,7.5);
\draw[thick,OliveGreen,xshift=.6pt,yshift=-.6pt] (0,0)--(0,1)--(1,1)--(1,2)--(2,2)--(3,2)--(3,3)--(4,3)--(5,3)--(5,4);
\draw[thick,Red,xshift=-.6pt,yshift=.6pt] (0,0)--(0,1)--(0,2)--(1,2)--(1,3)--(2,3)--(3,3)--(4,3)--(5,3)--(5,5);
\draw[thick,Blue] (0,3)--(0,4)--(1,4)--(2,4)--(3,4)--(3,5)--(3,6)--(4,6)--(5,6)--(5,7);
\fill (0,0) circle (2pt);
\fill (5,3) circle (2pt);
\fill (0,3) circle (2pt);
\fill (5,6) circle (2pt);
\draw (.5cm-2pt,.5cm-2pt)--(.5cm+2pt,.5cm+2pt);
\draw (.5cm+2pt,.5cm-2pt)--(.5cm-2pt,.5cm+2pt);
\node[font=\scriptsize, below] at (.5,.5) {$(0,0)$};
\node[font=\scriptsize, below right] at (0,2) {$D_1$}; 
\node[font=\scriptsize, below right] at (1,3) {$D_2$}; 
\node[font=\scriptsize, below right] at (0,4) {$D_3$}; 
\end{tikzpicture}
\caption{In a fundamental domain.}
\label{fig:53-example-a}
\end{subfigure}
\qquad\qquad
\begin{subfigure}[b]{.4\textwidth}
\centering
\begin{tikzpicture}[x={(-.7071067812cm,-.35355339059327373cm)},
                    y={(.7071067812cm,-.35355339059327373cm)},
                    z={(0cm, .8660254037844387cm)},
                    xscale=1.15, yscale=1.15]
\setupcylinder{5}{3}{5} 
\drawlatticepath{0}{0}{1,0,1,0,0,1,0,0}{OliveGreen,thick,,xshift=.6pt,yshift=-.6pt}
\drawlatticepath{0}{0}{1,1,0,1,0,0,0,0}{Red,thick,xshift=-.6pt,yshift=.6pt}
\drawlatticepath{0}{3}{1,0,0,0,1,1,0,0}{Blue,thick}
\drawlatticedot{0}{0}
\drawlatticedot{0}{3}
\end{tikzpicture}
\caption{On the twisted cylinder $\T_{5,3}$.}
\label{fig:53-example-b}
\end{subfigure}
\caption{A $(5,3)$-periodic higher spin vertex configuration $\mathscr{L}$.}
\label{fig:53-example}
\end{figure}

Let $(m,n)$ be a pair of relatively prime non-negative integers and $\mathscr{L}$ be an $(m,n)$-periodic higher spin vertex configuration.
For $i\in\{1,2\}$ and $e=(x_1,x_2)\in \mathsf{E}_i$, recalling that $e$ is by definition the midpoint of an edge, let $[e]$ be the corresponding closed line segment in $\R^2$:
\[ [e] = \big\{(a_1,a_2)\in\R^2\mid a_i=x_i\text{ and } x_{3-i}-\tfrac{1}{2}\le a_{3-i}\le x_{3-i}+\tfrac{1}{2} \big\}.\]
Let $[\mathscr{L}]\subseteq \R^2$ denote the union of the edges that appear in $\mathscr{L}$:
\[ [\mathscr{L}]=\bigcup_{\substack{e\in \mathsf{E}\\ \mathscr{L}(e)>0}} [e]. \]
Consider the twisted cylinder $\T_{m,n}=\R^2/\Ga_{m,n}$ where $\Ga_{m,n}=\langle(m,n)\rangle$ with quotient topology making $\T_{m,n}$ homeomorphic to $S_1\times\R$.
Let $\overline{\mathscr{L}}\subseteq \T_{m,n}$ be the image of $[\mathscr{L}]$ under the canonical projection $\R^2\to\T_{m,n}$ (see Figure \ref{fig:53-example-b}). Removing the set $\overline{\mathscr{L}}$ from $\T_{m,n}$ yields a ``crackle cylinder'' $\T_{m,n}\setminus\overline{\mathscr{L}}$ in which every connected component is either contractible or homotopic to a circle $S_1\times \ast$. 
Finally, let $\overline{\mathsf{F}}$ be the image of the set of face midpoints $\mathsf{F}=\Z^2$ under the canonical projection $\R^2\to\T_{m,n}$.

\begin{MainTheorem}\label{thm:B}
Let $(m,n)$ be a pair of relatively prime non-negative integers, $\mathscr{L}$ be an $(m,n)$-periodic higher spin vertex configuration, and $\CA=\CA(\mathscr{L})$ be the corresponding noncommutative Kleinian fiber product \eqref{eq:AL-def}. 
\begin{enumerate}[{\rm (i)}]
\item There is a bijective correspondence between the set of  isoclasses of simple integral weight $\CA$-modules, and the set of pairs $(D,\xi)$ where $D$ is a connected component of $\T_{m,n}\setminus\overline{\mathscr{L}}$ and $\xi\in\C$ with $\xi=0$ iff $D$ is contractible.
\item Let $M(D,\xi)$ be the module corresponding to $(D,\xi)$. Each nonzero weight space $M(D,\xi)_\la$ is one-dimensional and 
\[\Supp\big(M(D,\xi)\big)=\{x_1\al_1+x_2\al_2\mid (x_1,x_2)+\Ga_{m,n}\in \overline{\mathsf{F}}\cap D\}.\]
In particular, $\dim M(D,\xi)=\mathrm{area}(D)$.
\item $\xi$ can be chosen such that
$C|_{M(D,\xi)}=\xi\Id_{M(D,\xi)}$ where $C\in\CA\otimes_{\C[H]}\C(H)$ is a certain $\CA$-centralizing element \eqref{eq:C} acting on any $M(D,\xi)$ with incontractible $D$.
\item Any finite-dimensional simple $\CA$-module is an integral weight module.
\end{enumerate}
\end{MainTheorem}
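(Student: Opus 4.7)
The plan is to exploit the structure of $\CA(\mathscr{L})$ as a rank-two twisted generalized Weyl algebra (TGWA) with base ring $R=\C[H]$, automorphisms $\sigma_1(H)=H+n$ and $\sigma_2(H)=H-m$, and defining polynomials $p_i=P_i^\mathscr{L}(u;-n,m)$. Under the identification $(x_1,x_2)+\Gamma_{m,n}\leftrightarrow -x_1n+x_2m$, the face-orbit set $\overline{\mathsf{F}}$ maps onto the integral weight lattice $\Z\alpha_1+\Z\alpha_2=\Z$, and the roots with multiplicities of $p_i(H\pm\alpha_i/2)$ at a weight $\lambda$ record exactly the $\mathsf{E}_i$-edges of $\mathscr{L}$ adjacent to the face of weight $\lambda$. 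In particular the relations $X_i^+X_i^-=p_i(H-\alpha_i/2)$ and $X_i^-X_i^+=p_i(H+\alpha_i/2)$ force transitions across $\mathscr{L}$-edges to vanish in any weight module.

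For parts (ii)--(iii) I would construct $M(D,\xi)$ directly. Fix a connected component $D$ of $\T_{m,n}\setminus\overline{\mathscr{L}}$, let $V(D)=\overline{\mathsf{F}}\cap D$, and put $M(D,\xi)=\bigoplus_{v\in V(D)}\C\cdot m_v$ with $Hm_v=\lambda_v m_v$ where $\lambda_v=-v_1n+v_2m$. Declare $X_1^+m_v$ to be a scalar multiple of $m_{v+(1,0)}$ when $v+(1,0)\in V(D)$ and $0$ otherwise, and analogously for $X_2^+$ (direction $(0,1)$) and for the $X_i^-$ (opposite directions). The TGWA relations above, together with the MTE for $(p_1,p_2)$, fix these scalars uniquely up to a diagonal gauge rescaling of the $m_v$. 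Contractibility of $D$ rigidifies the gauge entirely, giving a single module $M(D,0)$; non-contractibility leaves a $\C^\times$-worth of choices parametrized by the holonomy around the generator of $\pi_1(D)\cong\Z$, and this holonomy coincides with the eigenvalue $\xi$ of the centralizing element $C$ of \eqref{eq:C}. Simplicity is automatic since $V(D)$ is connected via non-$\mathscr{L}$ edges, and the claims on support, one-dimensional weight spaces, and $\dim M(D,\xi)=\mathrm{area}(D)$ follow by construction.

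For the converse in (i), let $M$ be any simple integral weight module. A TGWA argument (using the results in Section \ref{sec:tgwa}) shows $\dim M_\lambda\le 1$ and that $\Supp(M)$ is stable under $X_i^\pm$-shifts except across $\mathscr{L}$-edges; by simplicity it matches the face centers of a single connected component $D$. For non-contractible $D$, Schur applied to $C$ yields the required scalar $\xi\in\C^\times$ with $M\simeq M(D,\xi)$; for contractible $D$ no such freedom exists and $\xi=0$.

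For (iv), let $V$ be a finite-dimensional simple $\CA$-module. From $X_i^\pm(H-\lambda)=(H-\lambda\mp\alpha_i)X_i^\pm$, each $X_i^\pm$ maps $\ker(H-\lambda)$ into $\ker(H-\lambda\mp\alpha_i)$, so $V'=\bigoplus_\lambda\ker(H-\lambda)$ is an $\CA$-submodule. Finite-dimensionality guarantees $V'\neq 0$; simplicity then gives $V=V'$, so $H$ acts diagonalizably. Finite support next yields a weight $\lambda_0$ with $V_{\lambda_0+\alpha_1}=0$, whence $p_1(\lambda_0+\alpha_1/2)=0$; since all roots of $p_1=P_1^\mathscr{L}(u;-n,m)$ lie in $-\tfrac{n}{2}+\Z$, we conclude $\lambda_0\in\Z$, and hence $\Supp(V)\subseteq\lambda_0+\Z=\Z$. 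I expect the main obstacle to be the non-contractible case: pinning down the single-parameter gauge ambiguity and identifying it with the eigenvalue of $C$ requires the full TGWA machinery developed earlier together with careful cohomological bookkeeping around the cylindrical loops.
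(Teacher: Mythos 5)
The overall architecture you propose is sound, and parts (ii)--(iv) are essentially consistent with the paper's route: constructing $M(D,\xi)$ explicitly, identifying the $\C^\times$-parameter with the eigenvalue of the degree-$(m,n)$ centralizing element $C$, and the weight-space-dichotomy argument for (iv) is the intended ``easy exercise.'' However, there is a genuine gap in your proof of the converse direction of (i), and it is precisely the step the paper identifies as its technical heart.

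You assert that ``a TGWA argument (using the results in Section~\ref{sec:tgwa}) shows \dots{} that $\Supp(M)$ is stable under $X_i^\pm$-shifts except across $\mathscr{L}$-edges; by simplicity it matches the face centers of a single connected component.'' This does not follow from the results of Section~\ref{sec:tgwa}. The classification theorem there (Theorem~\ref{thm:TGWA-modules-classification}) applies only to simple weight modules \emph{without inner breaks}. The TGWA relation $X_i^-X_i^+ = p_i(H+\alpha_i/2)$ shows that $X_i^-X_i^+$ acts by zero on $M_\lambda$ when $\lambda$ and $\lambda+\alpha_i$ are separated by an $\mathscr{L}$-edge; it does not show $X_i^+M_\lambda = 0$. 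A priori, the support of a simple module could leak across an edge via an element of the cyclic subalgebra $\CC$ whose square is a nonzero ``torsion'' element of the gradation radical $\CJ_\lambda$. Ruling this out requires proving that $\CJ_\lambda$ is a nil ideal of $\CC_\lambda$, so that on a one-dimensional simple $\CC_\lambda$-module it must act by zero. The paper establishes this through the order statistic of face paths, the generalized exchange relation (Lemma~\ref{lem:GER}), and Lemmas~\ref{lem:flip-lemma}--\ref{lem:nil-thm}, culminating in Theorem~\ref{thm:radicals}; only then does Corollary~\ref{cor:no-inner-breaks} allow the classification theorem to be applied. Without this ingredient your argument has a circular flavor: the containment of the support in a single component of $\T_{m,n}\setminus\overline{\mathscr{L}}$ is taken for granted but is exactly what needs to be proved.

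Two smaller remarks. In the direct construction for (ii), you should make explicit that the transition scalars are only constrained by the products $X_i^-X_i^+$ and $X_i^+X_i^-$, so the gauge freedom quotient is $H^1(D;\C^\times)$, giving $\C^\times$ or a point according to whether $D$ is incontractible or contractible; this matches the paper but should be stated carefully since the paper instead derives the parametrization from simple modules of the Laurent polynomial algebra $\CC_\lambda \simeq \C[L,L^{-1}]$ (Proposition~\ref{prp:B_la-J_la-Description}). For (iv), your deduction that $\lambda_0\in\Z$ from $p_1(\lambda_0+\alpha_1/2)=0$ requires $p_1$ to be nonconstant; if $\mathscr{L}$ has no vertical edges one must argue with $p_2$ instead, and the trivial configuration ($p_1=p_2=1$) requires a separate (easy) observation. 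These are fixable, but the nil-radical step is the substantive missing piece.
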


The key technical result in order to establish Theorem \ref{thm:B}(i) is that in any subquotient $C_{\CA}(H)/(H-\la)$, the gradation radical is equal to the nil radical. 
In the language of TGWAs \cite{MazTur1999,Har2006}, we prove that if $M$ is a simple weight $\CA$-module, then $M$ has no inner breaks. The we can apply the general classification theorem for such modules, see Theorem \ref{thm:TGWA-modules-classification}.

\begin{Example} Let $(m,n)=(2,1)$ and $\mathscr{L}$ be given by Figure \ref{fig:21-config}. Then $\CA(\mathscr{L})$ has a unique finite-dimensional simple module, namely $M(D,0)$, and $\dim M(D,0)=2$. $\overline{\mathsf{F}}\cap D$ contains two points $(0,1)+\Ga_{2,1}$ and $(1,1)+\Ga_{2,1}$. Since $0\al_1+1\al_2=2$ and $1\al_1+1\al_2=1$, $\Supp\big(M(D,0)\big)=\{2,1\}$, thus the module $M(D,0)$ has a basis in which $H$ is represented by the diagonal matrix $\left[\begin{smallmatrix}2&0\\0&1\end{smallmatrix}\right]$.
\end{Example}

\begin{Example}
Let $\mathscr{L}$ be as in Figure \ref{fig:53-example}. Then $P^\mathscr{L}_1(u)$ has degree $9$, while $P^\mathscr{L}_2(u)$ has degree $15$. Cutting the cylinder along the edges yields five connected components, two of which are infinite (top and bottom). The remaining three correspond to all finite-dimensional simple $\CA(\mathscr{L})$-modules: a unique one-dimensional module $M(D_1,0)$, a unique two-dimensional module $M(D_2,0)$, and a one-parameter family of pairwise non-isomorphic $10$-dimensional modules $M(D_3,\xi)$, $\xi\in\C^\times$.
\end{Example}

\begin{Example} \label{ex:11-example-area-d}
Consider the algebra $\CA(\mathscr{L})$ with $\mathscr{L}$ as in Figure \ref{fig:11-d}. With $(\al_1,\al_2)=(-1,1)$ we have $P^\mathscr{L}_1(u)=P^\mathscr{L}_2(u)=\big(u-(-\frac{1}{2})\big)\big(u-(-\frac{1}{2}-d)\big)$. Thus this is the same as the algebra $\CA^{(d)}$ from Examples \ref{ex:affine-A11} and \ref{ex:finite-W}. Then every finite-dimensional simple $\CA(\mathscr{L})$-module has dimension $d$ and there is a one-parameter family of such modules.
\begin{figure}
\centering
\[
\begin{tikzpicture}
\foreach \y in {0,...,5}{
\draw[help lines] (-.5,\y)--(1.5,\y);
}
\draw[dashed] (0,-.5)--(0,5.5);
\draw[dashed] (1,-.5)--(1,5.5);
\draw[thick,Blue] (-.5,0)--(0,0)--(0,1)--(1,1)--(1,2)--(1.5,2);
\draw[thick,Blue] (-.5,3)--(0,3)--(0,4)--(1,4)--(1,5)--(1.5,5);
\fill (0,0) circle (2pt);
\fill (1,1) circle (2pt);
\fill (0,3) circle (2pt);
\fill (1,4) circle (2pt);
\node[font=\scriptsize, anchor=south] at (.5,.5) {$0$}; 
\node[font=\scriptsize, anchor=south] at (.5,1.5) {$1$}; 
\node[font=\scriptsize, anchor=south] at (.5,3.5) {$d$}; 
\node[font=\scriptsize] at (.5,2.5) {$\vdots$}; 
\draw (.5cm-2pt,.5cm-2pt)--(.5cm+2pt,.5cm+2pt);
\draw (.5cm+2pt,.5cm-2pt)--(.5cm-2pt,.5cm+2pt);
\draw (.5cm-2pt,1.5cm-2pt)--(.5cm+2pt,1.5cm+2pt);
\draw (.5cm+2pt,1.5cm-2pt)--(.5cm-2pt,1.5cm+2pt);
\draw (.5cm-2pt,3.5cm-2pt)--(.5cm+2pt,3.5cm+2pt);
\draw (.5cm+2pt,3.5cm-2pt)--(.5cm-2pt,3.5cm+2pt);
\end{tikzpicture}
\]
\caption{A $(1,1)$-periodic six-vertex configuration $\mathscr{L}$. The face points $(x_1,x_2)$ are labeled by the corresponding $H$-eigenvalue $x_1\al_1+x_2\al_2=x_2-x_1$. The corresponding noncommutative Kleinian fiber product $\CA(\mathscr{L})$ is related to the affine Lie algebra $A_1^{(1)}$ and the finite W-algebra $\mathcal{W}(\Fsl_4,\Fsl_2\oplus\Fsl_2)$, See Examples \ref{ex:affine-A11} and \ref{ex:finite-W}.}
\label{fig:11-d}
\end{figure}
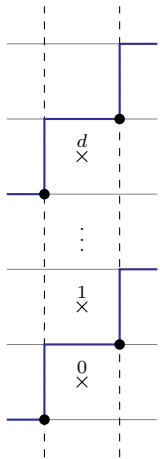
\end{Example}

\subsection{Description of the center of $\CA(\mathscr{L})$}
Our third and final main result is the description of the center of $\CA(\mathscr{L})$. To state it we need the notion of a five-vertex configuration.

\begin{Definition}\label{def:five-vertex}
A higher spin vertex configuration $\mathscr{L}$ is called a \emph{five-vertex configuration} if for every vertex $v\in V$, at most two of the four incident edges have non-zero label. Thus at every vertex $v$ there are only five types of allowed local configurations (Figure \ref{fig:five-vertex}). 
\end{Definition}

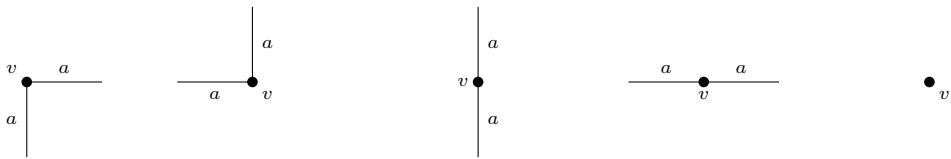
\begin{figure}[h]
\centering 
\begin{tikzpicture}
\draw (0,-1)--(0,0)--(1,0);
\fill (0,0) circle (2pt);
\node[font=\scriptsize, anchor=south east] at (0,0) {$v$};
\node[font=\scriptsize, anchor=south] at (.5,0) {$a$};
\node[font=\scriptsize, anchor=east] at (0,-.5) {$a$};
\draw (2,0)--(3,0)--(3,1);
\fill (3,0) circle (2pt);
\node[font=\scriptsize, anchor=north west] at (3,0) {$v$};
\node[font=\scriptsize, anchor=north] at (2.5,0) {$a$};
\node[font=\scriptsize, anchor=west] at (3,.5) {$a$};
\draw (6,-1)--(6,1);
\fill (6,0) circle (2pt);
\node[font=\scriptsize, anchor=east] at (6,0) {$v$};
\node[font=\scriptsize, anchor=west] at (6,.5) {$a$};
\node[font=\scriptsize, anchor=west] at (6,-.5) {$a$};
\draw (8,0)--(10, 0);
\fill (9,0) circle (2pt);
\node[font=\scriptsize, anchor=north] at (9,0) {$v$};
\node[font=\scriptsize, anchor=south] at (8.5,0) {$a$};
\node[font=\scriptsize, anchor=south] at (9.5,0) {$a$};
\fill (12,0) circle (2pt);
\node[font=\scriptsize, anchor=north west] at (12,0) {$v$};
\end{tikzpicture}
\caption{Local five-vertex configurations. The edge label $a$ can be any positive integer.}
\label{fig:five-vertex}
\end{figure}

For an example, see Figure \ref{fig:43-five-vertex}.

\begin{MainTheorem}\label{thm:C}
Let $(m,n)$ be a pair of relatively prime non-negative integers, $\mathscr{L}$ be an $(m,n)$-periodic higher spin vertex configuration, and $\CA(\mathscr{L})$ be the corresponding noncommutative Kleinian fiber product \eqref{eq:AL-def}.
\begin{enumerate}[{\rm (a)}]
\item If $\mathscr{L}$ is a five-vertex configuration, then the center of $\CA(\mathscr{L})$ is a Laurent polynomial algebra in one variable:
\[
Z\big(\CA(\mathscr{L})\big)=\C[C,C^{-1}],
\]
where $C$ is an element of degree $(m,n)$, explicitly given in \eqref{eq:C}.
\item If $\mathscr{L}$ is not a five-vertex configuration, then $Z(\CA(\mathscr{L}))=\C$.
\end{enumerate}
\end{MainTheorem}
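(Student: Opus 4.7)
The plan is to exploit the $\Z^2$-grading on $\CA = \CA(\mathscr{L})$ (where $\deg X_i^\pm = \pm\Be_i$) together with the crystalline graded ring structure of Corollary \ref{cor:PIDCGR}, and reduce centrality to an explicit polynomial solvability question in $\C[H]$. First, $Z(\CA)$ inherits the $\Z^2$-grading, and from $Hx - xH = (g_1\al_1 + g_2\al_2)x$ for $x\in\CA_g$ one sees that $Z(\CA)_g = 0$ unless $g\in\Z\cdot(m,n)$, using $(\al_1,\al_2) = (-n,m)$ and $\gcd(m,n) = 1$. The degree-zero part $\CA_0 = \C[H]$ intersects the center only in $\C$, since $f(H) X_i^\pm = X_i^\pm f(H)$ forces $f(H) = f(H-\al_i)$ and $(m,n)\neq(0,0)$. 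For $g = j(m,n)$ with $j\neq 0$, write $\CA_g = \C[H]\cdot u_g$ freely (using that $\CA$ is a domain), and pass to the localization $\CA_{\mathrm{loc}} = \CA\otimes_{\C[H]}\C(H)$. Since $\C(H)^{\sigma_1,\sigma_2} = \C$, its center equals $\C[\tilde C, \tilde C^{-1}]$ for a unique-up-to-scalar homogeneous element $\tilde C = f_1(H) u_{(m,n)}$, where $f_1\in\C(H)$ solves the functional equations
\[ f_1(H)\,\alpha(g, \Be_i) = f_1(H-\al_i)\,\alpha(\Be_i, g),\qquad i = 1, 2, \]
with $\alpha$ the crystalline cocycle (an explicit product of shifts of $P_1^\mathscr{L}, P_2^\mathscr{L}$). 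Since any $z\in Z(\CA)$ commutes with $\C[H]$, we have $Z(\CA) \subseteq Z(\CA_{\mathrm{loc}}) \cap \CA = \C[\tilde C, \tilde C^{-1}] \cap \CA$, so everything reduces to deciding which powers $\tilde C^j$ lie in $\CA$.

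The main combinatorial step is to verify that $\tilde C \in \CA$ iff $\mathscr{L}$ is a five-vertex configuration. The rational function $f_1$ factors as a product of linear factors $(H-a)$ indexed by faces on one side of $\overline{\mathscr{L}}$, and its denominator collects the ``unpaired'' linear factors at vertices where incident edges of $\mathscr{L}$ cannot be matched under the shifts $H\mapsto H - \al_i$. This matching succeeds vertex by vertex iff at every $v\in \mathsf{V}$ at most two incident edges carry nonzero label, which is precisely the five-vertex condition. In that case $f_1\in\C[H]$ and one obtains $C$ explicitly as in the statement; the normalization $C^\ast C = 1$ makes $C\in\CA^\times$, so $\C[C, C^{-1}]\subseteq\CA$ and hence $Z(\CA) = \C[C, C^{-1}]$, proving (a). Conversely, at a vertex of $\mathscr{L}$ with three or four incident nonzero edges, an uncancellable factor $(H-c_v)$ enters the denominator of $f_1$; I claim it survives in the denominator of $f_j = f_1^j\,\beta_j$, because the polynomial $\beta_j$ relating $u_{(m,n)}^j$ to $u_{j(m,n)}$ is a product of shifts of $P_i^\mathscr{L}$ that cannot contain $(H-c_v)$ with multiplicity $\geq j$. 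Hence $\tilde C^j\notin\CA$ for all $j\neq 0$, giving $Z(\CA) = \C$ in case (b).

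As a consistency check in case (a), Theorem \ref{thm:B}(iii) states that $C$ acts as the parameter $\xi$ on each simple weight module $M(D,\xi)$ with $D$ incontractible, while degree-$j(m,n)$ monomials act as zero on the finite-dimensional $M(D,0)$ with $D$ contractible (any such monomial parametrizes a non-contractible loop on $\T_{m,n}$, which must cross $\overline{\mathscr{L}}$). Thus $Z(\CA) = \C[C, C^{-1}]$ is also derivable via Schur's lemma applied to the family $\{M(D,\xi)\}$, together with faithfulness of the $\CA$-action on their direct sum. The main obstacle will be the vertex-by-vertex factor-matching analysis — concretely, identifying the cocycle ratio $\alpha(g,\Be_i)/\alpha(\Be_i,g)$ from the configuration $\mathscr{L}$ and carrying out the persistence argument showing that the obstruction $(H-c_v)$ never gets cleared in any higher power $\tilde C^j$ in case (b).
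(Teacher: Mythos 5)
Your overall reduction — use the $\Z^2$-grading to restrict central elements to degrees in $\Z\cdot(m,n)$, pass to $\CA_{\mathrm{loc}}=\CA\otimes_{\C[H]}\C(H)$, use the crystalline graded ring structure from Corollary \ref{cor:PIDCGR} to identify $Z(\CA_{\mathrm{loc}})$ with a Laurent polynomial ring in one centralizing element, and then decide which powers lie in $\CA$ — is sound and mirrors the paper's strategy. Part (a) is fine in outline: once one knows $\CA_{(m,n)}=\C[H]u_{(m,n)}$ freely, ``$\tilde C\in\CA$ iff $f_1\in\C[H]$'' is correct, and the five-vertex condition making the denominator cancel is essentially what the paper proves via a Bezout identity among the polynomials $f_{\un{i}}=\prod_\la(H-\la)^{\ord(\un{i},\la)}$ (which have no common zero precisely when every component of $\T_{m,n}\setminus\overline{\mathscr{L}}$ is incontractible). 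Two small slips: the paper's $C$ satisfies $C^\ast C=$ nonzero constant, not necessarily $1$ (the $C^\ast C=1$ normalization is specific to Example \ref{ex:finite-W}), and invertibility of $C$ follows from that constant being nonzero, not from a chosen normalization.

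The genuine gap is in part (b). Your argument asserts that the ``uncancellable'' denominator factor $(H-c_v)$ in $f_1$ ``survives in the denominator of $f_j=f_1^j\beta_j$ because $\beta_j$ \dots cannot contain $(H-c_v)$ with multiplicity $\ge j$.'' This is exactly the statement that needs proof, and it is not trivial: $\beta_j$ is built from shifts of $P_1^{\mathscr{L}},P_2^{\mathscr{L}}$ along a length-$j(m+n)$ path, and a priori roots from far-away vertices could pile up on $c_v$ as $j$ grows. You give no mechanism for ruling this out, so the claim is hanging. The paper sidesteps the entire bookkeeping by a representation-theoretic argument: if a power $C^k$ ($k\neq 0$) lay in $\CA$ it would be central, so $(C^k)^\ast C^k$ would be a degree-zero central element, hence a nonzero constant, making $C^k$ a unit in $\CA$; but when $\mathscr{L}$ is not a five-vertex configuration there is (by Lemma \ref{lem:five-characterization} and Theorem \ref{thm:B}) a finite-dimensional simple module $M(D,0)$ with $D$ contractible, and by Proposition \ref{prp:B_la-J_la-Description} and Theorem \ref{thm:radicals} every degree-$(km,kn)$ element of $\CC$ lies in $\CJ_\la$ and hence annihilates $M(D,0)_\la$ — a unit cannot annihilate a nonzero module. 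Tellingly, you sketch exactly this argument in your ``consistency check'' paragraph but present it only as a sanity check rather than as the proof; that consistency check, made precise, \emph{is} the missing argument for (b), and it is cleaner than the denominator-tracking route you actually propose.
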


\section{Twisted generalized Weyl algebras}\label{sec:tgwa}
As we prove below, noncommutative Kleinian fiber products are examples of twisted generalized Weyl algebras, introduced by Mazorchuk and Turowska \cite{MazTur1999}. In this section we recall their definition, prove several new results and clarify some details about the classification of simple weight modules without inner breaks from \cite{Har2006}. These results will be applied to the proofs of the main theorems in the sections that follow.

Throughout this section we work over an arbitrary ground field $\K$.

\subsection{Definitions}
Below we have modified the traditional definitions slightly by using square roots $\si_i$.
This allows for a more symmetric presentation and conceptual notion of breaks for weight modules.
However, it is merely a notational device because any identity such as \eqref{eq:TGWA-Consistency-Rels} can be rewritten without square roots by substituting $t_i'=\si_i^{-1/2}(t_i)$.

\begin{Definition}[TGWC \cite{MazTur1999}] \label{def:TGWC}
Let $n$ be a positive integer, $R$ an associative unital $\K$-algebra, $\Bsi=(\sigma_1^{1/2},\sigma_2^{1/2},\ldots,\sigma_n^{1/2})\in \Aut_\K(R)^n$ an $n$-tuple of commuting automorphisms of $R$, $\Bt=(t_1,t_2,\ldots,t_n)\in Z(R)^n$ an $n$-tuple of central elements in $R$. The corresponding \emph{twisted generalized Weyl construction (TGWC) of rank $n$}, denoted $\tilde{\CA}(R,\Bsi,\Bt)$, is the associative algebra obtained from $R$ by adjoining $2n$ new generators
$X_1^\pm,X_2^\pm,\ldots,X_n^\pm$ that are not required to commute with each other, nor with the elements of $R$, but are subject to the following relations for all $i,j=1,2,\ldots,n$:
\begin{equation}\label{eq:TGWA-Rels}
X_i^\pm  r = \si_i^{\pm 1}(r) X_i^\pm,\qquad 
X_i^\pm X_i^\mp = \si_i^{\pm 1/2}(t_i), \qquad 
[X_i^\pm, X_j^\mp ] =0 \quad \text{if $i\neq j$}.
\end{equation}  
\end{Definition}
Note that that we do not impose $[X_i^\pm,X_j^\pm]=0$. Instead we mod out by an ideal. Relations \eqref{eq:TGWA-Rels} imply that $\tilde{\CA}=\tilde{\CA}(R,\Bsi,\Bt)$ has a $\Z^n$-gradation $\tilde{\CA}=\bigoplus_{g\in \Z^n} \tilde{\CA}_g$ induced by $\deg r=0$ for $r\in R$ and $\deg X_i^{\pm}=\pm \Be_i$, where $\{\Be_i\}_{i=1}^n$ is the standard $\Z$-basis for $\Z^n$. One checks that $\tilde{\CA}_0$ coincides with the image of $R$ under the canonical homomorphism $R\to\tilde{\CA}$, $r\mapsto r1_{\tilde{\CA}}$.

\begin{Definition}[TGWA \cite{MazTur1999}] \label{def:TGWA}
Let $n,R,\Bsi,\Bt$ be as in Definition \ref{def:TGWC}. The corresponding \emph{twisted generalized Weyl algebra (TGWA)}, denoted $\CA(R,\Bsi,\Bt)$, is defined as the quotient $\tilde{\CA}/\CI$, where $\CI$ is the sum of all graded ideals $I=\bigoplus_{g\in\Z^n} I_g$ in $\tilde{\CA}$ with $I_0=0$.
\end{Definition}
\begin{Remark}
The last relation in \eqref{eq:TGWA-Rels} can be generalized to
\begin{equation}
 q_{ij}^\pm X_i^\pm X_j^\mp = q_{ji}^\mp X_j^\mp X_i^\pm,\qquad i\neq j,
\end{equation}
where $q_{ij}^\pm\in\K^\times$ (see \cite{MazPonTur2003}). All results in this section carry over with only minor modifications to such ``quantum'' TGWAs.
\end{Remark}

\begin{Remark} Using only Relations \eqref{eq:TGWA-Rels}, one can deduce the following \emph{exchange relation}:
\begin{equation}\label{eq:TGWA-exchange-relation}
X_i^\pm X_j^\pm \si_i^{\mp 1/2}(t_i) = \si_i^{\pm 1/2}(t_i) X_j^\pm X_i^\pm,\quad \forall i\neq j.
\end{equation}
\end{Remark}

\begin{Example}[The $n$:th Weyl algebra over $\K$]
Let $n\in\N$ be arbitrary and let $R=\K[u_1,u_2,\ldots,u_n]$. For $i=1,2,\ldots,n$, let $t_i=u_i$ and let $\sigma_i:R\to R$ be the unique $\K$-algebra automorphism determined by $\sigma_i(u_j)=u_j-\delta_{ij}$ where $\delta_{ij}$ is the Kronecker delta. Put $\Bt=(t_1,t_2,\ldots,t_n)$ and $\Bsi=(\sigma_1,\sigma_2,\ldots,\sigma_n)$.
Then $\CI$ is generated by the commutators $[X_i^\pm,X_j^\pm]$ and 
the TGWA $\CA(R,\Bsi,\Bt)$ is isomorphic to the $n$:th Weyl algebra $A_n(\K)$. 
\end{Example}

Bavula \cite{Bav1992} introduced higher rank generalized Weyl algebras. These are special cases of TGWAs (if the $t_i$ are regular).

\begin{Example}[Higher rank generalized Weyl algebras]
Let $n\in\N$ be arbitrary and let $R$ be any $\K$-algebra. For $i=1,2,\ldots,n$, let $t_i\in Z(R)$ be regular in $R$ and $\sigma_i:R\to R$ be $\K$-algebra automorphisms such that $\si_i(t_j)=t_j$ for all $i\neq j$. Put $\Bt=(t_1,t_2,\ldots,t_n)$ and $\Bsi=(\sigma_1,\sigma_2,\ldots,\sigma_n)$.
Then $\CI$ is generated by the commutators $[X_i^\pm,X_j^\pm]$ and 
the TGWA $\CA(R,\Bsi,\Bt)$ is isomorphic to the higher rank generalized Weyl algebra $R(\Bsi,\Bt)$.
\end{Example}

\begin{Example}[Primitive quotients of $U(\Fg)$]
Let $\Fg$ be a finite-dimensional complex simple Lie algebra with Cartan subalgebra $\Fh$.
Let $V$ be an infinite-dimensional simple weight $\Fg$-module such that $\dim V_\la\le 1$ for all $\la\in\Fh^\ast$. Let $J=\Ann_{U(\Fg)}V$ be the corresponding primitive ideal of the enveloping algebra $U(\Fg)$. Then $U(\Fg)/J$ is a TGWA \cite[Thm. 5.15]{HarSer2016}.
\end{Example}

For certain choices of $\Bsi$ and $\Bt$, relations \eqref{eq:TGWA-Rels} may be contradictory in the sense that $\tilde{\CA}(R,\Bsi,\Bt)=\{0\}$ (see \cite[Ex.~2.8]{FutHar2012a}). The following result resolves this problem when the $t_i$ are regular (i.e. non-zerodivisors) in $R$.
\begin{Theorem}[{\cite{FutHar2012a}}]
\label{thm:TGWA-Consistency}
If $t_1,t_2,\ldots, t_n$ are regular in $R$, then the following statements are equivalent:
\begin{enumerate}[{\rm (i)}]
\item The canonical homomorphism $R\to\tilde{\CA}(R,\Bsi,\Bt)$ is injective;
\item The canonical homomorphism $R\to\CA(R,\Bsi,\Bt)$ is injective;
\item The following equations hold, where we put $\si_i^\pm = \si_i^{\pm 1/2}$:
\begin{subequations} \label{eq:TGWA-Consistency-Rels}
\begin{align}
\label{eq:TGWA-Consistency-Rel1}
\si_j^{+}(t_i)\cdot \si_i^{+}(t_j)&=
\si_j^{-}(t_i)\cdot \si_i^{-}(t_j),\qquad \forall i\neq j,\\
\label{eq:TGWA-Consistency-Rel2}
\si_i^{+}\si_j^{+}(t_k)\cdot \si_i^{-}\si_j^{-}(t_k) &= \si_i^{+}\si_j^{-}(t_k)\cdot \si_i^{-}\si_j^{+}(t_k), \qquad \forall i\neq j\neq k\neq i.
\end{align}
\end{subequations}
\end{enumerate}
In particular, if Equations \eqref{eq:TGWA-Consistency-Rels} hold, then $\tilde{\CA}(R,\boldsymbol{\si},\boldsymbol{t})$ is non-trivial.
\end{Theorem}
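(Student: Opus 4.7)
The plan is to prove the cyclic chain (ii)$\Rightarrow$(i)$\Rightarrow$(iii)$\Rightarrow$(ii). The first implication is immediate, since the canonical map $R\to\tilde{\CA}$ is the first factor of the composition $R\to\tilde{\CA}\twoheadrightarrow\CA$, so injectivity of the composite forces injectivity of that first factor.

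For (i)$\Rightarrow$(iii) I would work entirely inside $\tilde{\CA}$. First derive the exchange relation \eqref{eq:TGWA-exchange-relation} by expanding $X_i^\pm X_j^\pm X_i^\mp$ in two ways using $[X_i^\mp,X_j^\pm]=0$ and $X_i^\pm X_i^\mp=\si_i^{\pm 1/2}(t_i)$. Then, for \eqref{eq:TGWA-Consistency-Rel1}, apply two successive exchanges to $\si_j^+(t_j)\si_i^+(t_i)X_j^+X_i^+$: the $i$-exchange turns the inner $X_j^+X_i^+$ into $X_i^+X_j^+$ with a scalar factor absorbed, then commuting $R$-scalars past $X$'s via $X_k^\pm r=\si_k^{\pm 1}(r)X_k^\pm$ and applying the $j$-exchange returns the word back to $X_j^+X_i^+$. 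This yields an identity $(A-B)\,X_j^+X_i^+=0$ in $\tilde{\CA}$ with $A,B\in R$. Right-multiplying by $X_i^-X_j^-$ collapses the $X$-tail to $\si_i^+\si_j(t_i)\si_j^+(t_j)$, a product of translates of the regular $t_k$'s and therefore regular in $R$; by (i) we conclude $A=B$ in $R$, and applying $\si_i^-\si_j^-$ to both sides gives exactly \eqref{eq:TGWA-Consistency-Rel1}. For \eqref{eq:TGWA-Consistency-Rel2} I would run the same argument on the triple $X_i^+X_j^+X_k^+$ (three distinct indices), comparing the two braided sequences of exchanges that reorder it to $X_k^+X_j^+X_i^+$, then right-multiplying by $X_k^-X_j^-X_i^-$ and invoking regularity of the resulting product of $t$-translates.

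For (iii)$\Rightarrow$(ii), the essential direction, I would construct an explicit faithful realization of $\CA$. On the free left $R$-module $M=\bigoplus_{g\in\Z^n}R\Be_g$ define shift operators
\[
X_i^+(r\Be_g)=\si_i(r)\,u_{i,g}^+\,\Be_{g+\Be_i},\qquad X_i^-(r\Be_g)=\si_i^{-1}(r)\,u_{i,g}^-\,\Be_{g-\Be_i},
\]
with $u_{i,g}^\pm\in R$ chosen as suitable products of $\Bsi^g$-translates of the $t_k$'s arranged so that $X_i^\pm X_i^\mp$ acts as multiplication by $\si_i^{\pm 1/2}(t_i)$. The relation $X_i^\pm r=\si_i^{\pm 1}(r)X_i^\pm$ is then automatic from the definition. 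The cross-commutation $[X_i^\pm,X_j^\mp]=0$ evaluated on each $\Be_g$ reduces, after a short bookkeeping with the $\si$'s, to equation \eqref{eq:TGWA-Consistency-Rel1}, while the only possible triple obstruction from reordering $X_i^\pm X_j^\pm X_k^\pm$ collapses to \eqref{eq:TGWA-Consistency-Rel2}; thus (iii) is exactly what is needed for $M$ to become a $\tilde{\CA}$-module. Since $R$ acts freely on $M$ via $r\cdot s\Be_g=rs\Be_g$, the kernel of the resulting graded homomorphism $\tilde{\CA}\to\End(M)$ has trivial degree-zero component and is hence contained in $\CI$. The induced homomorphism $\CA\to\End(M)$ restricts to an injection $R\hookrightarrow\CA$, giving (ii). The main obstacle is precisely the combinatorial consistency of the $u_{i,g}^\pm$ across all of $\Z^n$: verifying the cross-commutations and the triple relations simultaneously uses the full strength of \eqref{eq:TGWA-Consistency-Rels}, and this is the heart of the proof.
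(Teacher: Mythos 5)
This theorem is stated as a citation to \cite{FutHar2012a}; the present paper does not reproduce a proof of it, so there is no in-paper argument to compare against, and your proposal must be judged on its own.

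Your cycle (ii)$\Rightarrow$(i)$\Rightarrow$(iii)$\Rightarrow$(ii) is sensibly organized, although (i)$\Leftrightarrow$(ii) is in fact automatic in both directions: since $\CI$ is a sum of graded ideals with trivial degree-zero component, $\CI$ itself has $\CI_0=0$, and $R$ lands in degree zero, so $\ker(R\to\tilde{\CA})=\ker(R\to\CA)$ with no work at all. Your (i)$\Rightarrow$(iii) argument for the binary relation \eqref{eq:TGWA-Consistency-Rel1} is correct and complete as sketched: two exchange moves on $X_j^+X_i^+$ give $(A-B)X_j^+X_i^+=0$ with $A,B\in R$, right-multiplying by $X_i^-X_j^-$ produces a regular factor, and applying $\si_i^{-1/2}\si_j^{-1/2}$ yields exactly \eqref{eq:TGWA-Consistency-Rel1}. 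For the ternary relation you should note explicitly that the hexagon comparison of the two reorderings of $X_i^+X_j^+X_k^+$ produces a mixture of factors involving all three $t_i,t_j,t_k$, and you need to invoke the already-established binary relation to cancel the $t_i$- and $t_j$-contributions and isolate \eqref{eq:TGWA-Consistency-Rel2}.

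The genuine gap is in (iii)$\Rightarrow$(ii), which you correctly identify as the essential direction. Saying the twist functions $u_{i,g}^\pm$ are ``suitable products of $\Bsi^g$-translates of the $t_k$'s'' is not a construction. To see that this is not merely a bookkeeping detail, note that the cross-commutation $[X_i^+,X_j^-]=0$ evaluated on $\Be_g$ forces $\si_i(u_{j,g}^-)\,u_{i,g-\Be_j}^+ = \si_j^{-1}(u_{i,g}^+)\,u_{j,g+\Be_i}^-$, and the naive constant choice $u_{i,g}^-=1$, $u_{i,g}^+=\si_i^{1/2}(t_i)$ reduces this to $\si_j(t_i)=t_i$, which is the generalized-Weyl-algebra special case and fails for genuine TGWAs (it is precisely what the algebras in this paper violate). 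One has to build genuinely $g$-dependent twists whose compatibility across all of $\Z^n$ is a $2$-cocycle-type condition, and verifying that \eqref{eq:TGWA-Consistency-Rels} is sufficient to solve that cocycle problem is the actual content of the theorem. Your proposal names this obstacle honestly, but until the $u_{i,g}^\pm$ are written down and the compatibility verified (or the existence of a solution established by some other means), the implication (iii)$\Rightarrow$(ii) remains unproved.
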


We call Equations \eqref{eq:TGWA-Consistency-Rel1} (respectively \eqref{eq:TGWA-Consistency-Rel2}) the \emph{binary} (respectively \emph{ternary}) \emph{TGWA consisteny equations}.

\begin{Assumption}
In the rest of this paper we assume that $t_1,t_2,\ldots,t_n$ are regular in $R$, and that \eqref{eq:TGWA-Consistency-Rels} hold. Using the injective canonical homomorphisms $r\mapsto r1_{\tilde{\CA}}$ and $r\mapsto r1_{\CA}$ we identify $R$ with the degree zero components $\tilde{\CA}_0$ and $\CA_0$.
\end{Assumption}

\subsection{A characterization of the ideal $\CI$}
\label{sec:ideal-characterization} 
Finite generating sets for the ideal $\CI$ have been found in some special cases 
\cite{MazTur1999,Har2009,HarOin2013} but the existence of such sets is unknown in general. 
It is known that $\CI$ coincides with the gradation radical relative to certain Shapovalov-type forms \cite{MazPonTur2003,HarOin2013} and that $\CI$ is the sum of all graded left (or right) ideals intersecting $R$ trivially \cite[Cor.~5.4]{MazPonTur2003}.

In this section we give a new characterizations of $\CI$ which simplifies calculations and the construction of homomorphisms. It states that $\CI$ is simply the set of $R$-torsion elements in $\tilde{\CA}$.
Let $a\mapsto a^\ast$ denote the involution (anti-automorphism of order two) of $\tilde{\CA}$ given by $(X_i^\pm)^\ast=X_i^\mp$, $r^\ast=r \,\forall r\in R$.
Put
\[
\si_g=\si_1^{g_1}\circ\si_2^{g_2}\circ\cdots\circ\si_n^{g_n},\qquad \text{for $g=(g_1,g_2,\ldots,g_n)\in\Z^n$}.
\]
We need the following trace-like commutation relation.
\begin{Lemma}[{\cite[Lem.~5.1]{FutHar2012a}}]
If $a\in\tilde{\CA}$ is a \emph{monic monomial} i.e. of the form $a=X_{i_1}^{\ep_1}X_{i_2}^{\ep_2}\cdots X_{i_k}^{\ep_k}$ where $i_j\in\{1,2,\ldots,n\}$, $\ep_j\in\{+,-\}$ and if $b\in\tilde{\CA}_{-g}$ where $g=\deg(a)$, then
\begin{equation}\label{eq:TGWAtrace}
ab=\si_g(ba).
\end{equation}
\end{Lemma}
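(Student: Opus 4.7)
\emph{Proof plan.} The plan is to induct on the length $k$ of the monic monomial $a = X_{i_1}^{\ep_1}\cdots X_{i_k}^{\ep_k}$ of degree $g = \sum_{j=1}^{k} \ep_j \Be_{i_j}$. A preliminary observation that will be used throughout is the identity $ar = \si_g(r)a$ for every $r \in R$, obtained by iterating $X_i^\pm r = \si_i^{\pm 1}(r) X_i^\pm$ and using that the $\si_j$ pairwise commute. This says that left multiplication by $a$ implements the automorphism $\si_g$ on $R$, and it disposes of the trivial case $k=0$.

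The heart of the argument is the base case $k=1$, where $a = X_i^\ep$ and one must verify that $X_i^\ep b = \si_i^\ep(b X_i^\ep)$ for every $b \in \tilde{\CA}_{-\ep \Be_i}$. By $R$-linearity it suffices to take $b$ a monomial, and one proceeds by pushing $X_i^\ep$ through $b$ using the defining relations \eqref{eq:TGWA-Rels}: the pass-through rule $X_j^\pm r = \si_j^{\pm 1}(r) X_j^\pm$, the commutations $[X_i^\pm, X_j^\mp] = 0$ for $i \neq j$, and the evaluations $X_i^\pm X_i^\mp = \si_i^{\pm 1/2}(t_i)$. Rewriting the right-hand side $\si_i^\ep(b X_i^\ep)$ using the same moves, and splitting into cases according to whether the rightmost generator appearing in $b$ has index $i$ (producing an $X_i^\ep X_i^{-\ep}$ collision) or a different index (producing a clean $+/-$ swap), one checks that both sides agree.

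For the inductive step with $k \geq 2$, write $a = a' X_i^\ep$ where $a'$ is a monic monomial of length $k-1$ and degree $g' = g - \ep \Be_i$, and let $b \in \tilde{\CA}_{-g}$. Setting $c := X_i^\ep b \in \tilde{\CA}_{-g'}$, the inductive hypothesis applied to $a'$ and $c$ gives
\[
ab \;=\; a'(X_i^\ep b) \;=\; a' c \;=\; \si_{g'}(c a') \;=\; \si_{g'}\bigl(X_i^\ep (b a')\bigr).
\]
Since $ba' \in \tilde{\CA}_{-\ep \Be_i}$, the base case applies and yields $X_i^\ep(b a') = \si_i^\ep(b a' X_i^\ep) = \si_i^\ep(ba)$. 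Combining these and using that the $\si_j$ commute gives $ab = \si_{g'}\si_i^\ep(ba) = \si_g(ba)$, as required. The main obstacle is thus the $k=1$ base case, which demands a careful hands-on verification using the TGWC relations; once that is in hand, the inductive step is a clean telescoping.
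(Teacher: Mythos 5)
Your inductive framework is sound: the step from $k-1$ to $k$ using $c = X_i^\ep b$ and $ba'$ is a clean telescoping, and the observation $ar = \si_g(r)a$ is correct. The gap is entirely in the base case $k=1$, and it is a genuine one: the identity $X_i^\ep b = \si_i^\ep(bX_i^\ep)$ is \emph{not} a consequence of the defining relations \eqref{eq:TGWA-Rels} alone, so no amount of ``pushing $X_i^\ep$ through $b$'' can succeed. Concretely, your case analysis on the rightmost generator of $b$ misses the case where $b$ ends in $X_j^\ep$ with $j\neq i$ (same sign as $X_i^\ep$, different index), for which \eqref{eq:TGWA-Rels} provides no swap or collision. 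Worse, even when both sides can be reduced to $R$ by hand, they need not agree as a formal consequence of \eqref{eq:TGWA-Rels}. Take $\ep=+$ and $b = X_j^- X_i^- X_j^+$ with $j\neq i$: then $X_i^+b$ simplifies to $\si_j^{-1}\si_i^{1/2}(t_i)\,\si_j^{-1/2}(t_j)$ while $\si_i(bX_i^+)$ simplifies to $\si_i\si_j^{-1/2}(t_j)\,\si_i^{1/2}(t_i)$, and equating these is \emph{exactly} the binary consistency relation \eqref{eq:TGWA-Consistency-Rel1}. So the base case is equivalent to an assumption you never invoke.

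The standard argument bypasses the case analysis entirely (and renders your induction unnecessary). Since $ba\in\tilde{\CA}_0 = R$ and $ar=\si_g(r)a$, one computes $aba$ in two ways: $aba = a(ba) = \si_g(ba)\,a$ and $aba = (ab)\,a$, giving $\bigl(ab-\si_g(ba)\bigr)a = 0$. Right-multiply by $a^\ast$: the word $aa^\ast$ collapses, via the collisions $X_{i_j}^{\ep_j}X_{i_j}^{-\ep_j}=\si_{i_j}^{\ep_j/2}(t_{i_j})$ and the push-through rule, to a product of translates of the $t_i$, which is regular in $R$ by the standing assumption. Hence $\bigl(ab-\si_g(ba)\bigr)\cdot aa^\ast = 0$ forces $ab=\si_g(ba)$, using the injectivity $R\hookrightarrow\tilde{\CA}$ (which is where the consistency relations enter, via Theorem \ref{thm:TGWA-Consistency}). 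In short: you need to appeal to regularity of the $t_i$ and to consistency, and the mechanism is cancellation against $aa^\ast$ rather than pushing $X_i^\ep$ through $b$.
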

Let $R_\mathrm{reg}$ denote the set of regular elements of $R$. 

\begin{Theorem}\label{thm:TGWA-gradation-radical}
Let $\tilde{\CA}=\tilde{\CA}(R,\Bsi,\Bt)$ be a TGWC. Then the following subsets of $\tilde{\CA}$ coincide:
\begin{enumerate}[{\rm (i)}]
\item The sum $\CI$ of all graded ideals trivially intersecting $R$;
\item The set of $R$-torsion elements in $\tilde{\CA}$:
\begin{equation} \label{eq:TRA-def}
\begin{aligned}
T_R(\tilde{\CA})&=\{a\in \tilde{\CA}\mid \exists r\in R_\mathrm{reg}: r\cdot a=0\}\\
 &=\{a\in \tilde{\CA}\mid \exists r\in R_\mathrm{reg}: a\cdot r=0\}.
\end{aligned}
\end{equation}
\end{enumerate}
\end{Theorem}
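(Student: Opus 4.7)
The plan is to prove both containments $\CI \subseteq T_R(\tilde{\CA})$ and $T_R(\tilde{\CA})\subseteq \CI$ in parallel for the left and right torsion characterizations in \eqref{eq:TRA-def}, so that their coincidence emerges automatically. At the outset, I would note that $\CI$ is itself graded with $\CI_0 = 0$: as a sum of graded ideals $I$ with $I\cap R = 0$, each such $I$ satisfies $I_0 = I\cap\tilde{\CA}_0 = 0$, hence $\CI_0 = 0$.

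For $\CI \subseteq T_R(\tilde{\CA})$, fix a homogeneous $a \in \CI_g$ and any monic monomial $Y$ of degree $-g$ in the $X_i^{\pm}$. Then $Ya$ and $aY$ both lie in $\CI_0 = 0$. A straightforward induction on the length of $Y$, using the relations $X_i^{\pm}X_i^{\mp} = \si_i^{\pm 1/2}(t_i)$ together with the commutations in \eqref{eq:TGWA-Rels}, reduces $Y^{\ast}Y$ and $YY^{\ast}$ to products of $\si_h$-shifts of the central regular elements $t_i$, and any such product is itself central and regular in $R$. Consequently $(Y^{\ast}Y)\cdot a = Y^{\ast}(Ya) = 0$ and $a\cdot(YY^{\ast}) = (aY)Y^{\ast} = 0$, placing $a$ simultaneously in both versions of $T_R(\tilde{\CA})$; additivity over homogeneous components extends this to all of $\CI$.

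For the reverse containment, suppose $a\cdot r = 0$ with $r \in R_{\mathrm{reg}}$ (the case $r\cdot a = 0$ is analogous). Decomposing $a = \sum_g a_g$, right multiplication by $r \in \tilde{\CA}_0$ preserves degree, so $a_g r = 0$ for every $g$, and it suffices to show each $a_g \in \CI$. I would verify that the graded right ideal $L := a_g\tilde{\CA}$ satisfies $L\cap R = L_0 = 0$ and invoke \cite[Cor.~5.4]{MazPonTur2003} to conclude $L\subseteq\CI$. For any monic monomial $m$ of degree $-g$, iterating \eqref{eq:TGWA-Rels} yields $rm = m\si_g(r)$, so $(a_g m)\cdot \si_g(r) = a_g(rm) = 0$; since $a_g m \in R$ and $\si_g(r)\in R_{\mathrm{reg}}$, this forces $a_g m = 0$, and the trace-like relation \eqref{eq:TGWAtrace} then gives $m a_g = \si_{-g}(a_g m) = 0$. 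Any $b \in \tilde{\CA}_{-g}$ is an $R$-linear combination of such monic monomials $m$ (from either side), and the identities $a_g m = m a_g = 0$ together with the commutations in \eqref{eq:TGWA-Rels} imply $a_g b = 0$. Hence $L_0 = 0$, and $a_g \in \CI$.

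The main technical obstacle I anticipate is the combinatorial identification of $Y^{\ast}Y$ and $YY^{\ast}$ with products of $\si_h$-shifts of the $t_i$ for an arbitrary monic monomial $Y$; this is a straightforward induction on the length of $Y$ but requires careful use of the exchange relation \eqref{eq:TGWA-exchange-relation} to reorder factors. The remainder of the argument is structural, relying only on the identification $\tilde{\CA}_0 = R$ (from the standing consistency assumption), the $\Z^n$-grading, and the regularity of the $t_i$ (which is preserved under the automorphisms $\si_h$).
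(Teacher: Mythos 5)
Your proof is correct and follows essentially the same strategy as the paper's: for $\CI\subseteq T_R(\tilde{\CA})$ you use that $Y^*Y$ (resp.\ $YY^*$) is a regular central element of $R$ annihilating $a$, and for the reverse containment you use the trace relation \eqref{eq:TGWAtrace} to show the ideal generated by a homogeneous $a_g$ meets $R$ trivially. The one genuine variation is in the second containment: you work with the graded \emph{right} ideal $a_g\tilde{\CA}$ and appeal to the one-sided characterization of $\CI$ from \cite[Cor.~5.4]{MazPonTur2003}, whereas the paper shows directly that the \emph{two-sided} ideal $\langle a\rangle$ meets $R$ trivially (showing $bac=0$ for monic monomials $b,c$), which is self-contained given Definition \ref{def:TGWA} and avoids the extra citation. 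Both routes hinge on the same trace relation and regularity of the $t_i$, so the difference is cosmetic rather than structural. One further small remark: once you have established $a_g m=0$ for all monic monomials $m$ of degree $-g$, the vanishing $L_0=a_g\tilde{\CA}_{-g}=0$ already follows by writing each element of $\tilde{\CA}_{-g}$ as a right $R$-linear combination $\sum m_i r_i$ of monic monomials; the companion identity $m a_g=0$ is not actually needed for this step.
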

\begin{proof}
We use the first equality in \eqref{eq:TRA-def} as the definition of $T_R(\tilde{\CA})$. The other case follows by applying the involution $\ast$ and using that $\CI^\ast=\CI$.

$\CI\subseteq T_R(\tilde{\CA})$: Let $a\in\CI$. Since $\CI$ is graded we may assume that $a$ is homogeneous. Let $g=\deg(a)$. Let $b$ be a product of generators $X_i^\pm$ such that $\deg(b)=-g$. Then $b^\ast b$ can be simplified using \eqref{eq:TGWA-Rels} to a product of elements of the form $\si_i^k(t_j)$ and is hence a non-zerodivisor in $R$. On the other hand, $ba\in\CI\cap R=\{0\}$, hence $(b^\ast b)a=0$ which proves that $a\in T_R(\tilde{\CA})$. 

$T_R(\tilde{\CA})\subseteq\CI$: Let $a\in T_R(\tilde{\CA})$. Again we may assume that $a$ is homogeneous and let $g=\deg(a)$. Thus $ra=0$ for some $r\in R_\mathrm{reg}$. 
We show that $\langle a\rangle \cap R=\{0\}$. It suffices to show that $bac=0$ for any monic monomials $b$ and $c$ such that $\deg(b)+\deg(a)+\deg(c)=0$. By \eqref{eq:TGWAtrace} we have
$bac=\si_h(acb)$ where $h=\deg(b)$. Since $ra=0$, clearly $racb=0$. Since $r$ is regular in $R$ and $acb\in R$, we conclude $acb=0$.
\end{proof}

The new description (ii) is surprising in that it only depends on $R$, not on the rest of the $\Z^n$-gradation of $\tilde{\CA}$. Two immediate applications are to the construction of homomorphisms and isomorphisms involving TGWAs.
\begin{Corollary}
If $\CB$ is any $\K$-algebra and $\tilde{\varphi}:\tilde{\CA}(R,\Bsi,\Bt)\to \CB$ is a $\K$-algebra homomorphism such that $\tilde{\varphi}(r)$ is regular in $\CB$ for all $r\in R_\mathrm{reg}$, then $\tilde{\varphi}$ induces a $\K$-algebra homomorphism $\varphi:\CA(R,\Bsi,\Bt)\to \CB$.
\end{Corollary}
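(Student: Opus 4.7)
The plan is to apply Theorem \ref{thm:TGWA-gradation-radical} directly; once that characterization of $\CI$ is available, the corollary follows with almost no additional work. Since $\CA(R,\Bsi,\Bt)=\tilde{\CA}(R,\Bsi,\Bt)/\CI$, the universal property of the quotient guarantees a factorization of $\tilde{\varphi}$ through $\CA$ as soon as we verify $\tilde{\varphi}(\CI)=0$.

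To check this, I would take an arbitrary $a\in\CI$ and invoke part (ii) of the theorem, which identifies $\CI$ with the set $T_R(\tilde{\CA})$ of $R$-torsion elements. Thus there is some $r\in R_{\mathrm{reg}}$ with $r\cdot a = 0$ in $\tilde{\CA}$. Applying the $\K$-algebra homomorphism $\tilde{\varphi}$ gives $\tilde{\varphi}(r)\cdot\tilde{\varphi}(a)=0$ in $\CB$. By the regularity hypothesis on $\tilde{\varphi}$, the element $\tilde{\varphi}(r)$ is a non-zerodivisor in $\CB$, and therefore $\tilde{\varphi}(a)=0$.

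This shows $\CI\subseteq\ker\tilde{\varphi}$, so $\tilde{\varphi}$ descends uniquely to an algebra homomorphism $\varphi:\CA(R,\Bsi,\Bt)\to\CB$ making the evident diagram commute. There is essentially no obstacle: all the content sits inside Theorem \ref{thm:TGWA-gradation-radical}, and the present corollary is the natural payoff of reformulating $\CI$ purely in terms of $R$-torsion, which is a condition that is manifestly preserved by any homomorphism sending regular elements of $R$ to regular elements of $\CB$.
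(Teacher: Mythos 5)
Your argument is correct and is exactly the intended route: the paper states this as an immediate consequence of Theorem \ref{thm:TGWA-gradation-radical}, and your use of the characterization $\CI = T_R(\tilde{\CA})$ together with the universal property of the quotient is precisely the payoff the paper has in mind. No gaps.
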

\begin{Corollary}\label{cor:R-ring-Isomorphism}
If two TGWCs $\tilde{\CA}(R,\Bsi,\Bt)$ and $\tilde{\CA}(R,\Bsi',\Bt')$ (with the same base ring $R$) are isomorphic as $R$-rings, then so are the corresponding TGWAs $\CA(R,\Bsi,\Bt)$ and $\CA(R,\Bsi',\Bt')$.
\end{Corollary}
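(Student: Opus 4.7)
The plan is to leverage Theorem \ref{thm:TGWA-gradation-radical}, which characterizes the defining ideal $\CI$ intrinsically as the $R$-torsion submodule $T_R(\tilde{\CA})$ of the TGWC. Since this description depends only on the $R$-bimodule structure (and in particular on which elements of $R$ are regular), one expects any $R$-ring isomorphism to carry $\CI$ to the corresponding ideal $\CI'$ on the other side, and the corollary to follow immediately by passing to quotients.

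More explicitly, let $\tilde{\varphi}\colon \tilde{\CA}(R,\Bsi,\Bt)\to\tilde{\CA}(R,\Bsi',\Bt')$ be an $R$-ring isomorphism, meaning that $\tilde{\varphi}$ is a $\K$-algebra isomorphism intertwining the structure maps $\iota\colon R\to\tilde{\CA}(R,\Bsi,\Bt)$ and $\iota'\colon R\to\tilde{\CA}(R,\Bsi',\Bt')$. If $a\in T_R\bigl(\tilde{\CA}(R,\Bsi,\Bt)\bigr)$, pick $r\in R_{\mathrm{reg}}$ with $\iota(r)\cdot a=0$. Applying $\tilde{\varphi}$ yields $\iota'(r)\cdot\tilde{\varphi}(a)=0$ with the same regular $r$, so $\tilde{\varphi}(a)\in T_R\bigl(\tilde{\CA}(R,\Bsi',\Bt')\bigr)$. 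The converse inclusion follows by applying the same reasoning to $\tilde{\varphi}^{-1}$. Invoking Theorem \ref{thm:TGWA-gradation-radical} on both sides then gives $\tilde{\varphi}(\CI)=\CI'$.

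Consequently $\tilde{\varphi}$ descends to a $\K$-algebra isomorphism $\varphi\colon\CA(R,\Bsi,\Bt)\to\CA(R,\Bsi',\Bt')$ between the quotient algebras, and the induced map clearly still intertwines the (injective) structure maps from $R$, so it is an $R$-ring isomorphism. There is no real obstacle; the whole argument is a direct transport-of-structure consequence of the intrinsic characterization of $\CI$ established in the preceding theorem, which is presumably why the result is stated as a corollary. The only subtlety worth flagging in a written proof is to note that $\tilde{\varphi}$ restricts to a bijection of $R_{\mathrm{reg}}$ with itself (via $\iota,\iota'$), so the property ``annihilated by some regular element of $R$'' is preserved in either direction.
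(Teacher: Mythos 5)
Your argument is correct and is precisely the route the paper intends: the corollary is presented as an immediate consequence of Theorem \ref{thm:TGWA-gradation-radical}, and your transport-of-structure argument via the $R$-torsion characterization $\CI=T_R(\tilde{\CA})$ fills in exactly the steps the paper leaves implicit. The final remark about a bijection of $R_{\mathrm{reg}}$ is not actually needed (the $R$-ring condition $\tilde\varphi\circ\iota=\iota'$ already lets you carry the same regular $r\in R$ across), but this does not affect the validity of the proof.
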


\begin{Corollary}\label{cor:fiber-TGWA}
Any noncommutative Kleinian fiber product $\CA_{\al_1,\al_2}(p_1,p_2)$ is isomorphic to a TGWA $\CA(R,\Bsi,\Bt)$ where $R=\C[u]$, $\Bsi=(\si_1,\si_2)$, $\si_i(u)=u-\al_i$, $\Bt=(p_1,p_2)$.
\end{Corollary}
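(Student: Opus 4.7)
The plan is to check that the defining relations \eqref{eq:Aalbepq-rels} of $\tilde{\CA}_{\al_1,\al_2}(p_1,p_2)$ are literally the TGWC relations \eqref{eq:TGWA-Rels} for the data $R=\C[u]$, $\si_i(u)=u-\al_i$, $t_i=p_i$, and then identify the ideal $\CI$ from Definition \ref{def:NCKFP} with the $R$-torsion ideal $T_R$ via Theorem \ref{thm:TGWA-gradation-radical}.

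First I would verify TGWA consistency. Since $n=2$ there are no ternary relations, and the binary equation \eqref{eq:TGWA-Consistency-Rel1}, using the square-root convention $\si_i^{\pm1/2}(u)=u\mp\al_i/2$, reduces to
\[
p_1\big(u-\tfrac{\al_2}{2}\big)\,p_2\big(u-\tfrac{\al_1}{2}\big)=p_1\big(u+\tfrac{\al_2}{2}\big)\,p_2\big(u+\tfrac{\al_1}{2}\big),
\]
which is exactly the MTE \eqref{eq:MTeq} assumed throughout. Hence Theorem \ref{thm:TGWA-Consistency} applies, $R$ embeds in $\CA(R,\Bsi,\Bt)$, and Theorem \ref{thm:TGWA-gradation-radical} identifies the TGWA with $\tilde{\CA}(R,\Bsi,\Bt)/T_R(\tilde{\CA}(R,\Bsi,\Bt))$.

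Next, the correspondence $H\leftrightarrow u$, $X_i^\pm\leftrightarrow X_i^\pm$ translates each defining relation of $\tilde{\CA}_{\al_1,\al_2}(p_1,p_2)$ into a TGWC relation: the commutators $HX_i^\pm-X_i^\pm H=\pm\al_i X_i^\pm$ are equivalent to $X_i^\pm u=\si_i^{\pm1}(u)X_i^\pm$ (so $X_i^\pm r=\si_i^{\pm1}(r)X_i^\pm$ for all $r\in R=\C[u]$); the relations $X_i^+X_i^-=p_i(H-\al_i/2)$ and $X_i^-X_i^+=p_i(H+\al_i/2)$ are $X_i^\pm X_i^\mp=\si_i^{\pm1/2}(t_i)$; and the mixed commutations are $[X_i^\pm,X_j^\mp]=0$. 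This bijection of presentations produces mutually inverse $\C$-algebra isomorphisms between the TGWCs $\tilde{\CA}_{\al_1,\al_2}(p_1,p_2)$ and $\tilde{\CA}(R,\Bsi,\Bt)$.

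Finally, under this isomorphism the ideal $\CI$ of Definition \ref{def:NCKFP} (elements annihilated on the left by some nonzero $f\in\C[H]$) coincides with $T_R(\tilde{\CA}(R,\Bsi,\Bt))$, since $R=\C[u]$ is a domain and so its regular elements are precisely its nonzero elements; the left–right symmetry in \eqref{eq:TRA-def} shows that the definitions agree on both sides. Passing to quotients then yields $\CA_{\al_1,\al_2}(p_1,p_2)\cong\CA(R,\Bsi,\Bt)$. The only substantive step is the consistency check, which is handled by the standing MTE assumption; the rest is direct bookkeeping between two equivalent presentations.
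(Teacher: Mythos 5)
Your proof is correct and coincides with the paper's intended argument: the paper states the corollary immediately after Theorem \ref{thm:TGWA-gradation-radical} precisely because the defining relations \eqref{eq:Aalbepq-rels} are visibly the TGWC relations \eqref{eq:TGWA-Rels} for $R=\C[u]$, $\si_i(u)=u-\al_i$, $t_i=p_i$, and the ideal $\CI$ of Definition \ref{def:NCKFP} (left-annihilation by a nonzero polynomial in $H$) is exactly the $R$-torsion ideal $T_R(\tilde\CA)$ since $\C[u]$ is a domain, which Theorem \ref{thm:TGWA-gradation-radical} identifies with the TGWA ideal $\CI$. Your observation that the binary consistency equation \eqref{eq:TGWA-Consistency-Rel1} specializes to the MTE \eqref{eq:MTeq} (and that no ternary equations arise in rank two) correctly accounts for the standing assumptions, so there is nothing to add.
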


\subsection{TGWA vs CGR}
The class of crystalline graded rings (CGR) was introduced by Nauwelaerts and Van Oystaeyen in \cite{NauVan2008} and further studied in \cite{NeiVan2009,NeiVan2010,OinSil2009}. Any generalized Weyl algebra is a CGR \cite[Sec.~3.2.6]{NauVan2008}. However, the same is not expected to be true for all TGWAs, due to the fact that $X_i^+$ and $X_j^+$ in general do not commute. Nevertheless, in this section we prove that if $R$ is a PID, then any TGWA with base ring $R$ is a crystalline graded ring. This will be applied to the computation of the center of the algebras $\CA(\mathscr{L})$ in Section \ref{sec:center}.

\begin{Definition}[CGR \cite{NauVan2008}]
A group-graded ring $D=\bigoplus_{g\in \Ga} D_g$ is called a 
\emph{crystalline graded ring} if there exist $a_g\in D_g$ such that
$D_g$ is free as a left and as a right $D_e$-module of rank one with basis $a_g$ for each $g\in \Ga$, where $e$ is the identity of the group $\Ga$.
\end{Definition}

\begin{Definition} Let $D$ be a ring and $M$ be a $D$-module. We say that
$M$ is \emph{uniform} if the intersection of any two nonzero submodules of $M$ is nonzero.
\end{Definition}

\begin{Lemma}\label{lem:PIDCGR}
 Let $\CA=\CA(R,\Bsi,\Bt)$ be a TGWA and let $g\in G$. Then
\begin{enumerate}[{\rm (i)}]
\item \label{it:PIDCGR-fg} $\CA_g$ is finitely generated as a left and as a right $R$-module;
\item \label{it:PIDCGR-ff} $\CA_g$ is faithful as a left and as a right $R$-module;
\item \label{it:PIDCGR-unif} If $R$ is uniform as a left (respectively right) $R$-module, then $\CA_g$ is uniform as a left (respectively right) $R$-module.
\end{enumerate}
\end{Lemma}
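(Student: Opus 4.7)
The plan is to prove all three parts by choosing, for each $g\in \Z^n$, a monic monomial $X_g\in\tilde{\CA}$ of degree $g$ and minimal length $\ell=\sum_i|g_i|$, together with its involution $X_g^\ast$. For part (i), I would use the relations $X_i^\pm X_i^\mp=\si_i^{\pm 1/2}(t_i)\in R$ and $[X_i^\pm,X_j^\mp]=0$ for $i\neq j$ to contract or commute adjacent opposite-sign letters in any degree-$g$ monomial, reducing any word modulo $R$-multiplication to one of minimal length $\ell$. Combined with $X_i^\pm r=\si_i^{\pm 1}(r)X_i^\pm$ to move coefficients past, this presents $\tilde{\CA}_g$ as the left (and right) $R$-span of the finitely many minimal-length monomials of degree $g$, and (i) descends to the quotient $\CA_g$.

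For part (ii), a short induction on $\ell$ using the contractions $X_i^\pm X_i^\mp=\si_i^{\pm 1/2}(t_i)$ together with the commutation of opposite-sign letters at distinct indices shows that $X_gX_g^\ast$ is a product of elements of the form $\si_v^{\pm}(t_i)$; each is regular since $t_i$ is regular and $\si_i$ is an automorphism, so $X_gX_g^\ast$ is regular in $R$. If $r\in R$ annihilates $\CA_g$ from the left, then $rX_g=0$ and hence $r\cdot(X_gX_g^\ast)=0$ in $R=\CA_0$, forcing $r=0$ by regularity; the right-faithful case is symmetric via $X_g^\ast X_g$.

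For part (iii), the crucial step is a strong $R$-torsion-freeness of $\CA$ extracted from Theorem \ref{thm:TGWA-gradation-radical}: if $ra=0$ in $\CA$ with $r\in R$ regular, then any lift $\tilde{a}\in\tilde{\CA}$ satisfies $r\tilde{a}\in\CI=T_R(\tilde{\CA})$, so $r'r\tilde{a}=0$ for some regular $r'\in R$; the product $r'r$ is again regular, so $\tilde{a}\in T_R(\tilde{\CA})=\CI$, i.e., $a=0$ (and symmetrically on the right). Granted this, I would use the left $R$-linear map
\[\phi:\CA_g\to R,\qquad a\mapsto aX_g^\ast,\]
which is injective: if $aX_g^\ast=0$ then $a(X_g^\ast X_g)=0$, and applying the involution (which fixes $X_g^\ast X_g\in R$) yields $(X_g^\ast X_g)\, a^\ast=0$, forcing $a^\ast=0$ by torsion-freeness. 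Thus $\CA_g$ embeds as a nonzero left $R$-submodule of $R$, and uniformity descends from $R$ to submodules; the right-uniform case uses $\psi(a)=X_g^\ast a$ in place of $\phi$. The main bookkeeping obstacle is the explicit normal-form computation that $X_gX_g^\ast$ and $X_g^\ast X_g$ lie in $R$ as products of regular elements $\si_v^{\pm}(t_i)$, but once this induction on $\ell$ is carried out, the rest of the argument is formal.
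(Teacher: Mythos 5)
Your proposal is correct, and for part~(iii) it takes a genuinely different route from the paper. Parts~(i) and~(ii) follow the paper closely: the paper simply cites~\cite[Cor.~3.3]{Har2006} for~(i), and your argument for~(ii) --- annihilate a single regular element $b_g b_g^\ast\in R$ obtained by contracting a monomial against its involution --- is exactly the paper's. Your reduction sketch for~(i) is a little breezy (commuting same-signed letters $X_i^+,X_j^+$ is \emph{not} available in a TGWC, so one must argue more carefully that an innermost opposite-sign pair $X_i^+\cdots X_i^-$ can always be brought together using only $[X_i^\pm,X_j^\mp]=0$), but that combinatorial fact is exactly the content of the cited corollary.

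For part~(iii), the two arguments go in opposite directions. The paper finds the free cyclic submodule $M_g=RX_1^{g_1}\cdots X_n^{g_n}\cong R$ \emph{inside} $\CA_g$ and checks, monomial by monomial, that every nonzero $R$-submodule of $\CA_g$ meets $M_g$: any element can be multiplied by a regular polynomial in the $\si^v(t_i)$ (coming from the exchange relation) so as to land in $M_g$; since $M_g$ is uniform, so is $\CA_g$. You instead embed $\CA_g$ \emph{into} $R$ via $\phi(a)=aX_g^\ast$, and get injectivity for free once you know $\CA$ is $R$-torsion-free --- which you correctly extract from Theorem~\ref{thm:TGWA-gradation-radical} ($\CI=T_R(\tilde{\CA})$, and if $ra\in\CI$ with $r$ regular then $a\in\CI$). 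A nonzero left $R$-submodule of a uniform module is uniform, so $\CA_g$ is uniform. Your route is arguably cleaner, since it avoids the element-by-element reordering argument, at the cost of invoking Theorem~\ref{thm:TGWA-gradation-radical}. One small simplification: the detour through the involution is unnecessary --- from $aX_g^\ast X_g=0$ you can directly invoke right $R$-torsion-freeness (also given by Theorem~\ref{thm:TGWA-gradation-radical}) to conclude $a=0$, since $X_g^\ast X_g$ is a central regular element of $R$.
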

\begin{proof}
Statement \eqref{it:PIDCGR-fg} was proved in \cite[Cor.~3.3]{Har2006}.
For \eqref{it:PIDCGR-ff}, if $r\CA_g=0$ then in particular $rb_gb_g^\ast=0$ where $b_g\in\CA_g$ is a product of generators $X_i^\pm$. Since $b_gb_g^\ast$ can be simplified to a product of elements of the form $\si_1^{d_1}\cdots\si_n^{d_n}(t_i)$ which are regular in $R$, this implies $r=0$. Similarly $\CA_gr=0\Rightarrow r=0$.

To prove \eqref{it:PIDCGR-unif}, suppose $R$ is uniform as a left $R$-module. Write $g=(g_1,g_2,\ldots,g_n)$ and let $M_g=RX_1^{g_1}X_2^{g_2}\cdots X_n^{g_n}\subseteq \CA_g$ where we put  $X_i^k=(X_i^{\sgn(k)})^{|k|}$. Since $t_i\in R$ is regular for all $i$, $X_1^{g_1}X_2^{g_2}\cdots X_n^{g_n}$ is regular in $\CA$, hence $M_g\simeq R$ as a left $R$-module. In particular $M_g$ is uniform. Thus it suffices to show that any $R$-submodule of $\CA_g$ has nonzero intersection with $M_g$.
Let $a_g\in \CA_g$ be any nonzero element. We must show that $Ra_g\cap M_g\neq 0$.
Write $a_g=b_1+b_2+\cdots+b_s$ where each $b_i$ is a (nonzero) monomial of the form 
 $r X_{i_1}^{\ep_1}X_{i_2}^{\ep_2}\cdots X_{i_k}^{\ep_k}$
where $r\in R$, $i_j\in\{1,2,\ldots,n\}$ and $\ep_j\in\{\pm\}$.
Using the commutation relations \eqref{eq:TGWA-Rels} and the exchange relation \eqref{eq:TGWA-exchange-relation}, the generators $X_j^\pm$ in each $b_i$ can be rearranged, provided we multiply by a suitable product $p_i$ of elements of the form $\si_1^{d_1}\cdots\si_n^{d_n}(t_e)$. Thus for each such monomial $b_i$ there is a regular element $p_i\in R$ such that $p_ib_i \in M_g$. Let $p=p_1p_2\cdots p_s$. Then $pa_g\in M_g$, and $pa_g\neq 0$ since $p$ is regular. Thus $Ra_g\cap M_g\neq 0$ as required.
\end{proof}

\begin{Theorem} \label{thm:PIDCGR}
Let $\CA=\CA(R,\Bsi,\Bt)$ be a TGWA where $R$ is a PID. Then $\CA$ is a CGR.
\end{Theorem}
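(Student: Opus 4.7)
The plan is to combine Lemma \ref{lem:PIDCGR} with the structure theorem for finitely generated modules over a PID, using the twist automorphism $\si_g$ at the end to reconcile the left and right module structures on each graded piece $\CA_g$.

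First I will observe that any PID is an integral domain and hence uniform as a module over itself: any two nonzero ideals of $R$ have nonzero product, hence nonzero intersection. So the hypothesis of Lemma \ref{lem:PIDCGR}\eqref{it:PIDCGR-unif} is satisfied on both sides, and together with parts \eqref{it:PIDCGR-fg}--\eqref{it:PIDCGR-ff} we conclude that every homogeneous component $\CA_g$ is finitely generated, faithful, and uniform as a left $R$-module (and likewise as a right $R$-module).

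Next I will invoke the structure theorem: a finitely generated $R$-module decomposes as $R^k \oplus \bigoplus_i R/(p_i^{e_i})$ with the $p_i$ prime. Uniformity forces this decomposition to be indecomposable, since in any nontrivial direct sum the two summands meet in zero; so $\CA_g$ is isomorphic either to $R$ or to a torsion piece $R/(p^e)$. Faithfulness rules out the torsion case, leaving $\CA_g \cong R$ as a left $R$-module. This produces an element $a_g \in \CA_g$ with $\CA_g = R a_g$ and $r a_g = 0 \Rightarrow r = 0$.

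The only nontrivial remaining point, and the main obstacle, is to verify that the same element $a_g$ is a free \emph{right} $R$-basis. For this I will set $\si_g = \si_1^{g_1} \circ \cdots \circ \si_n^{g_n} \in \Aut_\K(R)$ for $g = (g_1,\ldots,g_n)$ and derive from the defining relations $X_i^\pm r = \si_i^{\pm 1}(r) X_i^\pm$, by induction on the length of a monic monomial and $R$-linearity (using commutativity of $R$), the identity $h r = \si_g(r) h$ for every $h \in \CA_g$ and every $r \in R$. Applied to $a_g$ this gives
\[
a_g R = \si_g(R) a_g = R a_g = \CA_g,
\]
so $a_g$ generates $\CA_g$ as a right $R$-module; injectivity of $r \mapsto a_g r$ follows because $a_g r = \si_g(r) a_g = 0$ forces $\si_g(r) = 0$ by left-freeness, and then $r = 0$ since $\si_g$ is an automorphism. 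The lemma supplies the three required properties independently on each side, but \emph{a priori} a left cyclic generator need not also be a right cyclic generator; it is precisely the twist automorphism $\si_g$ that identifies the two cyclic structures via $Ra_g = a_g R$.
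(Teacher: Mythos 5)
Your proof is correct and follows essentially the same route as the paper: both apply Lemma \ref{lem:PIDCGR} together with the structure theorem for finitely generated modules over a PID to conclude $\CA_g \cong R$ as a left module, and both then use the commutation relation $a_g r = \si_g(r) a_g$ to transfer the left $R$-basis to a right $R$-basis. Your write-up is slightly more explicit about why $a_g r = \si_g(r)a_g$ holds for arbitrary (not just monomial) elements and about the injectivity of $r \mapsto a_g r$, but the argument is the same.
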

\begin{proof}
Let $g\in G$. By Lemma \ref{lem:PIDCGR}\eqref{it:PIDCGR-fg}, $\CA_g$ is finitely generated as a left $R$-module.
Thus, since $R$ is a PID,
$\CA_g\simeq R/(f_1)\oplus R/(f_2)\oplus\cdots\oplus R/(f_k)$
for some $f_i\in R$.
By Lemma \ref{lem:PIDCGR}\eqref{it:PIDCGR-unif}, $\CA_g$ is uniform as a left $R$-module, hence $k=1$.
By Lemma \ref{lem:PIDCGR}\eqref{it:PIDCGR-ff}, $f_1=0$. Hence $\CA_g\simeq R$ as a left $R$-module. Since $a_g r = \si_g(r)a_g$ for all $r\in R$, $a_g\in \CA_g$, a basis element for $\CA_g$ as a left $R$-module will also be a basis element for $\CA_g$ as a right $R$-module. Thus $\CA$ is a CGR.
\end{proof}

\begin{Corollary}\label{cor:PIDCGR}
Any noncommutative Kleinian fiber product $\CA_{\al_1,\al_2}(p_1,p_2)$ is a CGR.
\end{Corollary}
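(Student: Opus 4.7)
The plan is to deduce this corollary almost immediately by chaining together two results established earlier in the section. First, I would invoke Corollary \ref{cor:fiber-TGWA}, which identifies any noncommutative Kleinian fiber product $\CA_{\al_1,\al_2}(p_1,p_2)$ with a rank-two TGWA $\CA(R,\Bsi,\Bt)$ whose base ring is $R=\C[u]$, with $\Bsi=(\si_1,\si_2)$ acting by $\si_i(u)=u-\al_i$ and $\Bt=(p_1,p_2)$. Second, since $\C[u]$ is a principal ideal domain, Theorem \ref{thm:PIDCGR} applies directly and shows that this TGWA is a CGR. Concatenating these two steps yields the statement.

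The only point worth double-checking in executing the plan is that the standing hypotheses of Theorem \ref{thm:PIDCGR} are satisfied. The theorem requires that $t_1,\ldots,t_n$ be regular (non-zerodivisors) in $R$ and that the TGWA consistency equations \eqref{eq:TGWA-Consistency-Rels} hold. Here $t_i=p_i\in\C[u]\setminus\{0\}$ are nonzero polynomials in an integral domain, hence regular; and consistency is exactly the Mazorchuk--Turowska Equation \eqref{eq:MTeq}, which holds by our standing assumption (Proposition \ref{prp:NKFP-Consistency}). Note that in rank two there are only binary consistency conditions; the ternary ones in \eqref{eq:TGWA-Consistency-Rel2} are vacuous.

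There is really no obstacle here: the entire technical content of the corollary is already packaged into Theorem \ref{thm:PIDCGR}, whose proof used Lemma \ref{lem:PIDCGR} (finite generation, faithfulness and uniformity of each graded piece $\CA_g$ as an $R$-module) together with the structure theorem for finitely generated modules over a PID to reduce $\CA_g$ to a free $R$-module of rank one, and the twisting identity $a_g r=\si_g(r)a_g$ to upgrade a left $R$-basis to a simultaneous two-sided basis. Thus the proof of Corollary \ref{cor:PIDCGR} is a one-liner: apply Theorem \ref{thm:PIDCGR} with $R=\C[u]$ via the identification provided by Corollary \ref{cor:fiber-TGWA}.
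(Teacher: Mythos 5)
Your proposal is correct and matches the paper's intent exactly: the corollary is stated in the paper without a separate proof, as it is an immediate consequence of Theorem \ref{thm:PIDCGR} applied via the identification in Corollary \ref{cor:fiber-TGWA} with $R=\C[u]$ a PID. Your verification that the standing hypotheses (regularity of the $t_i=p_i$ and the consistency equations, which in rank two reduce to the MTE) hold is a careful and appropriate addition.
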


\begin{Example}\label{ex:PIDCGR}
Let $(m,n)=(2,1)$ and let $\mathscr{L}$ be as in Figure \ref{fig:21-single} where $\la=0$. Choose $(\al_1,\al_2)=(-1,2)$.
Then
\[P^\mathscr{L}_1(u)=u-\tfrac{-1}{2}\al_1=u-\tfrac{1}{2},\quad
P^\mathscr{L}_2(u)=(u-\tfrac{1}{2}\al_2)\big(u-(\al_1+\tfrac{1}{2}\al_2)\big)=(u-1)u.\]
Put $\CA=\CA(\mathscr{L})=\CA_{-1,2}\big(u-\tfrac{1}{2},\, (u-1)u\big)$ and identify $R=\C[H]$.
We find a generator $a_g$ for $\CA_g$ as a left and right $R$-module in the case when $g=(1,1)$. Take
\[a_g=\frac{1}{2}(X_1^+X_2^+-X_2^+X_1^+).\]
Using the exchange relation \eqref{eq:TGWA-exchange-relation} which in this instance can be written
\begin{equation}
X_2^+X_1^+(H+1)-X_1^+X_2^+(H-1)=0,
\end{equation}
one checks that
\[a_g \cdot (H+1) = X_1^+X_2^+,\quad a_g\cdot (H-1) = X_2^+X_1^+\]
Since $\{X_1^+X_2^+,\, X_2^+X_1^+\}$ generates $\CA_g$ as a right $R$-module, this proves that $\CA_g=a_g\cdot R$. As in the proof of Theorem \ref{thm:PIDCGR}, then automatically $\CA_g=R\cdot a_g$.
\end{Example}

\subsection{Rescaling isomorphisms in rank two}
In this subsection we show that for rank two TGWAs we can rescale the elements $t_i$ by central invertible elements from $R$, provided they solve the binary consistency relation \eqref{eq:TGWA-Consistency-Rel1}. 
This will be applied in the proof of the decomposition theorem for the category of weight modules, Theorem \ref{thm:A}.

\begin{Proposition} \label{prp:rank-2-isomorphisms}
Let $\CA=\CA(R,\Bsi,\Bt)$ and $\CA'=\CA(R,\Bsi,\Bt')$ be two TGWAs of rank two such that
$t_i'=s_i t_i$ for $i=1,2$, where $(s_1,s_2)$ is a pair of central invertible elements of $R$ that solve Equation \eqref{eq:TGWA-Consistency-Rel1} with respect to $(\si_1^{1/2},\si_2^{1/2})$.
Then $\CA\simeq \CA'$ as $\Z^2$-graded $R$-rings.
\end{Proposition}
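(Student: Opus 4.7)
The plan is to exhibit an $R$-ring isomorphism already at the level of TGWCs by an asymmetric rescaling of the canonical generators, and then invoke Corollary \ref{cor:R-ring-Isomorphism} to descend to the TGWAs. Writing $X_i^\pm$ for the generators of $\tilde{\CA}(R,\Bsi,\Bt)$ and $Y_i^\pm$ for those of $\tilde{\CA}(R,\Bsi,\Bt')$, I will define a candidate $R$-algebra homomorphism $\tilde{\phi}: \tilde{\CA}(R,\Bsi,\Bt') \to \tilde{\CA}(R,\Bsi,\Bt)$ which is the identity on $R$ and sends
\[
Y_1^+ \longmapsto \sigma_1^{1/2}(s_1)\, X_1^+, \qquad Y_1^- \longmapsto X_1^-, \qquad Y_2^+ \longmapsto X_2^+, \qquad Y_2^- \longmapsto \sigma_2^{-1/2}(s_2)\, X_2^-.
\]
The central obstacle is that a naive symmetric rescaling (say, placing $\sigma_i^{1/2}(s_i)$ in front of each $X_i^+$) forces conditions of the form $\sigma_j(s_i)=s_i$ which are strictly stronger than binary consistency; the asymmetric distribution above is tailored precisely so that the nontrivial cross relation collapses to \eqref{eq:TGWA-Consistency-Rel1}.

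To see that $\tilde{\phi}$ is well-defined, I verify that the proposed images satisfy the defining TGWC relations \eqref{eq:TGWA-Rels} for $\Bt'$. The semicommutation $Y_i^\pm r = \sigma_i^{\pm 1}(r)Y_i^\pm$ is automatic because the rescaling factors lie in $Z(R)$. The relations $Y_i^+Y_i^- = \sigma_i^{1/2}(t_i')$ and $Y_i^-Y_i^+ = \sigma_i^{-1/2}(t_i')$ each reduce to the identity $t_i' = s_i t_i$ after one use of $X_i^\pm r = \sigma_i^{\pm 1}(r)X_i^\pm$. The relation $[Y_2^+, Y_1^-]= [X_2^+, X_1^-]=0$ is trivial. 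The crucial check $[Y_1^+, Y_2^-] = 0$, after sliding central scalars past $X_i^\pm$ and using $[X_1^+, X_2^-]=0$, reduces to the scalar identity
\[
\sigma_1^{1/2}(s_1)\, \sigma_1\sigma_2^{-1/2}(s_2) \;=\; \sigma_2^{-1/2}(s_2)\, \sigma_2^{-1}\sigma_1^{1/2}(s_1)
\]
in $R$, and applying $\sigma_1^{-1/2}\sigma_2^{1/2}$ to both sides transforms it into exactly the binary consistency equation \eqref{eq:TGWA-Consistency-Rel1} for $(s_1,s_2)$, which holds by hypothesis.

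To produce the two-sided inverse, I rerun the same construction with $(s_1,s_2)$ replaced by $(s_1^{-1},s_2^{-1})$; note that \eqref{eq:TGWA-Consistency-Rel1} is invariant under inversion of a solution, so $(s_1^{-1},s_2^{-1})$ is again a valid pair. This produces an $R$-algebra map $\tilde{\psi}: \tilde{\CA}(R,\Bsi,\Bt) \to \tilde{\CA}(R,\Bsi,\Bt')$ in the reverse direction, and direct substitution shows that $\tilde{\phi}\circ\tilde{\psi}$ and $\tilde{\psi}\circ\tilde{\phi}$ fix every generator, so the two are mutually inverse. Because all rescaling factors are homogeneous of degree zero, $\tilde{\phi}$ is a $\Z^2$-graded $R$-ring isomorphism of TGWCs, and Corollary \ref{cor:R-ring-Isomorphism} promotes it to the desired $\Z^2$-graded $R$-ring isomorphism $\CA(R,\Bsi,\Bt') \simeq \CA(R,\Bsi,\Bt)$.
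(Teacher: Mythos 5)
Your proof is correct and follows exactly the paper's argument: you define the same asymmetric rescaling map $Y_1^+\mapsto\si_1^{1/2}(s_1)X_1^+$, $Y_1^-\mapsto X_1^-$, $Y_2^+\mapsto X_2^+$, $Y_2^-\mapsto\si_2^{-1/2}(s_2)X_2^-$, verify it preserves the TGWC relations by reducing the cross-commutator to \eqref{eq:TGWA-Consistency-Rel1}, and invoke Corollary \ref{cor:R-ring-Isomorphism} to descend to the TGWAs. The only cosmetic difference is that you construct the inverse explicitly via $(s_1^{-1},s_2^{-1})$ where the paper simply remarks that invertibility of the $s_i$ makes $\varphi$ an isomorphism.
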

\begin{proof}
Let $\tilde{\CA}=\tilde{\CA}(R,\Bsi,\Bt)$ and $\tilde{\CA}'=\tilde{\CA}(R,\Bsi,\Bt')$ be the corresponding TGWCs. We show that there exists a $\K$-algebra homomorphism
\[\varphi:\tilde{\CA}'\to\tilde{\CA}\]
determined by
\begin{gather*}
  \varphi(r) =r\quad\forall r\in R,    \\ 
\begin{alignedat}{2}
\varphi(X_1^+)&=\si_1^{1/2}(s_1) X_1^+, &\qquad 
\varphi(X_1^-)&=X_1^-,\\
\varphi(X_2^+)&=X_2^+,    &\qquad 
\varphi(X_2^-)&=\si_2^{-1/2}(s_2) X_2^-.
\end{alignedat}
\end{gather*}
For this, we need to show that the defining relations for $\tilde{\CA}'$ are preserved.
In all four cases $i\in\{1,2\}$ and $\pm\in\{+,-\}$ one checks that we have 
\[\varphi(X_i^\pm)\varphi(X_i^\mp)= \varphi\big(\si_i^{\pm 1/2} ( t_i') \big).\]
In addition, the fact that we placed the $s_i$ asymmetrically means that
\[ [\varphi(X_1^-),\varphi(X_2^+)]= 0\]
trivially, while
\[ [\varphi(X_1^+),\varphi(X_2^-)]=\si_1^{1/2}\si_2^{-1/2}\big(\si_2^{1/2}(s_1)\si_1^{1/2}(s_2)-\si_2^{-1/2}(s_1)\si_1^{-1/2}(s_2)\big)\cdot X_1^+X_2^-\]
which is zero exactly because $(s_1,s_2)$ solve the binary consistency equation \eqref{eq:TGWA-Consistency-Rel1}.
The final relation is easy to check since the $s_i$ are central in $R$:
\[\varphi(X_i^\pm)\varphi(r)=\varphi\big(\si_i^{\pm 1}(r)\big)\varphi(X_i^\pm). \] 
This proves that $\varphi$ is well-defined. Since $s_i$ are invertible in $R$ it is easy to see that $\varphi$ is an isomorphism. Furthermore, it is an isomorphism of $R$-rings, since $\varphi(r)=r$ for $r\in R$. Thus by Corollary \ref{cor:R-ring-Isomorphism}, $\CA\simeq \CA'$ as $R$-rings. Since the $s_i$ have degree zero, this is also an isomorphism of $\Z^2$-graded algebras.
\end{proof}

\subsection{Weight modules without inner breaks}
\label{sec:no-inner-breaks}

Let $\CA=\CA(R,\boldsymbol{\si},\mathbf{t})$ be a TGWA. From now on we assume that $R$ is commutative. Let $\Specm(R)$ be the set of maximal ideals of $R$.

In this section we first show that in the Levi-type decomposition $\CC_\Fm=\CT_\Fm\oplus \CJ_\Fm$ of certain subquotients $\CC_\Fm$ of $\CA$ as given in \cite{Har2006}, the subspace $\CJ_\Fm$ coincides with the gradation radical of $\CC_\Fm$. Secondly we give a new characterization of weight modules without inner breaks as those whose weight spaces are annihilated by $\CJ_\Fm$. Lastly, we recall and clarify the classification of simple weight modules without inner breaks from \cite{Har2006}. 

\begin{Definition}
An $\CA$-module $M$ is called a \emph{weight module} if
\begin{equation}
M=\bigoplus_{\Fm\in \Specm(R)} M_\Fm,\qquad M_\Fm=\{v\in M\mid \Fm v=0\}.
\end{equation}
An element $\Fm\in\Specm(R)$ is called a \emph{weight} for $M$ if $M_\Fm\neq\{0\}$. The set of weights for $M$ is called the \emph{support} of $M$ and is denoted $\Supp(M)$.
\end{Definition}

By \eqref{eq:TGWA-Rels}, we have $X_i^\pm M_\Fm\subseteq M_{\si_i^{\pm 1}(\Fm)}$ which generalizes to
\begin{equation}\label{eq:AgMm}
\CA_g M_\Fm \subseteq M_{\si_g(\Fm)}
\end{equation}
where 
$\si_g = \si_1^{g_1}\circ\si_2^{g_2}\circ\cdots\circ\si_n^{g_n}$ for $g=(g_1,g_2,\ldots,g_n)\in G=\Z^n$.
For $\Fm\in\Specm(R)$, let $G_\Fm=\Stab_{G}(\Fm)$ denote the corresponding stabilizer subgroup of $G$, consisting of all $g\in G$ such that $\si_g(\Fm)=\Fm$.
Since $G$ is abelian, $G_\Fm=G_\Fn$ if $\Fm$ and $\Fn$ belong to the same $G$-orbit in $\Specm(R)$.
\begin{Definition}\label{def:cyclic-subalgebra}
The \emph{cyclic subalgebra of $\CA$ at $\Fm$} is defined as
\begin{equation}\label{eq:Cm-def}
\mathcal{C}(\Fm)=\bigoplus_{g\in G_\Fm} \CA_g.
\end{equation}
By definition, $\mathcal{C}(\Fm)$ is a graded algebra with gradation group $G_\Fm$, and $R=\CA_0\subseteq \CC(\Fm)$.
The \emph{cyclic subquotient of $\CA$ at $\Fm$} is defined to be the quotient algebra
\begin{equation}
\CC_\Fm = \CC(\Fm)/\CC(\Fm)\Fm\CC(\Fm),
\end{equation}
which inherits a $G_\Fm$-gradation since $\Fm\subseteq \CC(\Fm)_0$.
\end{Definition}
The terminology comes from Lie theory, where the centralizer of a Cartan subalgebra, $U(\Fg)^{\Fh}$, is called the cyclic subalgebra of $U(\Fg)$.
Since $\si_g(\Fm)=\Fm$ for all $g\in G_\Fm$, we have $\CC(\Fm)\Fm=\CC(\Fm)\Fm \CC(\Fm)=\Fm \CC(\Fm)$.
Thus, by \eqref{eq:AgMm} and that $\Fm M_\Fm=0$, every weight space $M_\Fm$ is a $\CC_\Fm$-module.

Let
\begin{equation} \label{eq:Hm-Def}
H_\Fm= \big\{g\in G_\Fm\;|\; \exists a\in \CA_g: a^\ast\cdot a \notin \Fm\big\}.
\end{equation}
As shown in \cite{Har2006}, $H_\Fm$ is a subgroup of $G_\Fm$. Thus we have a corresponding decomposition
\begin{equation} \label{eq:Cm-Decomposition}
 \CC_\Fm = \CT_\Fm \oplus \CJ_\Fm,\qquad 
\CT_\Fm = \bigoplus_{g\in H_\Fm} (\CC_\Fm)_g,\qquad 
\CJ_\Fm = \bigoplus_{g\in G_\Fm, g\notin H_\Fm} (\CC_\Fm)_g,
\end{equation}
where $\CT_\Fm$ is a subalgebra of $\CC_\Fm$, and $\CJ_\Fm$ is a subspace that will turn out to be an ideal.
To state the next theorem we need a definition.
\begin{Definition}\label{def:gradation-radical}
Let $\Ga$ be a group and $e\in\Ga$ the identity.
The \emph{gradation form} 
of a $\Ga$-graded algebra $D=\bigoplus_{g\in \Ga} D_g$ is the $\Z$-bilinear map
$\ga:D\times D\to D_e$ given by
\begin{equation}
\ga(x,y)=\mathrm{Proj}_e (xy), \qquad x,y\in D,
\end{equation}
where $\mathrm{Proj}_e:D\to D_e$, $\sum_g x_g\mapsto x_e$ is the projection onto $D_e$ relative
to the direct sum $D=\bigoplus_{g\in \Ga} D_g$.
The \emph{gradation radical} of $D$ is the ideal
\begin{equation}
\mathrm{gRad}(D)=\big\{x\in D\mid \forall y\in D: \ga(x,y)=0=\ga(y,x)\big\}.
\end{equation}
\end{Definition}

\begin{Theorem} \label{thm:TmJm}
Let $\CA=\CA(R,\Bsi,\Bt)$ be a TGWA and let $\Fm\in\Specm(R)$.
Then
\begin{enumerate}[{\rm (a)}]
\item $\CT_\Fm$ is a crossed-product algebra over $R/\Fm$ with respect to the group $H_\Fm$,
\item $\CJ_\Fm$ coincides with the gradation radical of the $G_\Fm$-graded algebra $\CC_\Fm$.
\end{enumerate}
\end{Theorem}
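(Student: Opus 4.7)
The plan is to establish both parts in tandem, beginning by unwinding the cyclic subquotient. Since $\si_g(\Fm)=\Fm$ for $g\in G_\Fm$ and $\CA_g\cdot r=\si_g(r)\cdot \CA_g$, we have $\CC(\Fm)\Fm\CC(\Fm)=\bigoplus_{g\in G_\Fm}\Fm\CA_g$, so each homogeneous piece is $(\CC_\Fm)_g\cong \CA_g/\Fm\CA_g$, with $(\CC_\Fm)_0=R/\Fm$ a field.

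For part (a), I would choose, for each $g\in H_\Fm$, a representative $a_g\in \CA_g$ with $a_g^\ast a_g\notin \Fm$. Both $a_g^\ast a_g$ and $a_g a_g^\ast=\si_g(a_g^\ast a_g)$ are then units in $R/\Fm$, and a direct check using $a_g r=\si_g(r)a_g$ shows that (the image of) $(a_g^\ast a_g)^{-1} a_g^\ast$ is a two-sided inverse of $a_g$ in $\CC_\Fm$. Right multiplication by this inverse yields a bijection $(\CC_\Fm)_g\to(\CC_\Fm)_0=R/\Fm$, so $(\CC_\Fm)_g$ is free of rank one over $R/\Fm$ on the basis $a_g$ (as both a left and a right module, with the expected $\si_g$-twist relating the two). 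The products $a_g a_h=c_{g,h}\,a_{g+h}$ with $c_{g,h}\in (R/\Fm)^\times$ then exhibit $\CT_\Fm$ as a crossed-product algebra of $R/\Fm$ by $H_\Fm$.

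For part (b), since $\mathrm{gRad}(\CC_\Fm)$ is automatically a graded ideal, it suffices to compare the two sides in each degree $g\in G_\Fm$. If $g\in H_\Fm$, any nonzero $x=r a_g\in (\CC_\Fm)_g$ satisfies $\ga(x,a_g^{-1})=r\neq 0$, so $(\CC_\Fm)_g\cap \mathrm{gRad}(\CC_\Fm)=0$, yielding $\mathrm{gRad}(\CC_\Fm)\subseteq \CJ_\Fm$. For the reverse inclusion, take $g\notin H_\Fm$, $a\in \CA_g$, and $b=d^\ast\in \CA_{-g}$ with $d\in \CA_g$ (using that $\ast$ interchanges $\CA_{\pm g}$). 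Since $a+d\in \CA_g$ and $g\notin H_\Fm$, $(a+d)^\ast(a+d)\in\Fm$ and also $(a+d)(a+d)^\ast=\si_g\bigl((a+d)^\ast(a+d)\bigr)\in\Fm$; expanding the latter and subtracting the diagonal contributions $aa^\ast,dd^\ast\in \Fm$ gives $ad^\ast+da^\ast\in \Fm$. The element $ad^\ast\in R$ is fixed by the involution and $(ad^\ast)^\ast=da^\ast$, so $ad^\ast=da^\ast$, leaving $2 ab=2 ad^\ast\in \Fm$, whence $ab\in \Fm$. Thus $(\CC_\Fm)_g\cdot (\CC_\Fm)_{-g}=0$ in $R/\Fm$, so $(\CC_\Fm)_g\subseteq \mathrm{gRad}(\CC_\Fm)$.

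The main obstacle is this final polarization step, which upgrades the diagonal hypothesis ``$c^\ast c\in \Fm$ for all $c\in \CA_g$'' to the off-diagonal vanishing $\CA_g\cdot \CA_{-g}\subseteq \Fm$. The essential feature making polarization work is that the involution fixes $R$ pointwise and reverses the grading, forcing the off-diagonal terms to be self-adjoint in $R$. The argument requires $2$ to be invertible in $\K$, which is automatic for the intended applications over $\C$.
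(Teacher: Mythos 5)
Your part (a) is essentially the paper's argument. Your part (b) takes a genuinely different route, and it is worth comparing. The paper proves the stronger claim that $(\CC_\Fm)_{g_1}(\CC_\Fm)_{g_2}=0$ whenever $g_1,g_2\in G_\Fm\setminus H_\Fm$ and $g_1+g_2\in H_\Fm$, by writing $b_1b_2\equiv r a_h$ with $a_h$ invertible and extracting $r\in\Fm$. The key step there, that ``$(a_h'b_1)b_2\in\Fm$ since $a_h'b_1\in\CA_{-g_2}$ and $g_2\notin H_\Fm$'', is precisely the assertion you are trying to establish by polarization: it is not immediate from the definition of $H_\Fm$, which constrains only $c^\ast c$ rather than arbitrary products $\CA_{-g}\cdot\CA_g$, and the paper defers the justification to \cite{Har2006}. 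Your polarization argument supplies an explicit, self-contained proof of this, which is a real gain in transparency. What it costs is that you need $2\in\K^\times$, whereas the paper states the theorem over an arbitrary field $\K$, so strictly speaking your proof establishes a slightly weaker statement. You flag this, which is appropriate, but it does mean your argument does not fully reprove the theorem as stated. You also do not gain the ideal property of $\CJ_\Fm$ directly as the paper does, though this is harmless since $\mathrm{gRad}$ is automatically an ideal.

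Two smaller remarks. First, the identity $(a+d)(a+d)^\ast=\si_g\bigl((a+d)^\ast(a+d)\bigr)$ uses the trace-like relation of \cite[Lem.~5.1]{FutHar2012a}, which is stated for $a$ a monic monomial; it does extend to arbitrary $a\in\CA_g$ by $R$-linearity, but you should say so. Alternatively, note that $-g\notin H_\Fm$ (since $H_\Fm$ is a subgroup) and apply the definition of $H_\Fm$ directly to $(a+d)^\ast\in\CA_{-g}$, which gives $(a+d)(a+d)^\ast\in\Fm$ without the trace lemma. Second, you only verify $\ga(x,y)=0$ in the reverse inclusion; one also needs $\ga(y,x)=0$, i.e. $d^\ast a\in\Fm$, which follows either by the symmetric expansion of $(a+d)^\ast(a+d)$ or from $d^\ast a=\si_{-g}(ad^\ast)\in\si_{-g}(\Fm)=\Fm$.
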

\begin{proof}
(a) Each graded component of $\CT_\Fm$ contains an invertible element by definition of $H_\Fm$, hence $\CT_\Fm$ is a crossed-product algebra. See also \cite[Thms.~4.5,4.8]{Har2006}.

(b) First we show that if $g_1,g_2\in G_\Fm\setminus H_\Fm$ are such that $g_1+g_2\in H_\Fm$, then $\CA_{g_1}A_{g_2}\in \Fm \CA$, and consequently $(\CC_\Fm)_{g_1} (\CC_\Fm)_{g_2} = 0$.
For $i=1,2$, let $b_i\in \CA_{g_i}$. Then $b_1b_2\in \CA_h$, where $h=g_1+g_2\in H_\Fm$. Thus $b_1b_2=r a_h$ (mod $\CA\Fm$) for some $r\in R$ and some $a_h\in \CA_h$ with $a_h' a_h\notin \Fm$ for some $a_h'\in \CA_{-h}$.
We have
$a_h' b_1 b_2 = (a_h' b_1) b_2 \in \Fm $ since $a_h' b_1\in \CA_{-g_2}$ and $g_2\notin H_\Fm$. On the other hand, $a_h' b_1b_2= a_h'  r a_h=\si_{-h}(r)a_h' a_h$. Since $a_h' a_h\notin\Fm$ and $\Fm$ is maximal hence prime, $r\in\si_h(\Fm)=\Fm$. Thus $b_1b_2\in \Fm \CA$.

Now it follows directly that $\CJ_\Fm$ is an ideal and that $\CJ_\Fm\subseteq \mathrm{gRad}(\CC_\Fm)$.
Conversely, suppose that $b\in \mathrm{gRad}(\CC_\Fm)$. Since the gradation radical is a graded subspace of $\CC_\Fm$, we may without loss of generality assume that $b\in (\CC_\Fm)_g$ for some $g\in G_\Fm$. Since $b$ is in the gradation radical, $b' b=0$ for all $b'\in (\CC_\Fm)_{-g}$. By definition of $H_\Fm$ this implies that $g\notin H_\Fm$. Thus $b\in \CJ_\Fm$.
\end{proof}

Next we turn to weight modules without inner breaks. These were first introduced, under some  restrictions on the support, in \cite{MazTur1999,MazPonTur2003} and without restrictions in \cite{Har2006}.
Furthermore, in \cite{Har2006}, the simple weight modules without inner breaks were classified. 
We show that the class of weight modules without inner breaks can be naturally be described as those simple weight modules
where each weight space $M_\Fm$ is annihilated by the gradation radical of the cyclic subquotient $\CC_\Fm$.
This gives a different way to understand the notion of inner breaks. 

\begin{Definition}[\cite{Har2006}]
Let $M$ be a simple weight module over a TGWA $\CA$.
We say that $M$ has \emph{no inner breaks}
if for any $\Fm\in\Supp(M)$ and any monomial $a=rX_{i_1}^{\ep_1}X_{i_2}^{\ep_2}\cdots X_{i_k}^{\ep_k}\in \CA$
(where $r\in R, i_j\in\{1,2,\ldots,n\},\ep_j\in\{\pm\}$) such that $aM_\Fm\neq 0$
 we have $a^\ast\cdot  a\notin\Fm$.
\end{Definition}

The following lemma shows that the requirement that $a$
should be a monomial can be relaxed to being homogenous (with respect to the $G$-gradation on $\CA$).
\begin{Lemma} \label{lem:NIBequiv}
 Let $M$ be a simple weight module over $\CA$.
Then $M$ has no inner breaks
iff for any $\Fm\in\Supp(M)$ and any homogenous $a\in \CA$
with $aM_\Fm\neq 0$ we have $a^\ast\cdot a\notin\Fm$.
\end{Lemma}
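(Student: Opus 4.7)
The backward direction will be immediate because every monomial is a homogeneous element of $\CA$, so my attention goes to the forward direction. The plan is to let $a\in\CA_g$ be homogeneous with $aM_\Fm\neq 0$ and to split on whether $g\in G_\Fm$: for $g\in G_\Fm$ I will invoke the crossed-product description of $\CT_\Fm$ from Theorem \ref{thm:TmJm}(a), and the case $g\notin G_\Fm$ will be reduced to the first via simplicity of $M$.

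For $g\in G_\Fm$, I will write $a=\sum_i b_i$ as a finite sum of monomials of degree $g$ in the sense of the no-inner-breaks definition, and first argue that $g\in H_\Fm$: otherwise, by definition of $H_\Fm$, every $b\in\CA_g$ satisfies $b^*b\in\Fm$, so the hypothesis would kill each $b_iM_\Fm$ and hence $aM_\Fm$, a contradiction. Since $(\CJ_\Fm)_g=0$ for $g\in H_\Fm$ by Theorem \ref{thm:TmJm}(b), the component $(\CC_\Fm)_g$ is identified with the free rank-one $R/\Fm$-module $(R/\Fm)\bar u_g$ on a unit $\bar u_g$ with $\bar u_g^*\bar u_g\neq 0$ (Theorem \ref{thm:TmJm}(a)). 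The hypothesis $aM_\Fm\neq 0$, together with $\Fm M_\Fm=0$ and $\Fm\CA_g=\CA_g\Fm$, forces the image $\bar a\in(\CC_\Fm)_g$ to be nonzero; writing $\bar a=\bar r\bar u_g$ then gives $\bar r\neq 0$ in the field $R/\Fm$, and a short computation using $\bar u_g^*\bar r=\si_{-g}(\bar r)\bar u_g^*$ yields
\[ \bar a^*\bar a = \si_{-g}(\bar r^2)\,\bar u_g^*\bar u_g\neq 0, \]
i.e., $a^*a\notin\Fm$.

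For $g\notin G_\Fm$, I will reduce to the previous case. Picking $v\in M_\Fm$ with $w:=av\neq 0$ in $M_{\si_g(\Fm)}$, simplicity of $M$ gives $\CA w=M$, so projecting a preimage of $v$ onto homogeneous components produces $b\in\CA_h$ with $h+g\in G_\Fm$ and $bw\in M_\Fm\setminus\{0\}$. Then $(ba)v=bw\neq 0$, so $(ba)M_\Fm\neq 0$, and since $ba\in\CA_{h+g}$ is homogeneous with $h+g\in G_\Fm$, the first case yields $(ba)^*(ba)\notin\Fm$. Using the commutation rule $a^*r=\si_{-g}(r)a^*$ (inherited from $X_i^\pm r=\si_i^{\pm 1}(r)X_i^\pm$ by $R$-linearity), the identity $(ba)^*(ba)=a^*(b^*b)a=\si_{-g}(b^*b)\cdot a^*a$ holds in $R$, and primality of $\Fm$ then forces $a^*a\notin\Fm$. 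The main delicate point will be the $G_\Fm$ case, where the monomial-by-monomial hypothesis has to be matched with the crossed-product rank-one description of $(\CC_\Fm)_g$ in order to propagate nonvanishing from $\bar a$ to $\bar a^*\bar a$; the remaining case is then essentially formal.
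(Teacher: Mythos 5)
Your proof is correct, but it takes a genuinely different route from the paper's. The paper argues uniformly in the degree $g$, without distinguishing whether $g$ stabilizes $\Fm$: it picks a single monomial $a_i$ in the homogeneous expansion of $a$ with $a_iM_\Fm\neq 0$, applies the no-inner-breaks hypothesis to get $a_i^\ast a_i\notin\Fm$, deduces $aa_i^\ast\notin\si_g(\Fm)$ from $0\neq aM_\Fm\subseteq aa_i^\ast M_{\si_g(\Fm)}$, and then closes the argument in one line via the identity $(aa_i^\ast)^2=\si_g(a^\ast a)\,a_ia_i^\ast$ and primality of $\si_g(\Fm)$. You instead split on $g\in G_\Fm$ versus $g\notin G_\Fm$, invoke the crossed-product decomposition $\CC_\Fm=\CT_\Fm\oplus\CJ_\Fm$ of Theorem \ref{thm:TmJm} to force $g\in H_\Fm$ and then compute $\bar a^\ast\bar a$ in the rank-one component $(\CT_\Fm)_g$, and reduce the $g\notin G_\Fm$ case to the first via simplicity of $M$. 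Your approach is more structural — it makes explicit the role of the stabilizer and the crossed product, and shows directly how the no-inner-breaks hypothesis interacts with $H_\Fm$ — but it requires the heavier Theorem \ref{thm:TmJm} as input and is substantially longer, including extra care with the well-definedness of $\ast$ on $\CC_\Fm$ and the nonvanishing of $\bar u_g^\ast\bar u_g$ (which you handle correctly). The paper's argument is shorter, more elementary (it only uses the commutation rule, $R$-torsion regularity, and primality), and applies to all $g$ at once without the reduction step. Both proofs are valid; yours trades brevity for a clearer structural picture of where the invertibility is coming from.
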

\begin{proof} ($\Leftarrow$): Trivial because any monomial is homogenous.
($\Rightarrow$): Assume $M$ has no inner breaks.
Suppose $\Fm\in\Supp(M)$, and $a\in \CA$ is a homogenous
element with $aM_\Fm\neq 0$. Let $g\in \Z^n$ be the degree
of $a$. Write $a$ as a sum $a_1+\cdots+a_s$ of monomials
of degree $g$. Then $a_iM_\Fm\neq 0$ for at least one $i$.
Since $M$ has no inner breaks, $a_i^\ast a_i\notin\Fm$. So $a_i^\ast a_i$ acts
bijectively on $M_\Fm$. Thus
\[0\neq aM_\Fm = a a_i^\ast a_i M_\Fm \subseteq a a_i^\ast M_{\si_g(\Fm)}.\]
Thus $a a_i^\ast$, which has degree zero and thus lies in $R$,
does not lie in the maximal ideal $\si_g(\Fm)$. So $\si_g(\Fm)\not\ni (aa_i^\ast)^2=(aa_i^\ast)^\ast(aa_i^\ast)
=a_i(a^\ast a)a_i^\ast = \si_g(a^\ast a)\cdot a_ia_i^\ast$
which implies that $a^\ast a\notin \Fm$.
\end{proof}

\begin{Theorem}\label{thm:no-inner-breaks}
Let $\CA=\CA(R,\Bsi,\Bt)$ be a TGWA. For each maximal ideal $\Fm$ of $R$, let $\CJ_\Fm$ be the gradation radical of the cyclic subquotient $\CC_\Fm$ of $\CA$. Let $M$ be a simple weight $\CA$-module.
Then the following two statements are equivalent.
\begin{enumerate}[{\rm (i)}]
\item $M$ has no inner breaks;
\item $\CJ_\Fm M_\Fm=0$ for all $\Fm\in\Specm(R)$.
\end{enumerate}
\end{Theorem}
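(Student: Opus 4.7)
The approach is to prove both directions, bridging by the Levi-type decomposition $\CC_\Fm = \CT_\Fm \oplus \CJ_\Fm$ of Theorem \ref{thm:TmJm} (together with the crossed-product structure of $\CT_\Fm$) and the strengthening of NIB from monomials to arbitrary homogeneous elements provided by Lemma \ref{lem:NIBequiv}. For (i) $\Rightarrow$ (ii), I fix $\Fm \in \Specm(R)$ and assume $M$ has no inner breaks. Since $\CJ_\Fm$ is graded, it suffices to show that a homogeneous lift $b \in \CA_g$ of any element of $(\CJ_\Fm)_g$, with $g \in G_\Fm \setminus H_\Fm$, annihilates $M_\Fm$. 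But if $bM_\Fm \neq 0$, then Lemma \ref{lem:NIBequiv} gives $b^*b \notin \Fm$, which by the very definition of $H_\Fm$ would place $g \in H_\Fm$, a contradiction.

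For (ii) $\Rightarrow$ (i), I assume $\CJ_\Fm M_\Fm = 0$ for every $\Fm$, take a homogeneous $a \in \CA_g$ with $aM_\Fm \neq 0$, and aim (again via Lemma \ref{lem:NIBequiv}) to conclude $a^*a \notin \Fm$. First I handle the special case $g \in G_\Fm$. The hypothesis forces the image $\bar a \in (\CC_\Fm)_g$ to lie outside $(\CJ_\Fm)_g$, so $g \in H_\Fm$; by Theorem \ref{thm:TmJm}(a) the piece $(\CT_\Fm)_g = (\CC_\Fm)_g$ is then a free rank-one $R/\Fm$-module on the class of some $a_0 \in \CA_g$ with $a_0^*a_0 \notin \Fm$. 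Writing $a = ra_0 + c$ with $r \in R$ and $c \in \Fm\CA_g$, one has $r \notin \Fm$ (else $aM_\Fm \subseteq \Fm M_\Fm = 0$), and expanding $a^*a$ via the commutation rule $a_0^* \cdot r = \si_{-g}(r) a_0^*$ gives
\begin{equation*}
a^*a \equiv \si_{-g}(r)^2 \cdot a_0^*a_0 \pmod{\Fm},
\end{equation*}
which lies outside $\Fm$ by primality and the invariance $\si_g(\Fm) = \Fm$. For general $g$ I reduce to the preceding case by simplicity of $M$: if $v \in M_\Fm$ satisfies $av \neq 0$, then $\CA(av) = M$ yields some $b' \in \CA$ with $b'av = v$, and projecting onto the weight-$\Fm$ component of $b'av$ isolates a homogeneous summand $b \in \CA_l$ with $l + g \in G_\Fm$ and $(ba)v \neq 0$. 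Applying the special case to $ba \in \CA_{l+g}$ yields $(ba)^*(ba) = \si_{-g}(b^*b) \cdot a^*a \notin \Fm$, whence $a^*a \notin \Fm$ by primality of $\Fm$.

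The principal technical obstacle is the special case $g \in G_\Fm$ of (ii) $\Rightarrow$ (i): one must show that any $a \in \CA_g$ acting nontrivially on $M_\Fm$ descends, modulo $\Fm\CA_g$, to a unit multiple of a distinguished invertible-in-$\CT_\Fm$ generator $a_0$, and that the correction term $c \in \Fm\CA_g$ contributes only elements of $\Fm$ to the square-norm $a^*a$. Once this foothold is secured, the general case becomes a formal consequence of simplicity together with the $\si_{-g}$-twisted multiplicativity of the square-norm.
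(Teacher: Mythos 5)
Your proof is correct and follows the same overall strategy as the paper for (ii)$\Rightarrow$(i): use simplicity of $M$ to produce a homogeneous $b$ such that $ba$ lies in the cyclic subalgebra and acts nontrivially on $M_\Fm$, deduce $\deg(ba)\in H_\Fm$, and then conclude via the twisted multiplicativity $(ba)^\ast(ba)=\si_{-g}(b^\ast b)\,a^\ast a$ together with primality of $\Fm$. The one place you go further is worth flagging: the paper passes from $\deg(ba)\in H_\Fm$ to $(ba)^\ast(ba)\notin\Fm$ citing only the definition of $H_\Fm$, which a priori only guarantees that \emph{some} $a_0\in\CA_{\deg(ba)}$ satisfies $a_0^\ast a_0\notin\Fm$, not that the particular element $ba$ does. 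Your ``special case'' ($g\in G_\Fm$) --- invoking the crossed-product structure of $\CT_\Fm$ from Theorem \ref{thm:TmJm}(a) to write $a=ra_0+c$ with $r\notin\Fm$, $c\in\Fm\CA_g$, then expanding $a^\ast a\equiv\si_{-g}(r)^2\,a_0^\ast a_0\pmod{\Fm}$ --- supplies exactly the missing justification. You also give a short self-contained argument for (i)$\Rightarrow$(ii) rather than citing the earlier reference; it is correct, being the definition of $H_\Fm$ read contrapositively via Lemma \ref{lem:NIBequiv}.
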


\begin{proof} (i)$\Rightarrow$(ii): Proved in \cite[Thm.~4.5(a)]{Har2006}.

(ii)$\Rightarrow$(i):
Let $\Fm\in\Supp(M)$ and assume that $a\in\CA$ is a homogeneous element such that $aM_\Fm\neq 0$.
We must show that $a^\ast \cdot a\notin\Fm$. Since $M$ is simple, there exists a homogeneous element $b\in \CA$ such that $baM_\Fm$ is a nonzero subspace of $M_\Fm$ (otherwise $aM_\Fm$ would generate a nonzero proper $\CA$-submodule of $M$). Since $baM_\Fm$ is a nonzero subspace of $M_\Fm$, while on the other hand $baM_\Fm\subseteq M_{\si_g(\Fm)}$ where $g=\deg(ba)$, it follows that $g\in G_\Fm$. This shows that $ba$ belongs to the cyclic subalgebra $\CC$ of $\CA$. By (ii) and that $baM_\Fm\neq 0$, the image in $\CC_\Fm$ of $ba$ does not belong to $\CJ_\Fm$. Therefore $g\in H_\Fm$ by \eqref{eq:Cm-Decomposition}. By the definition \eqref{eq:Hm-Def} of $H_\Fm$ we conclude that $(ba)^\ast\cdot (ba)\notin \Fm$. Since $(ba)^\ast\cdot (ba)=a^\ast b^\ast b a = \si_h(b^\ast b) a^\ast a$ where $h=\deg(a^\ast)$, we conclude that $a^\ast a\notin\Fm$.
\end{proof}

We end by clarifying the classification of simple weight modules without inner breaks from \cite{Har2006}.

\begin{Lemma} \label{lem:partition}
Let $\CA=\CA(R,\Bsi,\Bt)$ be a TGWA. Let
\[\mathscr{S}=\big\{\Supp(V)\mid \text{$V$ is a simple weight $\CA$-module without inner breaks}\big\}.\]
Then the following three statements hold.
\begin{enumerate}[{\rm (a)}]
\item If $S\in\mathscr{S}$ and $\Fm,\Fn\in S$, then $\CC_\Fm\simeq\CC_\Fn$ as graded $\K$-algebras.
\item $\mathscr{S}$ is is a partition of $\Specm(R)$.
\item Put $S_\Fm=\{\si_g(\Fm)\mid \text{$g\in \Z^n$ such that $a^\ast \cdot a\notin \Fm$ for some $a\in \CA_g$}\big\}$. Then
\begin{equation}
\mathscr{S}=\{S_\Fm\mid \Fm\in\Specm(R)\}.
\end{equation}
\end{enumerate}
\end{Lemma}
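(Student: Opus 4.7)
The plan is to combine Theorem \ref{thm:no-inner-breaks} with a direct analysis of how homogeneous elements of $\CA$ move between weight spaces of a simple module without inner breaks. I will first establish that $\Supp(V)=S_\Fm$ for every $\Fm\in\Supp(V)$ (which is the content of (c) modulo existence), then use it to derive (a), and finally handle the two halves of (b).

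The first step is as follows. Fix a simple weight module $V$ without inner breaks and $\Fm\in\Supp(V)$. For $\Fn\in\Supp(V)$, simplicity of $V$ produces a homogeneous $a\in\CA_g$ with $0\neq aV_\Fm\subseteq V_\Fn$; by \eqref{eq:AgMm} this forces $\si_g(\Fm)=\Fn$, while Lemma \ref{lem:NIBequiv} yields $a^\ast a\notin\Fm$, placing $\Fn$ in $S_\Fm$. Conversely, if $a\in\CA_g$ satisfies $a^\ast a\notin\Fm$, then $a^\ast a$ acts invertibly on $V_\Fm$, whence $aV_\Fm\neq 0$ and $\si_g(\Fm)\in\Supp(V)$. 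Hence $\Supp(V)=S_\Fm$.

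The second step proves (a). Given $\Fm,\Fn\in\Supp(V)$, pick a homogeneous $a\in\CA_g$ with $\si_g(\Fm)=\Fn$ and $a^\ast a\notin\Fm$ as above; since $\Z^n$ is abelian, $G_\Fm=G_\Fn$. I expect the graded isomorphism $\CC_\Fm\to\CC_\Fn$ to be induced, on each graded component $\CA_h$ ($h\in G_\Fm$), by the rule $x\mapsto (aa^\ast)^{-1}\cdot axa^\ast$, where the inverse is taken modulo $\Fn$ (possible since $aa^\ast=\si_g(a^\ast a)\notin\si_g(\Fm)=\Fn$). The main technical obstacle is verifying well-definedness on the quotients $\CA_h/\CA_h\Fm\to\CA_h/\CA_h\Fn$, multiplicativity, and independence of the choice of $a$; once this is done, the map sends $\CJ_\Fm$ onto $\CJ_\Fn$ automatically, since each equals the gradation radical of its ambient algebra by Theorem \ref{thm:TmJm}(b) and any graded isomorphism preserves gradation radicals.

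The third step proves (b). Disjointness of $\mathscr{S}$ is immediate from the first step: if $\Fl\in\Supp(V)\cap\Supp(W)$ with $V,W$ both simple without inner breaks, then $\Supp(V)=S_\Fl=\Supp(W)$. For coverage, given $\Fm\in\Specm(R)$, I would take a simple module $N$ over the crossed-product algebra $\CT_\Fm$ (nonzero by Theorem \ref{thm:TmJm}(a)), inflate along the projection $\CC(\Fm)\twoheadrightarrow\CC_\Fm\twoheadrightarrow\CC_\Fm/\CJ_\Fm\simeq\CT_\Fm$ to obtain a $\CC(\Fm)$-module, induce to $\CA$ via $M=\CA\otimes_{\CC(\Fm)}N$, and extract a simple quotient on which the image of $1\otimes N$ is nonzero. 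The first step then forces the support of this quotient to equal $S_\Fm$, and Theorem \ref{thm:no-inner-breaks} combined with the fact that $\CJ_\Fm$ annihilates $N$ ensures the quotient has no inner breaks. This completes (b), and combined with the first step, also (c).
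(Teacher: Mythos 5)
Your step 1 (which is (c)), step 2 (which is (a)), and the disjointness half of step 3 all match the paper's strategy; for (c) the paper simply cites \cite[Cor.~5.2]{Har2006}, so your direct argument is a welcome supplement, and for (a) the paper takes the slightly cleaner conjugation $x\mapsto a x a'$ where $a'\in\CA_{-g}$ satisfies $a'a\in 1+\Fm$ (so $aa'\in 1+\Fn$ by \eqref{eq:TGWAtrace}), which sidesteps the well-definedness and multiplicativity checks that your formula $x\mapsto(aa^\ast)^{-1}axa^\ast$ requires and that you yourself flag as unresolved.

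The coverage half of (b) has a genuine gap. After constructing a simple quotient $M$ of $\CA\otimes_{\CC(\Fm)}N$ with $M_\Fm\simeq N$ and $\CJ_\Fm M_\Fm=0$, you write that ``the first step then forces the support of this quotient to equal $S_\Fm$'' and then that Theorem~\ref{thm:no-inner-breaks} gives no inner breaks. This order is circular: the first step applies only to modules already known to have no inner breaks, which is exactly what you are trying to establish. Moreover, Theorem~\ref{thm:no-inner-breaks}(ii)$\Rightarrow$(i) requires $\CJ_\Fn M_\Fn=0$ for \emph{every} $\Fn\in\Supp(M)$, whereas your construction only yields the single case $\Fn=\Fm$. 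The missing step is to transport the annihilation property to each $\Fn\in\Supp(M)$ using the graded isomorphism $\varphi_a\colon\CC_\Fm\to\CC_\Fn$, $x\mapsto axa'$, from part (a): since both $\CJ_\Fm$ and $\CJ_\Fn$ are the gradation radicals of their respective algebras (Theorem~\ref{thm:TmJm}(b)) and graded isomorphisms preserve gradation radicals, $\varphi_a(\CJ_\Fm)=\CJ_\Fn$, whence $\CJ_\Fn M_\Fn=a\CJ_\Fm a'M_\Fn\subseteq a\CJ_\Fm M_\Fm=0$. Only after this can Theorem~\ref{thm:no-inner-breaks} be invoked, and only then does your first step identify $\Supp(M)=S_\Fm$. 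You already have (a) at your disposal, so the fix is local, but as written the coverage argument does not go through.
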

\begin{proof}
(a) By Lemma \ref{lem:NIBequiv} there exists $g\in\Z^n$ and $a\in \CA_g$ with $\si_g(\Fm)=\Fn$ and $a^\ast\cdot a\notin\Fm$. Since $\Fm$ is maximal there is $a'\in \CA_{-g}$ with $a'\cdot a\in 1+\Fm$ and by \eqref{eq:TGWAtrace} $a\cdot a'=\si_g(a'\cdot a)\in 1+\Fn$. Define $\varphi_a: \CC\to\CC$ by $\varphi_a(b)=aba'$. Clearly $\varphi_a(\Fm)\subseteq \Fn$, hence we get a map $\tilde\varphi_a:\CC_\Fm\to\CC_\Fn$. It is straightforward to check that the latter is a graded isomorphism.

(b) First we show $\cup\mathscr{S}=\Specm(R)$. Let $\Fm\in \Specm(R)$ and $N$ be a simple $\CT_\Fm$-module and extend it to a (simple) $\CC_\Fm$-module by requiring $\CJ_\Fm N=0$. By \cite[Prop.~7.2]{MazPonTur2003}, the induced module $\CA\otimes_{\CC} N$ has a unique simple quotient $M$ such that $M_\Fm\simeq N$ as $\CC_\Fm$-modules.
For any other $\Fn\in\Supp(M)$, let $\varphi:\CC_\Fm\to\CC_\Fn$ be a graded isomorphism as constructed in part (a). In particular $\varphi(\CJ_\Fm)=\CJ_\Fn$. Then $\CJ_\Fn M_\Fn=\varphi(\CJ_\Fm)M_\Fn=a\CJ_{\Fm}a'M_\Fn\subseteq a \CJ_\Fm M_\Fm = 0$. By Theorem \ref{thm:no-inner-breaks}, $M$ has no inner breaks. This proves $\Fm\in\cup\mathscr{S}$ as desired.

Next, suppose $\Supp(V)\cap\Supp(W)\neq\emptyset$ for some simple weight $\CA$-modules $V$ and $W$ without inner breaks. We claim that $\Supp(V)=\Supp(W)$. By symmetry it suffices to show that $\Supp(V)\subseteq\Supp(W)$. Let $\Fm\in\Supp(V)\cap\Supp(W)$ and $\Fn\in\Supp(V)$. Since $V$ is simple there exists $a\in\CA$ such that $aV_\Fm\neq 0$ and $aV_\Fm\subseteq V_\Fn$. Writing $a$ as a sum of homogeneous elements, $a=a_1+a_2+\cdots+a_k$ where $g_i=\deg(a_i)\in\Z^n$, there exists at least one term $a_i$ such that $a_iV_\Fm\neq 0$ and $a_iV_\Fm\subseteq V_\Fn$. Therefore we may assume that $a$ itself is homogeneous to begin with. Let $g=\deg(a)$. Since $V$ has no inner breaks, Lemma \ref{lem:NIBequiv} implies that $a^\ast\cdot a\notin\Fm$. Therefore $a^\ast \cdot a$ acts invertibly on the weight space $W_\Fm$. In particular $aW_\Fm\neq 0$. Since $\deg(a)=g$ and $\si_g(\Fm)=\Fn$, we have $aW_\Fm\subseteq W_\Fn$. Thus $\Fn\in\Supp(W)$.

(c) If $M$ is any simple weight $\CA$-module without inner breaks, then \cite[Cor.~5.2]{Har2006} implies that
$\Supp(M)=S_\Fm$ for any $\Fm\in\Supp(M)$.
\end{proof}

For each $S\in \mathscr{S}$, pick a maximal ideal $\Fm(S)\in S$ at random.

\begin{Theorem}[{\cite{Har2006}}] \label{thm:TGWA-modules-classification}
There is a bijective correspondence between the set of isoclasses of simple weight $\CA$-modules without inner breaks, and the set of pairs $(S,N)$ where $S\in\mathscr{S}$ and $N$ is an isoclass of simple $\CC_{\Fm(S)}$-modules.
\end{Theorem}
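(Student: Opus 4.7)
The plan is to realize the bijection through the two natural constructions used implicitly in the proof of Lemma \ref{lem:partition}(b): the forward map extracts the distinguished weight space $N=M_{\Fm(S)}$, and the inverse map inflates, induces, and passes to the unique simple quotient.

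For the forward map, I would send a simple weight $\CA$-module $M$ without inner breaks to the pair $(S,N)$ where $S=\Supp(M)$ (which lies in $\mathscr{S}$ by Lemma \ref{lem:partition}(c)) and $N=M_{\Fm(S)}$. The nontrivial step is the simplicity of $N$ as a $\CC_{\Fm(S)}$-module: given any nonzero $\CC_{\Fm(S)}$-submodule $N'\subseteq N$, the $\CA$-submodule $\CA\cdot N'$ equals $M$ by simplicity of $M$, and its $\Fm(S)$-weight component, computed via the grading relation $\CA_g M_\Fn\subseteq M_{\si_g(\Fn)}$, collapses to $\CC(\Fm(S))\cdot N'=N'$, forcing $N'=N$. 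Theorem \ref{thm:no-inner-breaks} furthermore guarantees $\CJ_{\Fm(S)}N=0$, so $N$ factors through $\CT_{\Fm(S)}=\CC_{\Fm(S)}/\CJ_{\Fm(S)}$ (which is the intended reading of ``simple $\CC_{\Fm(S)}$-module'' in the statement).

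For the inverse map, given $(S,N)$ with $\Fm=\Fm(S)$, I would inflate $N$ to a $\CC(\Fm)$-module through the projection $\CC(\Fm)\twoheadrightarrow\CC_\Fm$ and form the induced module $\hat{M}=\CA\otimes_{\CC(\Fm)}N$. The natural $\Z^n$-grading on $\hat{M}$ gives $\hat{M}_\Fm\simeq N$. I would then take the sum $K$ of all graded $\CA$-submodules of $\hat{M}$ whose $\Fm$-weight component vanishes, argue that $K$ itself has trivial $\Fm$-weight component, and define $L(S,N):=\hat{M}/K$. By construction $L(S,N)_\Fm\simeq N$, and since $\CJ_\Fm N=0$, Theorem \ref{thm:no-inner-breaks} shows $L(S,N)$ has no inner breaks; Lemma \ref{lem:partition}(c) then identifies $\Supp L(S,N)$ with $S$. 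Mutual inversion is immediate in one direction, and in the other follows by observing that the canonical map $\CA\otimes_{\CC(\Fm)}M_{\Fm}\to M$, $a\otimes v\mapsto av$, is surjective (as $N$ generates $M$ by simplicity) with kernel missing the $\Fm$-weight component, hence factors through an isomorphism $L(S,N)\xrightarrow{\sim}M$.

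The main obstacle will be the closure step in the definition of $K$: showing that the sum of two graded submodules of $\hat{M}$ with vanishing $\Fm$-weight component still has vanishing $\Fm$-weight component. This reduces to the claim that for every nonzero homogeneous $v\in\hat{M}$ of weight $\Fn\in S$, the submodule $\CA\cdot v$ meets $\hat{M}_\Fm$ nontrivially. Here the description $S=S_{\Fm}$ from Lemma \ref{lem:partition}(c) is exactly what is needed: it produces $g\in\Z^n$ and $a\in\CA_g$ with $\si_g(\Fn)=\Fm$ and $a^\ast a\notin\Fn$, so that $a^\ast a$ acts bijectively on $\hat{M}_\Fn$, giving $av\in\hat{M}_\Fm\setminus\{0\}$. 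The same estimate, repeated across the orbit $S$, both establishes the uniqueness of the simple quotient and secures the no-inner-breaks property through Lemma \ref{lem:NIBequiv}.
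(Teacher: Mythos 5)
The paper does not reprove this theorem; it cites it from \cite{Har2006}, with the supporting machinery laid out in Lemma \ref{lem:partition} (and \cite[Prop.~7.2]{MazPonTur2003} for the induced-module step). Your construction is essentially the standard one that underlies that citation: extract $M_{\Fm(S)}$ in one direction, and induce from $\CC(\Fm(S))$ and take the unique simple quotient in the other. Your reading of ``simple $\CC_{\Fm(S)}$-module'' as ``simple $\CT_{\Fm(S)}$-module'' (equivalently, a simple $\CC_{\Fm(S)}$-module killed by $\CJ_{\Fm(S)}$) is also the intended one, consistent with the paper's own use in the proof of Lemma \ref{lem:partition}(b), where the starting datum is a simple $\CT_\Fm$-module inflated along $\CJ_\Fm N=0$.

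Two remarks on the details. First, the ``main obstacle'' you identify is not actually an obstacle: any submodule of a weight module is automatically a weight (i.e.\ graded) submodule, so if $K_1,K_2\subseteq\hat{M}$ are submodules with $(K_1)_\Fm=(K_2)_\Fm=0$ then $(K_1+K_2)_\Fm=(K_1)_\Fm+(K_2)_\Fm=0$ on the nose; the ``$\CA\cdot v$ meets $\hat{M}_\Fm$'' estimate is not needed for that closure step (though it is relevant elsewhere, e.g.\ to showing that every proper submodule of $\hat{M}$ lies in $K$, which you do use implicitly). Second, to establish that $L(S,N)$ has no inner breaks you need $\CJ_\Fn L(S,N)_\Fn=0$ for \emph{every} $\Fn$ in the support, not only for $\Fn=\Fm(S)$; this is exactly where the paper invokes the graded isomorphism $\CC_\Fm\simeq\CC_\Fn$ (which carries $\CJ_\Fm$ to $\CJ_\Fn$) from Lemma \ref{lem:partition}(a). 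You gesture at this with ``the same estimate, repeated across the orbit,'' but the clean argument is to transport $\CJ_\Fm N=0$ along that isomorphism via $\CJ_\Fn M_\Fn = a\,\CJ_\Fm\, a' M_\Fn\subseteq a\,\CJ_\Fm M_\Fm=0$, as in the paper's proof of Lemma \ref{lem:partition}(b); once that is made explicit, the rest of your argument is correct.
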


\section{Proof of Theorem \ref{thm:A}}\label{sec:ProofA}
Recall from Definition \ref{def:NCKFP} the notion of a noncommutative Kleinian fiber product $\CA_{\al_1,\al_2}(p_1,p_2)$. By Corollary \ref{cor:fiber-TGWA}, $\CA_{\al_1,\al_2}(p_1,p_2)$ is isomorphic to the TGWA $\CA(R,\Bsi,\Bt)$ where $R=\C[u]$, $\Bsi=(\si_1,\si_2)$, $\si_i(u)=u-\al_i$ and $\Bt=(p_1(u),p_2(u))$.

\begin{Lemma}\label{lem:fiber-isomorphisms}
We have the following isomorphisms.
\begin{enumerate}[{\rm (i)}]
\item \label{it:fiber-transposition}
(Transposition)
$\CA_{\al_1,\al_2}(p_1,p_2)\simeq \CA_{\al_2,\al_1}(p_2,p_1)$.
\item (Affine transformations)
If $\psi(u)=\ga u+\ga_0$ for some $\ga\in\C\setminus\{0\}, \ga_0\in\C$ then
\begin{equation}\label{eq:affine-isomorphism}
\CA_{\al_1,\al_2}(p_1\circ\psi,p_2\circ\psi)\simeq\CA_{\ga\al_1,\ga\al_2}(p_1,p_2).
\end{equation}
\end{enumerate}
\end{Lemma}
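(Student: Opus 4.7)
The plan is to construct explicit algebra homomorphisms between the pre-quotient algebras $\tilde{\CA}$ and then descend to the quotients using Theorem \ref{thm:TGWA-gradation-radical}, which characterizes $\CI$ as the $R$-torsion subspace with $R=\C[H]$.

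For (i), I would define $\tilde\Phi:\tilde{\CA}_{\al_1,\al_2}(p_1,p_2)\to\tilde{\CA}_{\al_2,\al_1}(p_2,p_1)$ on generators by $H\mapsto H$ and $X_i^\pm\mapsto X_{3-i}^\pm$. The defining relations \eqref{eq:Aalbepq-rels} are manifestly symmetric under interchanging the subscripts $1\leftrightarrow 2$, so $\tilde\Phi$ is a well-defined algebra homomorphism, and the same formula provides its inverse.

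For (ii), write $\psi(u)=\ga u+\ga_0$ with $\ga\in\C^\times$ and define $\tilde\Phi:\tilde{\CA}_{\ga\al_1,\ga\al_2}(p_1,p_2)\to\tilde{\CA}_{\al_1,\al_2}(p_1\circ\psi,p_2\circ\psi)$ by $H'\mapsto\psi(H)=\ga H+\ga_0$ and $(X_i')^\pm\mapsto X_i^\pm$. The verification of the defining relations is a direct computation: for the commutation relations, $[\psi(H),X_i^+]=\ga[H,X_i^+]=\ga\al_i X_i^+$; for the polynomial relations,
\begin{equation*}
X_i^+X_i^-=(p_i\circ\psi)(H-\tfrac{\al_i}{2})=p_i\bigl(\psi(H-\tfrac{\al_i}{2})\bigr)=p_i\bigl(\psi(H)-\tfrac{\ga\al_i}{2}\bigr),
\end{equation*}
which matches $(X_i')^+(X_i')^-=p_i(H'-\ga\al_i/2)$ after applying $\tilde\Phi$; and the mixed commuting relations $X_1^\pm X_2^\mp=X_2^\mp X_1^\pm$ are preserved trivially. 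The inverse is given by $H\mapsto\psi^{-1}(H')$ and $X_i^\pm\mapsto(X_i')^\pm$, so $\tilde\Phi$ is an isomorphism.

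To descend to the quotients, apply Theorem \ref{thm:TGWA-gradation-radical}: in each algebra, $\CI$ coincides with the set of $R$-torsion elements, where $R=\C[H]$. In (i), $\tilde\Phi$ restricts to the identity on $R$; in (ii), it restricts to the automorphism $\psi$ of $R$. In both cases, regular elements of $R$ are sent to regular elements under $\tilde\Phi$, so if $f(H)\cdot a=0$ with $f\in\C[u]\setminus\{0\}$, then $\tilde\Phi(f(H))\cdot\tilde\Phi(a)=0$ with $\tilde\Phi(f(H))$ a nonzero polynomial in the target copy of $H$. Hence $\tilde\Phi(\CI)\subseteq\CI$, and symmetrically for $\tilde\Phi^{-1}$, giving $\tilde\Phi(\CI)=\CI$. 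This induces the claimed isomorphism on the Kleinian fiber products. The only mildly conceptual point is this passage to the quotient; the rest of the verifications are routine, and no obstacle is anticipated.
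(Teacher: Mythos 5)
Your proposal is correct and takes essentially the same approach as the paper: define the map on generators at the level of $\tilde{\CA}$, verify the defining relations, and pass to the quotient by observing that the map sends $\CI$ into $\CI$. The only cosmetic difference is that you invoke Theorem~\ref{thm:TGWA-gradation-radical} to identify $\CI$ with the $R$-torsion, whereas in Definition~\ref{def:NCKFP} the ideal $\CI$ of a noncommutative Kleinian fiber product is already defined as the $\C[H]$-torsion, so the paper descends directly from that definition.
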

\begin{proof}
(i) Obvious.

(ii) Let $q_i=p_i\circ \psi$.
We construct a homomorphism $\Psi:\CA_{\ga\al_1,\ga\al_2}(p_1,p_2)\to \CA_{\al_1,\al_2}(q_1,q_2)$ as follows.
Let $\Psi_F$ be the unique homomorphism from the free algebra on the set 
\[\{X_1^+,X_1^-,X_2^+,X_2^-,H\}\]
to $\CA_{\al_1,\al_2}(q_1,q_2)$ determined by
 $\Psi_F(X_i^\pm)=X_i^\pm$, $\Psi_F(H)=\ga H+ \ga_0$.
It is straightforward to verify that the defining relations \eqref{eq:Aalbepq-rels} for $\tilde{\CA}_{\ga\al_1,\ga\al_2}(p_1,p_2)$ are preserved. For example,
\[\Psi_F\big(X_i^\pm X_i^\mp-p_i(H\mp \ga\al_i/2)\big)=X_i^\pm X_i^\mp-p_i(\ga H+\ga_0\mp\ga\al_i/2)=X_i^\pm X_i^\mp-q_i(H\mp\al_i/2)=0.\]
Hence $\Psi_F$ induces a homomorphism $\tilde{\Psi}:\tilde{\CA}_{\ga\al_1,\ga\al_2}(p_1,p_2)\to\tilde{\CA}_{\al_1,\al_2}(q_1,q_2)$. If $a\in\tilde{\CA}_{\ga\al_1,\ga\al_2}(p_1,p_2)$ is such that $f(H)\cdot a=0$ for some polynomial $f$, then applying $\tilde{\Psi}$ we get $f(\ga H+\ga_0)\cdot \tilde\Psi(a)=0$, which shows that $\tilde{\Psi}(\CI)\subseteq \CI$. Hence $\tilde{\Psi}$ induces an homomorphism $\Psi$ between the corresponding quotients as required. Since $\psi$ is invertible, so is $\Psi$.
\end{proof}

We now prove our first main theorem, Theorem \ref{thm:A}, stated in Section \ref{sec:thmA}.

\begin{proof}[Proof of Theorem \ref{thm:A}]
Since $\CA$ is non-trivial, by Proposition \ref{prp:NKFP-Consistency} and Theorem \ref{thm:HarRos-classification}, 
$\Z\al_1+\Z\al_2$ has rank one, hence $\Z\al_1+\Z\al_2=\Z\ga$ for some $\ga\in \C^\times$.
After applying the affine transformation \eqref{eq:affine-isomorphism} corresponding to $\psi(u)=\ga^{-1}u$, we may assume that $\Z\al_1+\Z\al_2=\Z$.

By the defining relations \eqref{eq:NCKS-rels} of $\CA_{\al_1,\al_2}(p_1,p_2)$, if $Hv=\la v$ then $HX_i^\pm v=(\la\pm\al_i)X_i^\pm v$ and hence the category of weight modules is a direct product of subcategories consisting of all modules with support contained in a single coset $\om\in\C/(\Z\al_1+\Z\al_2)=\C/\Z$:
\[\mathscr{W}_{\al_1,\al_2}(p_1,p_2)\simeq \prod_{\om\in\C/\Z} \mathscr{W}_{\al_1,\al_2}(p_1,p_2)_\om.\]
Each such subcategory $\mathscr{W}_{\al_1,\al_2}(p_1,p_2)_\om$
is equivalent to the category of all weight modules
over a corresponding localization of the algebra $\CA_{\al_1,\al_2}(p_1,p_2)$ at the multiplicative set generated by  $H-\mu$, $\mu\in\C\setminus\om$ (where $\setminus$ is set difference). By localization results in \cite{FutHar2012a}, the resulting algebra is isomorphic to the TGWA $\CA\big(R_\om,(\si_1,\si_2),(p_1,p_2)\big)$ where $R_\om=\{f/g\in\C(u)\mid \forall n\in\om:\, g(n)\neq 0\}$ and $\si_i(H)=H-\al_i$.

By Theorem \ref{thm:HarRos-classification}, we can write $p_i(u)=\tilde{p}_i(u) P_i^{\mathscr{L}^{(\om)}}(u-\la)$ for some $(m,n)$-periodic lattice configuration $\mathscr{L}^{(\om)}$ and $\la\in\om$. Here $\tilde{p}_i\in R_\om$ is the product of the remaining factors in \eqref{eq:HR-factorization}, all having zeros in cosets other than $\om$, hence $\tilde{p}_i$ is invertible in $R_\om$ for $i=1,2$. In addition,
$(\tilde{p}_1,\tilde{p}_2)$ also solves the Mazorchuk-Turowska equation \eqref{eq:MTeq} by Theorem \ref{thm:HarRos-classification}. Therefore, Proposition \ref{prp:rank-2-isomorphisms} implies that 
\[
\CA\big(R_\om,(\si_1,\si_2),(p_1,p_2)\big)\simeq \CA\big(R_\om,(\si_1,\si_2),(P_1^{\mathscr{L}^{(\om)}}(u-\la),P_2^{\mathscr{L}^{(\om)}}(u-\la))\big)
\]
as $R_\om$-rings (that is, $H\mapsto H$ under this isomorphism).
Using a translation isomorphism as in \eqref{eq:affine-isomorphism} associated to $\psi(u)=u-\la$,
\[\CA\big(R_\om,(\si_1,\si_2),(P_1^{\mathscr{L}^{(\om)}}(u-\la),P_2^{\mathscr{L}^{(\om)}}(u-\la))\big)
\simeq 
 \CA\big(R_\Z,(\si_1,\si_2),(P_1^{\mathscr{L}^{(\om)}},P_2^{\mathscr{L}^{(\om)}})\big).
\]  
By localization arguments as above, the category of weight modules over the right hand side is equivalent to $\mathscr{W}(\mathscr{L}^{(\om)})_\Z$.
\end{proof}

\section{Simple weight modules over $\CA(\mathscr{L})$}

Having reduced the problem of classifying simple weight modules over $\CA_{\al_1,\al_2}(p_1,p_2)$ to the 
problem of classifying simple integral weight modules over $\CA(\mathscr{L})=\CA_{-n,m}\big(P_1^\mathscr{L}(u;-n,m),P_2^{\mathscr{L}}(u;-n,m)\big)$, we focus our attention on the latter in this section,
with the goal to prove the second main theorem, Theorem \ref{thm:B}, from Section \ref{sec:thmB}.

\subsection{Notation}
We fix notation that will be used for the rest of the paper.
Let $(m,n)$ be a pair of relatively prime non-negative integers and
$(\al_1,\al_2)\in\C^2$ be a nonzero vector such that $m\al_1+n\al_2=0$. In examples or when otherwise compelled to make a choice we always choose $(\al_1,\al_2)=(-n,m)$. To simplify notation we put:
\begin{align*}
F &= \Z\al_1+\Z\al_2=\Z && \text{(face lattice)}\\ 
V &= F + (\al_1+\al_2)/2 && \text{(vertex lattice)}\\
E_i &= F+\al_i/2,\quad i=1,2 &&\text{(midpoints of vertical and horizontal edges respectively)}
\end{align*}
For $\la=x_1\al_1+x_2\al_2\in F$ we put $\bar\la=(x_1,x_2)+\Ga_{m,n}\in \T_{m,n}$. The map $\la\mapsto\bar\la$ is well-defined and injective. Put $\bar F=\{\bar \la\mid \la\in F\}$.

Let $\mathscr{L}=(\mathscr{L}_1,\mathscr{L}_2)$ be a pair of functions $\mathscr{L}_i:E_i\to\Z_{\ge 0}$ such that $\mathscr{L}_i(e)=0$ for all but finitely many $e\in E_i$,
and the ice rule holds (cf. Figure \ref{fig:vertex}) for all $v\in V$:
\begin{equation}\label{eq:ice-rule-alpha}
\mathscr{L}_1(v-\al_1/2)+\mathscr{L}_2(v-\al_2/2)=\mathscr{L}_1(v+\al_1/2)+\mathscr{L}_2(v+\al_2/2)
\end{equation}
The polynomials $P_i^\mathscr{L}(u)\in\C[u]$ from \eqref{eq:fundamental-solution} can then be written
\begin{equation}
P^\mathscr{L}_i(u)=\prod_{e\in E_i}(u-e)^{\mathscr{L}_i(e)}
\end{equation}
Let $\CA=\CA(\mathscr{L})=\CA_{\al_1,\al_2}(P_1^\mathscr{L},P_2^\mathscr{L})$ be the corresponding noncommutative Kleinian fiber product.

\subsection{Simple integral weight modules without inner breaks}
Since $\CA$ is a TGWA of rank two, it is a $\Z^2$-graded algebra, with $\deg(X_1^\pm)=(\pm 1,0)$, $\deg(X_2^\pm)=(0,\pm 1)$, $\deg(H)=(0,0)$.

\begin{Definition}[Cyclic subalgebra]
The \emph{cyclic subalgebra} of $\CA$ is the centralizer of $H$ in $\CA$:
\begin{equation}
\CC=C_\CA(H)=\{a\in \CA\mid [H,a]=0\}.
\end{equation}
\end{Definition}

We define the following cyclic subgroup of the gradation group $\Z^2$:
\begin{equation}
G_0=\langle(m,n)\rangle
\end{equation}

\begin{Lemma}
$\CC=\bigoplus_{g\in G_0} \CA_g$ and $\CC$ is a maximal commutative subalgebra of $\CA$.
\end{Lemma}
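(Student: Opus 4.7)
The plan is to establish the two assertions of the lemma---the graded decomposition of $\CC$ and its maximal commutativity---in sequence, with commutativity being the step requiring the most input.

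For the decomposition $\CC = \bigoplus_{g \in G_0} \CA_g$, I would begin by deriving from the defining relations \eqref{eq:Aalbepq-rels} that $[H, X_i^\pm] = \pm \al_i X_i^\pm$, and then, by iterating via the Leibniz rule together with $R$-linearity in degree zero, that $[H, a] = (g_1 \al_1 + g_2 \al_2)\, a$ for every homogeneous $a \in \CA_g$ with $g = (g_1, g_2)$. An arbitrary element decomposed according to the $\Z^2$-grading then satisfies $[H, a] = 0$ if and only if every component $a_g$ with $g_1 \al_1 + g_2 \al_2 \neq 0$ vanishes. The kernel of the $\Z$-linear map $(g_1, g_2) \mapsto g_1 \al_1 + g_2 \al_2$ is exactly $G_0 = \langle (m, n) \rangle$, using $m \al_1 + n \al_2 = 0$, $\gcd(m, n) = 1$, and the assumption that $(\al_1, \al_2) \neq (0, 0)$.

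For commutativity of $\CC$, I would pass to the localization $\tilde{\CA}$ of $\CA$ at the multiplicative set $R \setminus \{0\}$; this embedding is injective because $\CA$ is a domain (as noted after Definition \ref{def:NCKFP}) and the elements of $R \setminus \{0\}$ are regular in $\CA$. By Theorem \ref{thm:PIDCGR}, $\CA$ is a CGR, so each $\CA_g$ is free of rank one over $R$, whence each $\tilde{\CA}_g$ is one-dimensional over $K := \Frac R = \C(H)$. In $\tilde{\CA}$ the polynomials $P_i^\mathscr{L}$ become units, so the $X_i^\pm$ are invertible and $\tilde{\CA}$ is a $\Z^2$-crossed product over $K$ with one-dimensional graded pieces. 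The centralizer $\tilde{\CC}$ of $H$ in $\tilde{\CA}$ is $\bigoplus_{g \in G_0} \tilde{\CA}_g$, a twisted group algebra of $G_0 \cong \Z$ over $K$ with trivial action, since $\si_g$ fixes $H$ for $g \in G_0$. Because $\Z$ is a free abelian group, any 2-cocycle with values in $K^\times$ is a coboundary; concretely, picking any unit $b \in \tilde{\CA}_{(m,n)}$, its integer powers $\{b^k\}_{k \in \Z}$ form a $K$-basis of $\tilde{\CC}$, presenting it as the commutative Laurent polynomial ring $K[b, b^{-1}]$. The subring $\CC \subseteq \tilde{\CC}$ is therefore commutative.

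Maximality then follows at once: any $a \in \CA$ centralizing $\CC$ in particular commutes with $H \in \CC$, so $a \in C_\CA(H) = \CC$, giving $C_\CA(\CC) = \CC$. The main obstacle is the commutativity step, since the graded decomposition alone does not force $\CC$ to be commutative; the cleanest route passes through localization and requires both the domain property of $\CA$ and the rank-one freeness of the graded components $\CA_g$ over $R$, inputs that are nontrivial but already available in the paper.
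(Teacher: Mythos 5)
Your proof is correct, and the commutativity step takes a genuinely different route from the paper. The paper simply cites Corollary~5.4 of Hartwig--\"Oinert \cite{HarOin2013} (a general maximal-commutativity result for centralizers of $R$ in TGWAs) after observing, just as you do, that $[H,a]=(d_1\al_1+d_2\al_2)a$ on the homogeneous piece $\CA_{(d_1,d_2)}$, which pins down $\CC=\bigoplus_{g\in G_0}\CA_g$. You instead give a self-contained argument: localize at $R\setminus\{0\}$ (injective since $\CA$ is a domain, or, more directly, since the CGR structure $\CA_g\simeq R$ makes every $s\in R\setminus\{0\}$ regular in $\CA$), observe that the localized graded pieces are one-dimensional over $K=\C(H)$ by Theorem~\ref{thm:PIDCGR}, that $\si_g$ acts trivially on $K$ for $g\in G_0$, and that powers of a single unit $b\in\tilde{\CA}_{(m,n)}$ then realize the localized centralizer as the commutative ring $K[b,b^{-1}]$. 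This buys you an explicit description of $\CC$ after localization at no extra cost (it is in fact the same object that reappears as $\CC_\la\simeq\C[L,L^{-1}]$ in Proposition~\ref{prp:B_la-J_la-Description}(b) and as $C=C_{m,n}$ in Section~\ref{sec:center}), at the price of invoking the CGR theorem and the domain property, both of which are however established or asserted before this point in the paper. Two minor notes: the paper already uses $\tilde{\CA}$ for the TGWC, so another symbol (e.g.~$\CA_{\mathrm{loc}}$, the one the paper itself later adopts) would avoid a collision; and the appeal to $H^2(\Z,K^\times)=0$ is redundant given your explicit choice of the basis $\{b^k\}_{k\in\Z}$, which already exhibits the Laurent structure directly.
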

\begin{proof}
Since $H$ is homogeneous of degree zero, $\CC$ is a graded subalgebra of $\CA$. If $a\in \CA$ has degree $(d_1,d_2)\in\Z^2$ then defining relations of $\CA$ imply $[H,a]=(d_1\al_1+d_2\al_2)a$. This is zero if and only if $(d_1,d_2)$ is a multiple of $(m,n)$. By \cite[Cor.~5.4]{HarOin2013} the algebra $\CC$ is commutative, hence maximal commutative.
\end{proof}

In the language of TGWAs, this means that $\CC(\Fm)$ is independent of $\Fm\in\Specm(R)$, see Definition \ref{def:cyclic-subalgebra}.

\begin{Definition}[Cyclic subquotients]
Let $\la\in F$. The \emph{cyclic subquotient of $\CA$ at $\la$}, denoted $\CC_\la$, is defined as the quotient of $\CC$ by the principal ideal generated by $H-\la$:
\begin{equation}
\CC_\la=\CC/(H-\la)
\end{equation}
\end{Definition}
Since $\CC_\la$ is a quotient of $\CC$, it follows that $\CC_\la$ is also commutative, and since $H-\la$ is homogeneous with respect to the $G_0$-gradation, $\CC_\la$ inherits a gradation by the same group $G_0$.
Let $\CJ_\la$ denote the gradation radical of the $G_0$-graded algebra $\CC_\la$ (recall Definition \ref{def:gradation-radical}).

Next we need some notation involving binary sequences.
Let $\mathsf{Seq}_2$ denote the set of all finite sequences $\un{i}=i_1i_2\cdots i_\ell$ where $i_j\in\{1,2\}$ for each $j$. For $k\in\{1,2\}$, let $\ell_k(\un{i})=|\{j\in\{1,2,\ldots,\ell\}\mid i_j=k\}|$ be the number of times $k$ appear in the sequence $\un{i}$, and let $\ell(\un{i})=\ell_1(\un{i})+\ell_2(\un{i})=\ell$ denote the length of $\un{i}$. The empty sequence has length zero. Let $\mathsf{Seq}_2(m,n)$ be the subset of $\mathsf{Seq}_2$ consisting of all sequences $\un{i}$ (necessarily of length $m+n$) with $\ell_1(\un{i})=m$ and $\ell_2(\un{i})=n$.

\begin{Definition}[Based face and vertex paths]
Let $\un{i}\in\mathsf{Seq}_2$ and $\la\in F$. The \emph{based face path} $\bar\pi(\un{i},\la)\subset \T_{m,n}$ is defined to be the union of line segments
\[ \bar\pi(\un{i},\la)=[\bar \la, \bar \la+\mathbf{e}_{i_1}]\cup [\bar \la + \mathbf{e}_{i_1}, \bar \la + \mathbf{e}_{i_1}+\mathbf{e}_{i_2}] \cup \cdots \cup [\bar \la + \mathbf{e}_1+\mathbf{e}_2+\cdots+\mathbf{e}_{\ell-1}, \bar \la+\mathbf{e}_1+\mathbf{e}_2+\cdots\mathbf{e}_\ell],\]
where we used the natural translation action of $\Z^2$ on $\T_{m,n}=\R^2/\langle(m,n)\rangle$.
The definition of a \emph{based vertex path} $\bar\pi(\un{i},v)$ is obtained by replacing $\la\in F$ by $v\in V$. Similarly we define based face paths $\pi(\un{i},\la)$ in $\R^2$.
\end{Definition}

For $\un{i}=i_1i_2\cdots i_\ell\in\mathsf{Seq}_2$ we define
\begin{equation}
X(\un{i})=X_{i_\ell}^+X_{i_{\ell-1}}^+\cdots X_{i_1}^+ \in \mathcal{A}(\mathscr{L}).
\end{equation}
Recall from Section \ref{sec:ideal-characterization} that $\mathcal{A}(\mathscr{L})$, being a TGWA, carries
an anti-automorphism $\ast$ of order two, uniquely determined by
\[H^\ast=H, \qquad (X_i^\pm)^\ast=X_i^\mp, \qquad (a+b)^\ast= a^\ast+b^\ast,\qquad (ab)^\ast=b^\ast a^\ast,\qquad \mu^\ast = \mu\]
for $i=1,2$ and all $a,b\in\mathcal{A}(\mathscr{L})$, $\mu\in \C$. In particular, 
\[
X(\un{i})^\ast = X_{i_1}^-X_{i_2}^-\cdots X_{i_\ell}^-.
\]
Note that $X(\un{i})^\ast \cdot X(\un{i})$ has degree zero and therefore can be simplified to an element of $\C[H]$.
The following lemma characterizes the zeros of these polynomials.

\begin{Lemma} \label{lem:edge-crossing}
Let $\un{i}\in\mathsf{Seq}_2$ be a binary sequence and $\la\in F$ a point in the face lattice. Then the following are equivalent:
\begin{enumerate}[{\rm (i)}]
\item $X(\un{i})^\ast \cdot  X(\un{i})$ belongs to the principal ideal $(H-\la)$ of $\C[H]$;
\item the based face lattice path $\bar\pi(\un{i},\la)$ intersects an edge in $\overline{\mathscr{L}}$.
\end{enumerate}
\end{Lemma}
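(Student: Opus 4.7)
My plan is to reduce both sides of the equivalence to a common concrete statement about zeros of a polynomial in $H$. First I will compute $X(\un{i})^\ast\cdot X(\un{i})$ explicitly by iterated telescoping from the inside, repeatedly applying the identities $X_j^- X_j^+ = P_j^\mathscr{L}(H+\al_j/2)$ and $X_j^- f(H) = f(H+\al_j) X_j^-$ for any polynomial $f$, both immediate consequences of the defining relations \eqref{eq:Aalbepq-rels}. A straightforward induction on $\ell = \ell(\un{i})$ should yield the formula
\[
X(\un{i})^\ast\cdot X(\un{i}) \;=\; \prod_{k=1}^{\ell} P^\mathscr{L}_{i_k}\!\Bigl(H + \sum_{j<k}\al_{i_j} + \tfrac{1}{2}\al_{i_k}\Bigr).
\]

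In particular the right-hand side lies in $\C[H]$, and condition (i) is equivalent to the vanishing of at least one factor in this product when $H=\la$. Using the definition $P_i^\mathscr{L}(u)=\prod_{e\in E_i}(u-e)^{\mathscr{L}_i(e)}$, this amounts to the existence of an index $k\in\{1,\ldots,\ell\}$ and an edge midpoint $e\in E_{i_k}$ with $\mathscr{L}_{i_k}(e)>0$ and $e = \la+\sum_{j<k}\al_{i_j}+\tfrac{1}{2}\al_{i_k}$.

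Next I will translate this algebraic condition into condition (ii) geometrically. Under the identification $\mathsf{E}_i/\Ga_{m,n}\leftrightarrow E_i$ sending $(x_1,x_2)+\Ga_{m,n}$ to $x_1\al_1+x_2\al_2$, the element $\la+\sum_{j<k}\al_{i_j}+\tfrac{1}{2}\al_{i_k}$ is precisely the midpoint of the $k$-th segment of the based face path $\bar\pi(\un{i},\la)$ in $\T_{m,n}$: starting at $\bar\la$, after $k-1$ unit steps in directions $\Be_{i_1},\ldots,\Be_{i_{k-1}}$ and then a half-step $\tfrac{1}{2}\Be_{i_k}$, one lands exactly on this midpoint. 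Each face-to-face segment of the path is perpendicular to, and meets in exactly one point, the unique lattice edge whose midpoint it passes through, and it is disjoint from every other edge. Consequently $\bar\pi(\un{i},\la)$ intersects $\overline{\mathscr{L}}$ if and only if some segment midpoint coincides with the midpoint of an edge with positive $\mathscr{L}$-value, which is exactly the condition extracted from (i). This establishes (i)$\Leftrightarrow$(ii).

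The main obstacle will be the bookkeeping in deriving the closed-form formula for $X(\un{i})^\ast X(\un{i})$: one must carefully track how each factor $P^\mathscr{L}_{i_k}(H+\al_{i_k}/2)$, produced when the innermost $X_{i_k}^-X_{i_k}^+$ collapses, accumulates the shift $\sum_{j<k}\al_{i_j}$ as it is conjugated outward through the remaining $X_{i_{k-1}}^-,\ldots,X_{i_1}^-$. Once the formula is in hand, the geometric reinterpretation is immediate from the definitions of $\bar\pi(\un{i},\la)$ and $\overline{\mathscr{L}}$.
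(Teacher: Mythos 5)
Your proposal is correct and follows essentially the same route as the paper: telescoping $X(\un{i})^\ast X(\un{i})$ into the product $\prod_{k=1}^{\ell} P^\mathscr{L}_{i_k}\bigl(H+\sum_{j<k}\al_{i_j}+\tfrac12\al_{i_k}\bigr)$, using primality of $(H-\la)$ to reduce to the vanishing of one factor, and identifying the corresponding shift with the midpoint of the $k$-th segment of $\bar\pi(\un{i},\la)$. The paper phrases the telescoping via the automorphisms $\si_i$ rather than writing the closed product, but the computation and the UFD argument are identical.
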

\begin{proof} For $j\in\{1,2\}$, let $\si_j$ denote the automorphism of $\C[H]$ given by $\si_j(H)=H-\al_j$. Since $[H,X_i^\pm]=\pm\al_i X_i^\pm$ we have $X_i^\pm f(H) = \si_i^{\pm 1}\big(f(H)\big) X_i^\pm$ for any polynomial $f$. We have
\begin{align*}
&\phantom{\Longleftrightarrow} X^-_{i_1}X^-_{i_2}\cdots X^-_{i_\ell}\cdot X^+_{i_\ell}X^+_{i_{\ell-1}}\cdots X^+_{i_1} \in (H-\la)  \\
&\Longleftrightarrow
(\si_{i_1}\si_{i_2}\cdots\si_{i_{\ell-1}})^{-1}(X^-_{i_\ell}X^+_{i_\ell})\cdot (\si_{i_1}\si_{i_2} \cdots \si_{i_{\ell-2}})^{-1}(X^-_{i_{\ell-1}}X^+_{i_{\ell-1}}) \cdots X^-_{i_1}X^+_{i_1} \in (H-\la) \\
&\Longleftrightarrow 
\exists r\in\iv{1}{\ell}:\; X^-_{i_r}X^+_{i_r}\in \si_{i_1}\si_{i_2}\cdots \si_{i_{r-1}}\big( (H-\la)\big) \\
&\Longleftrightarrow 
\exists r\in\iv{1}{\ell}:\; P^\mathscr{L}_{i_r}(H+\al_{i_r}/2)\in \big(H-(\la+\al_{i_1}+\al_{i_2}+\cdots+\al_{i_{r-1}})\big) \\
&\Longleftrightarrow 
\exists r\in\iv{1}{\ell}:\; P^\mathscr{L}_{i_r}(\la+\al_{i_1}+\al_{i_2}+\cdots+\al_{i_{r-1}}+\al_{i_r}/2)=0 \\
&\Longleftrightarrow 
\exists r\in\iv{1}{\ell}:\; \mathscr{L}_{i_r}(\la+\al_{i_1}+\al_{i_2}+\cdots+\al_{i_{r-1}}+\al_{i_r}/2)>0 \\
&\Longleftrightarrow 
\text{The path $\bar\pi(\un{i},\la)$ intersects an edge from $\overline{\mathscr{L}}$.}
\end{align*}
\end{proof}

As a consequence we obtain the following description of the cyclic subquotients $\CC_\la$ and their gradation radicals $\CJ_\la$. 

\begin{Proposition} \label{prp:B_la-J_la-Description}
Let $\la\in F$ and $D$ be a connected component of $\T_{m,n}\setminus\overline{\mathscr{L}}$ containing $\bar\la$.
\begin{enumerate}[{\rm (a)}]
\item If $D$ is contractible, then $\CJ_\la=\bigoplus_{k\in \Z\setminus\{0\}} (\CC_\la)_{(km,kn)}$ and $\CC_\la\simeq \C\oplus \CJ_\la$.
\item If $D$ is incontractible, then $\CJ_\la=\{0\}$ and $\CC_\la\simeq\C[L,L^{-1}]$ which is a Laurent polynomial algebra in one indeterminate $L$ of degree $(m,n)$.
\end{enumerate}
\end{Proposition}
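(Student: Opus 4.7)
The plan is to understand the $G_0$-graded structure of $\CC_\la$ well enough to read off its gradation radical. Since $R=\C[H]$ is a PID, Theorem \ref{thm:PIDCGR} makes $\CA$ a crystalline graded ring, so each $\CA_g$ for $g=(km,kn)\in G_0$ is a free $R$-module of rank one with some generator $a_g$. Because $m\al_1+n\al_2=0$, $\si_g$ acts trivially on $R$, so $a_g$ commutes with $R$ and $\CC=\bigoplus_{k\in\Z} R\cdot a_g$ is commutative with $a_g\cdot a_{-g}=c_g(H)$ for a polynomial $c_g\in R$. Passing to the quotient $\CC_\la=\CC/(H-\la)$, every $(\CC_\la)_g$ is one-dimensional, spanned by the class $\bar a_g$ of $a_g$, and $\bar a_g\in \CJ_\la$ iff $\bar a_g\cdot \bar a_{-g}=c_g(\la)=0$. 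The whole proposition thus reduces to determining, for each $k\in\Z\setminus\{0\}$, whether $c_g(\la)$ vanishes.

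To answer this I would connect $c_g(\la)$ to Lemma \ref{lem:edge-crossing}. For any $\un{i}\in\mathsf{Seq}_2(km,kn)$, write $X(\un{i})=r_{\un{i}}(H)a_g$; then $X(\un{i})^\ast X(\un{i})=r_{\un{i}}(H)^2 c_g(H)$. Since the $X(\un{i})$ collectively span the rank-one free module $\CA_g=R\cdot a_g$, the gcd of $\{r_{\un{i}}\}$ is a unit in $R$, so at least one $r_{\un{i}}(\la)$ is nonzero. Combined with Lemma \ref{lem:edge-crossing}, this shows $c_g(\la)\neq 0$ iff there exists $\un{i}\in\mathsf{Seq}_2(km,kn)$ whose based face path $\bar\pi(\un{i},\la)$ avoids $\overline{\mathscr{L}}$ entirely, equivalently, lies inside the component $D$ containing $\bar\la$.

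The final step is a topological dichotomy on $\T_{m,n}$. If $D$ is contractible, no loop in $D$ can represent a nontrivial class in $\pi_1(\T_{m,n})\cong \Z$, but the image of $\bar\pi(\un{i},\la)$ for $\un{i}\in\mathsf{Seq}_2(km,kn)$ with $k\neq 0$ wraps $k$ times around the cylinder, so no such path lies in $D$; hence $c_g(\la)=0$ for every $k\neq 0$, which together with the algebra decomposition $\CC_\la=(\CC_\la)_0\oplus \CJ_\la=\C\oplus \CJ_\la$ yields part (a). If instead $D$ is incontractible, it suffices to produce a single $\un{i}\in\mathsf{Seq}_2(m,n)$ whose face path stays inside $D$; concatenating then handles all $k>0$, and applying the involution $\ast$ covers $k<0$. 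The existence of this single monotone path is the main obstacle and the only genuinely combinatorial step. I would invoke the decomposition of $\mathscr{L}$ into a multiset of non-crossing monotone vertex paths (underlying Theorem \ref{thm:HarRos-classification}), so that the lift of an incontractible $D$ is an unbounded strip in $\R^2$ bounded by two such monotone paths; a monotone face path from $(x_1,x_2)$ to $(x_1+m,x_2+n)$ inside this strip can then be built by hugging one of the bounding paths and crossing only edges absent from $[\mathscr{L}]$. Once $c_g(\la)\neq 0$ for all $k$, $\bar a_g$ becomes invertible in $\CC_\la$ with inverse $c_g(\la)^{-1}\bar a_{-g}$, whence $\CC_\la\simeq \C[L,L^{-1}]$ with $L=\bar a_{(m,n)}$ of degree $(m,n)$, giving part (b).
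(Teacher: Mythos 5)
Your proof is correct, and it reaches the conclusion by a genuinely different algebraic route than the paper's, although the combinatorial/topological inputs (Lemma \ref{lem:edge-crossing} plus the observation that a loop winding around $\T_{m,n}$ cannot stay inside a contractible component) are the same.

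The paper works directly with the monomials $X(\un{i})$: for $a\in(\CC_\la)_{(km,kn)}$ with $k>0$ it computes $(X(\un{i})^\ast a)^2 = (aa^\ast)\,X(\un{i})^\ast X(\un{i})$ in the commutative algebra $\CC_\la$, concludes this is zero by Lemma \ref{lem:edge-crossing}, and since $(\CC_\la)_0=\C$ is reduced, gets $X(\un{i})^\ast a=0$ for every $\un{i}\in\mathsf{Seq}_2(km,kn)$; because those span $(\CC_\la)_{-(km,kn)}$, $a$ lies in the gradation radical. This squaring-plus-involution trick avoids any appeal to the crystalline graded ring structure and works without knowing that each $(\CC_\la)_g$ is one-dimensional. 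Your argument instead invokes Theorem \ref{thm:PIDCGR} to write $\CA_g=R a_g$, reduces the computation to a single structure polynomial $c_g=a_g a_{-g}$, and uses a gcd argument (the $r_{\un{i}}$ generate the unit ideal of the PID $R$) to translate vanishing of $c_g(\la)$ into the edge-crossing criterion of Lemma \ref{lem:edge-crossing}. This makes the structure of $\CC_\la$ completely explicit (each graded piece is a line), at the cost of front-loading the CGR theorem; that is a reasonable trade, especially since the paper proves Theorem \ref{thm:PIDCGR} anyway and uses it elsewhere. Your treatment of (b) is also more explicit than the paper's — the paper merely asserts the existence of a degree-$(m,n)$ face path inside the incontractible component, whereas you sketch why one exists by lifting $D$ to a strip in $\R^2$ bounded by monotone vertex paths; your sketch conflates the bounding curves with actual elements of $\Pi(\mathscr{L})$ (they need not be single paths of the multiset when paths may share vertices), but the monotonicity of the boundary is all that is needed, so the idea is sound. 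One minor slip: $a_g^\ast$ is only a unit multiple of $a_{-g}$, so $X(\un{i})^\ast X(\un{i})=u\,r_{\un{i}}(H)^2 c_g(H)$ for some $u\in\C^\times$; this has no effect on the vanishing argument.
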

\begin{proof}
(a) Suppose $D$ is contractible. Let $k\in\Z, k\neq 0$, and let $a\in (\CC_\la)_{(km,kn)}$. After applying the involution $\ast$ if necessary, we may assume that $k>0$. Let $\un{i}\in\mathsf{Seq}_2(km,kn)$ and consider the element $X(\un{i})^\ast a$. It has degree zero and hence belongs to $C[H]$. Thus its square is 
\[(X(\un{i})^\ast  a)^2 = (X(\un{i})^\ast  a) (X(\un{i})^\ast  a)^\ast  = 
X(\un{i})^\ast  a  a^\ast  X(\un{i}) = a a^\ast  X(\un{i})^\ast X(\un{i})\]
which is equal to zero by Lemma \ref{lem:edge-crossing}. Therefore $X(\un{i})^\ast  a=0$ for all $\un{i}\in\mathsf{Seq}_2(km,kn)$. This implies that $a$ belongs to the gradation radical $\CJ_\la$ as desired.

(b) If $D$ is incontractible, then there exists a sequence $\un{i}\in\mathsf{Seq}_2(m,n)$ such that the based face lattice path $\bar\pi(\un{i},\la)$ doesn't intersect any edge in $\overline{\mathscr{L}}$. By Lemma \ref{lem:edge-crossing}, $X(\un{i})^\ast\cdot  X(\un{i})\notin (H-\la)$ in $\CC$. Thus, the image $L$ of $X(\un{i})$ in $\CC_\la$ is an invertible element of degree $(m,n)$.
\end{proof}

\begin{Corollary}\label{cor:modules}
There is a bijective correspondence between the isoclasses of simple integral weight $\CA$-modules without inner breaks, and the set of pairs $(D,\xi)$ where $D$ is a connected component of $\T_{m,n}\setminus\overline{\mathscr{L}}$ and $\xi\in\C$ with $\xi=0$ iff $D$ is contractible. Each nonzero weight space of such a module is one-dimensional, the support is given by $\{\la\in F\mid \bar\la\in D\}$.
\end{Corollary}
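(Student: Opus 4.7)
The plan is to deduce the corollary from the general classification Theorem \ref{thm:TGWA-modules-classification} (applied to $\CA(\mathscr{L})$ with base ring $R_\Z=\{f/g\in\C(H)\mid g(n)\neq 0\;\forall n\in\Z\}$, whose maximal ideals are $(H-\la)$, $\la\in F$), combined with the explicit description of the cyclic subquotients from Proposition \ref{prp:B_la-J_la-Description}. Theorem \ref{thm:TGWA-modules-classification} reduces the problem to (a) identifying the supports $S\in\mathscr{S}$ and (b) classifying simple $\CC_{(H-\la)}$-modules that are annihilated by $\CJ_\la$, the latter constraint being forced by Theorem \ref{thm:no-inner-breaks}.

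For step (a), I would use Lemma \ref{lem:partition}(c) together with Lemma \ref{lem:edge-crossing}. By Lemma \ref{lem:partition}(c), $S_\la$ consists of those weights $\mu=\la+g_1\al_1+g_2\al_2$ for which some homogeneous $a\in\CA_g$ satisfies $a^\ast a\notin (H-\la)$. Writing $a$ as a sum of monomials and using that $(H-\la)$ is prime, this happens iff there exists a monomial $X(\un{i})$ (with $\un{i}$ of appropriate net degree) for which $X(\un{i})^\ast X(\un{i})\notin(H-\la)$. Via Lemma \ref{lem:edge-crossing} this is equivalent to the existence of a based face path from $\bar\la$ to $\bar\mu$ in $\T_{m,n}$ that avoids $\overline{\mathscr{L}}$. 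I would then argue that connectivity by such face paths is the same as connectivity in $\T_{m,n}\setminus\overline{\mathscr{L}}$: any continuous path between face centers can, by a small transverse perturbation, be homotoped to a lattice path meeting only interiors of edges, and this path avoids $\overline{\mathscr{L}}$ precisely when it crosses no edge of $\mathscr{L}$. Thus $S_\la = \{\mu\in F\mid \bar\mu \in D\}$ where $D$ is the component of $\T_{m,n}\setminus\overline{\mathscr{L}}$ containing $\bar\la$, giving a bijection $S\leftrightarrow D$.

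For step (b), I would apply Proposition \ref{prp:B_la-J_la-Description}. If $D$ is contractible, $\CC_\la\simeq \C\oplus\CJ_\la$, so simple $\CC_\la$-modules killed by $\CJ_\la$ factor through the augmentation to $\C$ and hence form a single isoclass, a one-dimensional module; this pairs uniquely with the value $\xi=0$. If $D$ is incontractible, $\CJ_\la=0$ and $\CC_\la\simeq \C[L,L^{-1}]$ with $L$ of degree $(m,n)$; simple modules over a Laurent polynomial algebra in one variable are one-dimensional quotients $\C[L,L^{-1}]/(L-\xi)$, parameterized by $\xi\in\C^\times$. In both cases the simple $\CC_\la$-module is one-dimensional, matching the asserted parameterization.

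For the final sentence of the corollary, I would use that for a module $M=M(D,\xi)$ without inner breaks, any homogeneous $a\in\CA_g$ with $aM_\la\neq 0$ has $a^\ast a\notin(H-\la)$ by Lemma \ref{lem:NIBequiv}, hence acts invertibly on the one-dimensional space $M_\la$; consequently $a: M_\la\to M_\mu$ and $a^\ast: M_\mu\to M_\la$ are both injective, forcing $\dim M_\mu=\dim M_\la=1$ for every $\mu\in\Supp(M)$. The support equals $S_\la=\{\mu\in F\mid\bar\mu\in D\}$ by step (a). The main obstacle is the topological matching in step (a), namely verifying that two face centers lie in the same component of $\T_{m,n}\setminus\overline{\mathscr{L}}$ exactly when they are connected by an edge-path through the lattice that avoids $\overline{\mathscr{L}}$; this is where Lemma \ref{lem:edge-crossing} must be coupled with a transversality argument to bridge the algebraic condition and the geometry of the cylinder.
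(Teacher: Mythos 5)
Your proof is essentially correct and follows the same route as the paper, which deduces the corollary from Theorem \ref{thm:TGWA-modules-classification}, Lemma \ref{lem:partition}, Proposition \ref{prp:B_la-J_la-Description}, and Lemma \ref{lem:edge-crossing}. You have additionally made explicit two points that the paper leaves implicit: that the simple $\CC_\la$-modules $N$ appearing in the classification must be annihilated by $\CJ_\la$ (forced by Theorem \ref{thm:no-inner-breaks}), and that the topological connectivity in $\T_{m,n}\setminus\overline{\mathscr{L}}$ must be matched to connectivity by face-lattice paths. For the latter, one further remark is worth making: Lemma \ref{lem:edge-crossing} only concerns monomials $X(\un{i})$ built from positive generators (up-right steps), but this is sufficient because on the twisted cylinder any face point is reachable from any other by an up-right-only lattice path (adding suitable multiples of $(m,n)$ to the target displacement makes both coordinates non-negative); combined with your transversality argument this closes the loop between the algebraic and topological descriptions of $S_\la$.
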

\begin{proof}
Follows directly from Lemma \ref{lem:edge-crossing}, Lemma \ref{lem:partition}, Proposition \ref{prp:B_la-J_la-Description} and Theorem \ref{thm:TGWA-modules-classification}.
\end{proof}

To prove Theorem \ref{thm:B}(i)-(ii), it remains to show that if $M$ is any simple integral weight $\CA$-module, then $M$ has no inner breaks. In view of Theorem \ref{thm:no-inner-breaks}, this is equivalent to showing that $\CJ_\la M_\la=0$ for any simple integral weight $\CA$-module $M$. This in turn is equivalent to showing that $\CJ_\la$ is equal to the nil radical of $\CC_\la$.

\subsection{The nilradical of $\CC_\la$}
This subsection is the technical heart of the paper. The goal is to establish that $\CJ_\la$ is a nil ideal for every $\la\in F$. That is, that every element of $\CJ_\la$ is nilpotent. First we reduce to the case of elements of degree $(m,n)$.
\begin{Lemma} \label{lem:nilpotency-reduction}
Let $\la\in F$. If $X(\un{i})$ is nilpotent in $\CC_\la$ for all $\un{i}\in \mathsf{Seq}_2(m,n)$, then $\CJ_\la$ is nil.
\end{Lemma}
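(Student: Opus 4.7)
The plan is to use commutativity of $\CC_\la$ together with the $\ast$-involution to reduce the statement to showing individual elements are nilpotent, and then invoke the CGR structure to finish.

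First I would observe that $\CC_\la$ is commutative and $\CJ_\la$ is a graded subspace. Since a finite sum of nilpotent elements in a commutative ring is again nilpotent, it suffices to show that each homogeneous component $(\CC_\la)_{(km,kn)}$ with $k\neq 0$ consists of nilpotent elements. The involution $\ast$ interchanges opposite-degree components while preserving nilpotency, so one may further restrict to $k>0$.

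Second, since $\CA_{(km,kn)}$ is spanned as a left $R$-module by $\{X(\un{i}):\un{i}\in\mathsf{Seq}_2(km,kn)\}$ and $R$ acts through the scalar $H=\la$ in $\CC_\la$, the space $(\CC_\la)_{(km,kn)}$ is $\C$-linearly spanned by $\{[X(\un{i})]:\un{i}\in\mathsf{Seq}_2(km,kn)\}$. Applying the sum-of-nilpotents principle once more leaves the task of showing each $[X(\un{i})]$ is nilpotent.

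Third, for such an $\un{i}$ I would use Theorem \ref{thm:PIDCGR}, which gives $\CA_{(km,kn)}\simeq R$ as a left $R$-module. Fixing any $\un{j}_0\in\mathsf{Seq}_2(m,n)$, both $X(\un{i})$ and $X(\un{j}_0^k)=X(\un{j}_0)^k$ lie in this rank-one module, so there exist polynomials $p,q\in R$ with $X(\un{i})\cdot p(H)=q(H)\cdot X(\un{j}_0)^k$ in $\CA$. Reducing modulo $(H-\la)$ yields
\[
p(\la)\cdot [X(\un{i})]\;=\;q(\la)\cdot [X(\un{j}_0)]^k\quad\text{in }\CC_\la,
\]
and when $p(\la)\neq 0$ this realizes $[X(\un{i})]$ as a scalar multiple of a power of the nilpotent $[X(\un{j}_0)]$, which is itself nilpotent by commutativity of $\CC_\la$.

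The hard part will be guaranteeing that for a suitable $\un{j}_0\in\mathsf{Seq}_2(m,n)$ one has $p(\la)\neq 0$. My plan is to exploit one-dimensionality of $(\CC_\la)_{(m,n)}$ (also from Theorem \ref{thm:PIDCGR}): since this space is spanned by the $[X(\un{j})]$ and is nonzero, at least one $[X(\un{j}_0)]\neq 0$, forcing $r_{\un{j}_0}(\la)\neq 0$ in the CGR decomposition $X(\un{j}_0)=r_{\un{j}_0}(H)\,a_1$. Tracking this through the exchange relations of the TGWA (which are the only moves needed to pass between monomials of the same degree) pins down how $p$ is built from shifts of the polynomials $P_i^{\mathscr{L}}$, and in the residual degenerate cases where every such choice gives $p(\la)=0$ one shows directly that $[X(\un{i})]=0$ in $\CC_\la$, so that $[X(\un{i})]$ is trivially nilpotent.
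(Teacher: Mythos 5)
Your opening reductions --- commutativity of $\CC_\la$, the $\ast$-involution to restrict to $k>0$, and the fact that $(\CC_\la)_{(km,kn)}$ is $\C$-spanned by the images $[X(\un{i})]$ for $\un{i}\in\mathsf{Seq}_2(km,kn)$ --- all match the paper and are fine. The gap is in the final step, where you try to write $X(\un{i})\,p(H)=q(H)\,X(\un{j}_0)^k$ via the CGR basis and then invert $p(\la)$. You acknowledge the degenerate case $p(\la)=0$, but your fallback --- ``one shows directly that $[X(\un{i})]=0$'' --- does not hold. By Theorem~\ref{thm:PIDCGR} the space $(\CC_\la)_{(km,kn)}\cong R/(H-\la)$ is always one-dimensional and nonzero, spanned by the image $[a_k]$ of the CGR generator, so $[X(\un{i})]$ is typically a \emph{nonzero} multiple of $[a_k]$ even when $p(\la)=0$. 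Worse, $p(\la)=0$ is unavoidable for large $k$: your $\un{j}_0$ is chosen so that $[X(\un{j}_0)]\ne 0$, but $[X(\un{j}_0)]$ is nilpotent by hypothesis, so $[X(\un{j}_0)]^k=0$ once $k$ exceeds the nilpotency index, which forces $p(\la)=0$. And nilpotency of $[a_1]$ does not propagate to nilpotency of $[a_k]$ for $k>1$: one only knows $[a_1]^k=c\,[a_k]$ for a scalar $c$ that may be zero. So the argument stalls exactly where it needs to conclude.

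The paper bridges this gap with a purely combinatorial induction that avoids the CGR coefficients entirely. For $\un{j}\in\mathsf{Seq}_2(km,kn)$ with $k>1$, cyclic reducibility of Dyck-type words (\cite[Lem.~3.4]{HarRos2016}) gives a factorization $\un{j}=\un{j}'\,\un{i}\,\un{j}''$ with $\un{i}\in\mathsf{Seq}_2(m,n)$ and $\un{j}'\un{j}''\in\mathsf{Seq}_2\big((k-1)m,(k-1)n\big)$. Setting $a=X(\un{j}')$, $b=X(\un{i})$, $c=X(\un{j}'')$, the elements $b$, $ac$, $ca$ all lie in the commutative algebra $\CC$, which yields the identity
\[
(abc)^N \;=\; a\,(bca)^{N-1}\,bc \;=\; a\,(ca)^{N-1}\,b^{N}\,c \;=\; (ac)^{N-1}\,a\,b^{N}\,c
\]
in $\CC$. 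Nilpotency of $ac$ (induction hypothesis on $k-1$) then immediately gives nilpotency of $abc=X(\un{j})$, with base case $k=1$ being exactly the hypothesis of the Lemma. This is the idea your proposal is missing; the CGR structure you invoke is the right tool elsewhere in the paper (e.g.\ Theorem~\ref{thm:C}), but not for this reduction.
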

\begin{proof}
If $\bar\la$ belongs to a contractible component of $\T_{m,n}\setminus\overline{\mathscr{L}}$, then $\CJ_\la=\{0\}$. So suppose $\bar\la$ belongs to an incontractible component.
Since $\CC_\la$ is a commutative $G_0$-graded algebra, it suffices to prove that any homogeneous element of nonzero degree is nilpotent. First we prove that $X(\un{j})$ is nilpotent in $\CC_\la$ for all $\un{j}\in \cup_{k=1}^\infty \mathsf{Seq}_2(km,kn)$.

Let $\un{j}\in \cup_{k=1}^\infty \mathsf{Seq}_2(km,kn)$. Then $\ell(\un{j})=km+kn$ for some $k\in\Z_{>0}$. We prove that $X(\un{j})$ is nilpotent by induction on $k$. For $k=1$ this is true by assumption. For $k>1$, $\un{j}$ is cyclically reducible (see e.g. \cite[Lem.~3.4]{HarRos2016}), meaning that $\un{j}=\un{j}'\un{i}\un{j}''$ for some $\un{i}\in\mathsf{Seq}_2(m,n)$ and some sequences $\un{j}', \un{j}''\in\mathsf{Seq}_2$ such that $\un{j}'\un{j}''\in \cup_{k=1}^\infty \mathsf{Seq}_2(km,kn)$.
Put $a=X(\un{j}')$, $b=X(\un{i})$, $c=X(\un{j}'')\in A$.
Since the cyclic subalgebra $\CC$ is commutative and $b=X(\un{i}), ac=X(\un{j}'\un{j}''), ca=X(\un{j}''\un{j}')\in \CC$ we have,
\begin{equation}\label{eq:nilpotency}
(abc)^N=a(bca)^{N-1}bc=a(ca)^{N-1}b^nc=(ac)^{N-1}ab^nc
\end{equation}
in $\CC$. Since $\ell(\un{j}'\un{j}'')=(k-1)(m+n)$, the induction hypothesis implies that $ac$ is nilpotent in $\CC_\la$. By \eqref{eq:nilpotency}, this implies that $abc$ is nilpotent in $\CC_\la$.

By applying the involution we also get the same result for $X(\un{j})^\ast$. Finally, let $a$ be an arbitrary homogeneous element of $\CC$ of degree $(km,kn)$, $k\neq 0$. Then $a$ is a $\C[H]$-linear combination of elements of the form $X(\un{j})$ and $X(\un{j})^\ast$, where $\un{j}\in \mathsf{Seq}_2(km,kn)$, and thus $a$ is nilpotent.
\end{proof}

In order to proceed we need to introduce some combinatorial quantities.
\begin{Definition}[Order of a vertex]
The \emph{order} of a vertex $v\in V$ with respect to $\mathscr{L}$ is
\begin{align*}
\ord(v)=\ord_\mathscr{L}(v)&=\mathscr{L}_2(v+\al_1/2)-\mathscr{L}_2(v-\al_1/2)
 =(\#\text{edges right of $v$}) - (\#\text{edges left of $v$})\\
&=\mathscr{L}_1(v-\al_2/2)-\mathscr{L}_1(v+\al_2/2)
 =(\#\text{edges below $v$}) - (\#\text{edges above $v$})
\end{align*}
The fourth equality is due to the ice rule \eqref{eq:ice-rule-alpha}.
A vertex $v\in V$ is called a \emph{corner} if its order is nonzero. 
See Figures \ref{fig:order-def} and \ref{fig:53-example-with-orders}.
\end{Definition}
Note that the definition of $\ord(v)$ breaks the $(1\leftrightarrow 2)$-symmetry. The dual way to define it would equal $-\ord(v)$.

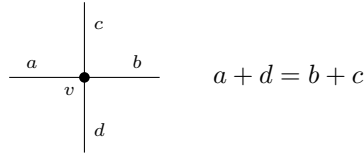
\begin{figure}
\centering 
\[
\begin{tikzpicture}[baseline={([yshift=-.5ex]current bounding box.center)}]
\draw (-1, 0)--( 1, 0);
\draw ( 0,-1)--( 0, 1);
\fill (0,0) circle (2pt);
\node[font=\scriptsize, anchor=north east] at (0,0) {$v$};
\node[font=\scriptsize, anchor=south] at (-.7,0) {$a$};
\node[font=\scriptsize, anchor=south] at ( .7,0) {$b$};
\node[font=\scriptsize, anchor=west] at (0, .7) {$c$};
\node[font=\scriptsize, anchor=west] at (0,-.7) {$d$};
\end{tikzpicture}
\qquad a+d=b+c 
\]
\caption{The order of $v$ is defined as $\ord(v)=b-a=d-c$.}
\label{fig:order-def}
\end{figure}

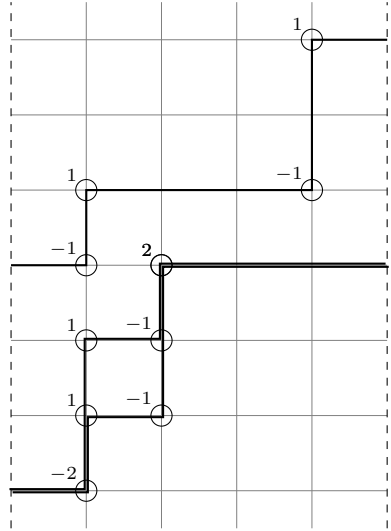
\begin{figure}
\centering
\[
\begin{tikzpicture}
\foreach \x in {0,...,3} { \draw[help lines] (\x,-.5)--(\x,6.5); }
\foreach \y in {0,...,6} { \draw[help lines] (-1,\y)--(4,\y); }
\draw[dashed] (-1,-.5)--(-1,6.5);
\draw[dashed] (4,-.5)--(4,6.5);
\draw[thick,xshift=.6pt,yshift=-.6pt] (-1,0)--(0,0)--(0,1)--(1,1)--(1,2)--(1,3)--(2,3)--(3,3)--(4,3);
\draw[thick,xshift=-.6pt,yshift=.6pt] (-1,0)--(0,0)--(0,1)--(0,2)--(1,2)--(1,3)--(2,3)--(3,3)--(4,3);
\draw[thick] (-1,3)--(0,3)--(0,4)--(1,4)--(2,4)--(3,4)--(3,5)--(3,6)--(4,6);
\draw (0,0) circle (4pt) node[above left] {${\scriptstyle -2}$}; 
\draw (0,1) circle (4pt) node[above left] {${\scriptstyle 1}$};
\draw (0,2) circle (4pt) node[above left] {${\scriptstyle 1}$};
\draw (0,3) circle (4pt) node[above left] {${\scriptstyle -1}$};
\draw (0,4) circle (4pt) node[above left] {${\scriptstyle 1}$};
\draw (1,1) circle (4pt) node[above left] {${\scriptstyle -1}$};
\draw (1,2) circle (4pt) node[above left] {${\scriptstyle -1}$};
\draw (1,3) circle (4pt) node[above left] {${\scriptstyle 2}$};
\draw (1,3) circle (4pt) node[above left] {${\scriptstyle 2}$};
\draw (3,4) circle (4pt) node[above left] {${\scriptstyle -1}$};
\draw (3,6) circle (4pt) node[above left] {${\scriptstyle 1}$};
\end{tikzpicture}
\]
\caption{A corner is a vertex of nonzero order. Here a $(5,3)$-periodic configuration $\mathscr{L}$ is shown where the corners have been circled and their respective order indicated.}
\label{fig:53-example-with-orders}
\end{figure}

\begin{Definition}[Order of a face path]
Let $\un{i}\in\mathsf{Seq}_2$ and $\la\in F$ and consider the corresponding based face path
 $\pi(\un{i},\la)$ in the face lattice $\Z^2$ of the plane. We define the \emph{order} of such a path to be
\begin{equation}
\ord(\un{i},\la)= \sum \ord(v)
\end{equation}
where we sum over all vertices $v\in V$ that lie directly above some horizontal edge or directly to the left of some vertical edge of the path $\pi(\un{i},\la)$. 
See Figure \ref{fig:53-example-with-face-path}.
\end{Definition}

Note that in this situation we think of the lattice path as a path in the plane rather than on the cylinder. This becomes relevant for situations where the path would otherwise wrap around the cylinder several times.

\begin{figure}
\centering
\[
\begin{tikzpicture}
\foreach \x in {0,...,3} { \draw[help lines] (\x,-.5)--(\x,6.5); }
\foreach \y in {0,...,6} { \draw[help lines] (-1,\y)--(4,\y); }
\draw[dashed] (-1,-.5)--(-1,6.5);
\draw[dashed] (4,-.5)--(4,6.5);
\draw[thick,xshift=.6pt,yshift=-.6pt] (-1,0)--(0,0)--(0,1)--(1,1)--(1,2)--(1,3)--(2,3)--(3,3)--(4,3);
\draw[thick,xshift=-.6pt,yshift=.6pt] (-1,0)--(0,0)--(0,1)--(0,2)--(1,2)--(1,3)--(2,3)--(3,3)--(4,3);
\draw[thick] (-1,3)--(0,3)--(0,4)--(1,4)--(2,4)--(3,4)--(3,5)--(3,6)--(4,6);
\draw[thick,Blue] (-1,.5)--(-.5,.5)--(-.5,1.5)--(.5,1.5)--(1.5,1.5)--(1.5,2.5)--(2.5,2.5)--(2.5,3.5)--(3.5,3.5)--(4,3.5);
\draw (-.5cm-2pt,1.5cm-2pt)--(-.5cm+2pt,1.5cm+2pt);
\draw (-.5cm+2pt,1.5cm-2pt)--(-.5cm-2pt,1.5cm+2pt);
\draw (-.5,1.5) node[above] {${\scriptstyle \bar\lambda}$};
\draw (.5cm-2pt,1.5cm-2pt)--(.5cm+2pt,1.5cm+2pt);
\draw (.5cm+2pt,1.5cm-2pt)--(.5cm-2pt,1.5cm+2pt);
\draw (.5,1.5) node[above] {${\scriptstyle \bar\mu}$};
\draw (0,0) circle (4pt) node[above left] {${\scriptstyle -2}$}; 
\draw (0,1) circle (4pt) node[above left] {${\scriptstyle 1}$};
\draw (0,2) circle (4pt) node[above left] {${\scriptstyle 1}$};
\draw (0,3) circle (4pt) node[above left] {${\scriptstyle -1}$};
\draw (0,4) circle (4pt) node[above left] {${\scriptstyle 1}$};
\draw (1,1) circle (4pt) node[above left] {${\scriptstyle -1}$};
\draw (1,2) circle (4pt) node[above left] {${\scriptstyle -1}$};
\draw (1,3) circle (4pt) node[above left] {${\scriptstyle 2}$};
\draw (1,3) circle (4pt) node[above left] {${\scriptstyle 2}$};
\draw (3,4) circle (4pt) node[above left] {${\scriptstyle -1}$};
\draw (3,6) circle (4pt) node[above left] {${\scriptstyle 1}$};
\end{tikzpicture}
\]
\caption{A $(5,3)$-periodic configuration $\mathscr{L}$. The order of the blue path in the face lattice is the sum of all orders of corners above it, which in this case equals $2$. This is also equal to the number of horizontal (respectively vertical) edges that the face path crosses, taken with multiplicity.}
\label{fig:53-example-with-face-path}
\end{figure}

Notice that the face lattice path $\pi(\un{i},\la)$ in Figure \ref{fig:53-example-with-face-path} crosses two vertical and two horizontal edges, and that $\ord(\un{i},\la)=2$. We show that this is a general phenomenon. This gives an equivalent way of defining the order of a path.

\begin{Lemma}\label{lem:order}
The order of a face path is equal to $\max(v,h)$ where $v$ (respectively $h$) is the number of vertical (respectively horizontal) edges in $\mathscr{L}$ that it crosses.
\end{Lemma}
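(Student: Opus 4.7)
The plan is to prove this combinatorial identity by induction on the length $\ell(\un{i}) = \ell_1(\un{i}) + \ell_2(\un{i})$ of the face path, with the base case $\ell = 0$ being immediate since the trivial path crosses no edges and has $\ord = v = h = 0$.

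For the inductive step, I would extend a face path $\pi'$ of length $\ell - 1$ by one step. By the symmetric role of the two directions in the setup, one may assume the new step is in direction $1$, crossing a new vertical edge $e$ of the underlying lattice, so that $v_\pi = v_{\pi'} + \mathscr{L}_1(e)$ and $h_\pi = h_{\pi'}$. The core computation is then $\ord(\pi) - \ord(\pi')$, which measures the contribution of vertices that become directly to the left of $e$ and were not already counted through some prior edge of $\pi'$. Using the ice rule \eqref{eq:ice-rule-alpha} at the two endpoint vertices of $e$, I would rewrite $\mathscr{L}_1(e)$ as a difference of horizontal edge multiplicities above and below $e$, and then show after a local case analysis on the direction of $\pi'$'s last step that the change in $\ord$ exactly matches the change $\max(v_\pi, h_\pi) - \max(v_{\pi'}, h_{\pi'})$ predicted by the right-hand side.

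The main obstacle I expect is the careful bookkeeping in the case when $\pi'$ ends with a step in direction $1$: the ``left-of'' vertex sets of the last edge of $\pi'$ and of $e$ overlap, and the shared vertices must not be double-counted, while the ice rule at the intervening vertex must produce the correct cancellation of signed contributions. As an alternative route that I would try in parallel, I would decompose $\mathscr{L}$ as a non-crossing multi-set of directed vertex paths via the ice rule (viewed as a divergence-free integer flow on the lattice), prove the identity one vertex path $P$ at a time by tracking its local geometric interaction with $\pi$, and then sum over $P$ to obtain the full claim; this has the advantage of making the combinatorial content transparent via vertex-path crossings, and in particular makes it easy to see why only $\max(v_P, h_P)$ and not $v_P + h_P$ appears, since consecutive crossings of the same vertex path with $\pi$ alternate between vertical and horizontal types.
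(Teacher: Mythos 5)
Your second alternative route is essentially the paper's proof. The paper reduces ``by additivity'' to the case of a single vertex path of period $(m,n)$, then further to a single face step, and finishes with a three-case inspection of where that horizontal step sits relative to the two corners of the path (above both, below both, or in between). The alternation observation you flag — that consecutive crossings of a fixed vertex path $P$ by a monotone face path $\pi$ swap between vertical and horizontal — is precisely what the paper's appeal to additivity is leaning on without saying so: it gives $v_P - h_P \in \{-1,0,1\}$, and because the vertex paths are mutually non-crossing all these surpluses carry the same sign (governed by which side of the family $\pi$ starts and ends on). That common sign is what turns $\sum_P \max(v_P,h_P)$ into $\max\bigl(\sum_P v_P, \sum_P h_P\bigr)$; without it, $\max$ is not additive and the reduction to a single path would fail.

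Your first route, induction on the length of the face path, is a genuinely different decomposition, and the obstacle you foresee is real. The ``NW'' vertex sets attached to consecutive face edges overlap (a right step followed by an up step share the vertex at the inner corner, and more distant horizontal/vertical pairs overlap as well), so the order is \emph{not} the sum of per-step contributions: for a single vertex path the naive per-step sum gives $v_P+h_P$, and the deficit $\min(v_P,h_P)$ is recovered precisely from corners sitting in these overlaps. Your induction hypothesis would therefore need to carry more than the current value of $\max(v_{\pi'},h_{\pi'})$ — it must remember which of $v_{\pi'}$, $h_{\pi'}$ is leading, since the increment of $\max$ under one more step depends on that. So the first route is workable in principle but strictly heavier in bookkeeping; the vertex-path decomposition is where I would invest, since the alternation argument is the mechanism that actually produces the $\max$, and it is the place to make the implicit sign-consistency step rigorous.
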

\begin{proof}
By additivity, it suffices to prove the statement when $\mathscr{L}$ consists of a single vertex path of period $(m,n)$. Similarly we may assume that the face path has length $1$, i.e. consists of a single horizontal or vertical step. By symmetry we can assume it is a horizontal step. 
The step crosses either above or below $\mathscr{L}$, or between two corners, see Figure \ref{fig:path-crossing}. If it crosses above, the order is zero. If it crosses below the order is $-1+1=0$ since both corners are counted. Finally, if it crosses between the corners the order is $1$ since the top corner has order $1$ and the bottom corner $-1$. 
\end{proof}

\begin{figure}
\centering 
\begin{tikzpicture}
\draw (-1,0)--(0,0)--(0,1)--(1,1);
\fill (-1,0) circle (2pt);
\fill (0,0) circle (2pt) node[above left] {${\scriptstyle -1}$};
\fill (0,1) circle (2pt) node[above left] {${\scriptstyle 1}$};
\fill (1,1) circle (2pt);
\draw[Blue,thick] (-.5,1.5)--(.5,1.5);
\draw (-.5cm-2pt,1.5cm-2pt)--(-.5cm+2pt,1.5cm+2pt);
\draw (-.5cm+2pt,1.5cm-2pt)--(-.5cm-2pt,1.5cm+2pt);
\draw (.5cm-2pt,1.5cm-2pt)--(.5cm+2pt,1.5cm+2pt);
\draw (.5cm+2pt,1.5cm-2pt)--(.5cm-2pt,1.5cm+2pt);
\end{tikzpicture}
\qquad 
\begin{tikzpicture}
\draw (-1,0)--(0,0)--(0,1)--(1,1);
\fill (-1,0) circle (2pt);
\fill (0,0) circle (2pt) node[above left] {${\scriptstyle -1}$};
\fill (0,1) circle (2pt) node[above left] {${\scriptstyle 1}$};
\fill (1,1) circle (2pt);
\draw[Blue,thick] (-.5,.5)--(.5,.5);
\draw (-.5cm-2pt,.5cm-2pt)--(-.5cm+2pt,.5cm+2pt);
\draw (-.5cm+2pt,.5cm-2pt)--(-.5cm-2pt,.5cm+2pt);
\draw (.5cm-2pt,.5cm-2pt)--(.5cm+2pt,.5cm+2pt);
\draw (.5cm+2pt,.5cm-2pt)--(.5cm-2pt,.5cm+2pt);
\end{tikzpicture}
\qquad 
\begin{tikzpicture}
\draw (-1,0)--(0,0)--(0,1)--(1,1);
\fill (-1,0) circle (2pt);
\fill (0,0) circle (2pt) node[above left] {${\scriptstyle -1}$};
\fill (0,1) circle (2pt) node[above left] {${\scriptstyle 1}$};
\fill (1,1) circle (2pt);
\draw[Blue,thick] (-.5,-.5)--(.5,-.5);
\draw (-.5cm-2pt,-.5cm-2pt)--(-.5cm+2pt,-.5cm+2pt);
\draw (-.5cm+2pt,-.5cm-2pt)--(-.5cm-2pt,-.5cm+2pt);
\draw (.5cm-2pt,-.5cm-2pt)--(.5cm+2pt,-.5cm+2pt);
\draw (.5cm+2pt,-.5cm-2pt)--(.5cm-2pt,-.5cm+2pt);
\end{tikzpicture}
\caption{Possible ways a horizontal face step can cross a vertex path $\mathscr{L}$.}
\label{fig:path-crossing}
\end{figure}
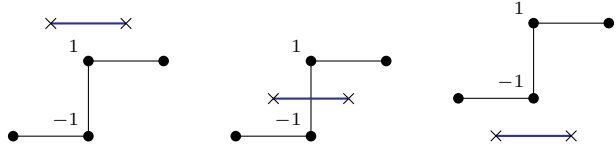

\begin{Corollary}
$\ord(\un{i},\la)\ge 0$ for all $\la\in F$ and all $\un{i}\in\mathsf{Seq}_2$.
\end{Corollary}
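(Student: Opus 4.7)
The statement is an immediate corollary of the preceding Lemma \ref{lem:order}. My plan is simply to invoke that lemma: it identifies $\ord(\un{i},\la)$ with $\max(v,h)$, where $v$ and $h$ denote the number of vertical and horizontal edges of $\mathscr{L}$, respectively, crossed by the face path $\pi(\un{i},\la)$ (counted with multiplicity given by the edge labels $\mathscr{L}_i(e)\in\N$).

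Since edge labels are non-negative integers, both $v$ and $h$ are non-negative, hence so is $\max(v,h)$. There is no real obstacle here; the entire content is packaged in Lemma \ref{lem:order}. A one-line proof suffices:
\begin{proof}
By Lemma \ref{lem:order}, $\ord(\un{i},\la)=\max(v,h)$, where $v,h\in\N$ count (with multiplicity) the vertical and horizontal edges of $\mathscr{L}$ crossed by $\pi(\un{i},\la)$. In particular $\ord(\un{i},\la)\ge 0$.
\end{proof}

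Alternatively, and perhaps more informatively for the reader, one can give a direct argument without going through the $\max(v,h)$ identity: by additivity in $\mathscr{L}$ and in the length of $\un{i}$, it reduces to a single step crossing a single period-$(m,n)$ vertex path, and the three cases in Figure \ref{fig:path-crossing} each contribute order $0$, $0$, or $1$, all non-negative. But since Lemma \ref{lem:order} is already proved, the shorter derivation above is the natural one.
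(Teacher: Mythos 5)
Your proof is correct and matches the paper's (implicit) reasoning exactly: the Corollary is stated without a written proof precisely because it follows at once from Lemma \ref{lem:order}, since $\max(v,h)$ with $v,h\in\N$ is non-negative. Nothing further is needed.
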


\begin{Corollary}\label{cor:order-positivity}
Let $\la\in F$ and $\un{i}\in\mathsf{Seq}_2(m,n)$. If the based face path $\bar\pi(\un{i},\la)$ passes through a contractible connected component $D$ of $\T_{m,n}\setminus\overline{\mathscr{L}}$, then $\ord(\un{i},\la)>0$.
\end{Corollary}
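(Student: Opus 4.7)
The plan is to argue by contradiction, translating the vanishing of $\ord(\un{i},\la)$ into a topological obstruction. Suppose for contradiction that $\ord(\un{i},\la)=0$. By Lemma~\ref{lem:order}, $\ord(\un{i},\la)=\max(v,h)$ where $v,h\in\Z_{\ge 0}$ count the vertical and horizontal edges of $\mathscr{L}$ crossed by $\bar\pi(\un i,\la)$, so in fact $v=h=0$ and the path crosses no edge of $\mathscr{L}$.

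The first step is to upgrade this combinatorial statement to the set-theoretic one $\bar\pi(\un i,\la)\cap\overline{\mathscr L}=\emptyset$. Each face segment of $\bar\pi(\un i,\la)$ is a unit segment orthogonal to the edges of $\mathscr L$ it could meet, so it intersects such an edge only at the common midpoint in $E_1\cup E_2$; the endpoints of a face segment lie in the face lattice, which is disjoint from $\overline{\mathscr L}$ (since edges of $\mathscr{L}$ have endpoints in the vertex lattice $V$). The absence of crossings therefore forces $\bar\pi(\un i,\la)$ to be disjoint from $\overline{\mathscr L}$, and being a connected subset of $\T_{m,n}\setminus\overline{\mathscr L}$ that meets $D$, it lies entirely inside the component $D$.

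Next I would view $\bar\pi(\un i,\la)$ as a loop in $\T_{m,n}$ based at $\bar\la$. Its lift $\pi(\un i,\la)$ in $\R^2$ runs from $\la$ to $\la+(m,n)$, since $\un i\in\mathsf{Seq}_2(m,n)$ contains $m$ ones and $n$ twos, and these endpoints project to the same point $\bar\la$ in $\T_{m,n}=\R^2/\Ga_{m,n}$. Because $(m,n)$ generates the deck group $\Ga_{m,n}$, this loop represents a generator of $\pi_1(\T_{m,n})\simeq\Z$ and in particular is not null-homotopic. However, $D$ is contractible, hence simply connected, so every loop inside $D$ must be null-homotopic in $\T_{m,n}$---a contradiction. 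Therefore $\ord(\un i,\la)\ne 0$, and by non-negativity of the order we conclude $\ord(\un i,\la)>0$. The only real subtlety is the first step: verifying that the combinatorial notion of ``crossing'' from Lemma~\ref{lem:order} captures exactly the set-theoretic intersections of the path with $\overline{\mathscr L}$; once this is established, the fundamental-group argument is immediate.
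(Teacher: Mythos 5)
Your proof is correct and takes essentially the same route as the paper's (the paper's proof is just the single line "$\bar\pi(\un{i},\la)$ has to cross at least one edge in $\overline{\mathscr{L}}$, otherwise it is an incontractible loop in $D$"). You have simply spelled out the details: invoking Lemma~\ref{lem:order} to translate $\ord(\un{i},\la)=0$ into the absence of edge crossings, verifying that this forces set-theoretic disjointness from $\overline{\mathscr L}$ so the path lies entirely in $D$, and then observing that the loop represents the generator of $\pi_1(\T_{m,n})\simeq\Z$, contradicting contractibility of $D$.
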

\begin{proof}
$\bar\pi(\un{i},\la)$ has to cross at least one edge in $\overline{\mathscr{L}}$, otherwise it is an incontractible loop in $D$.
\end{proof}

Next we derive useful exchange relations. Put $\CA_\mathrm{loc}:=\CA\otimes_{\C[H]}\C(H)$.

\begin{Lemma}[Exchange relation]
In $\CA_\mathrm{loc}$ we have
\begin{equation}\label{eq:flip-identity}
X_2^+X_1^+ = X_1^+X_2^+ \tilde{R}(H),
\end{equation}
where, putting $\rho=(\al_1+\al_2)/2$,
\begin{equation}
\tilde{R}(H) = \prod_{\la\in F} (H-\la)^{\ord(\la+\rho)}.
\end{equation}
\end{Lemma}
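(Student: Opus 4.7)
The plan is to obtain the exchange relation by starting from equation \eqref{eq:consistency-pf-1b} derived in the proof of Proposition \ref{prp:NKFP-Consistency}, namely
\[X_2^+X_1^+\, p_2(H+\al_2/2) = p_2(H-\al_2/2)\, X_1^+X_2^+,\]
which is an instance of the general TGWA exchange relation \eqref{eq:TGWA-exchange-relation} via the identification in Corollary \ref{cor:fiber-TGWA}. Since $p_2(H+\al_2/2)$ is a nonzero element of $\C[H]$ and we are working in $\CA_\mathrm{loc}=\CA\otimes_{\C[H]}\C(H)$, it is invertible there. Solving for $X_2^+X_1^+$ and then using the commutation rule $X_i^+ f(H)=f(H-\al_i)X_i^+$ twice to move $p_2(H-\al_2/2)$ from the left of $X_1^+X_2^+$ to the right, one finds
\[X_2^+X_1^+ = X_1^+X_2^+ \cdot \frac{p_2(H+\al_1+\al_2/2)}{p_2(H+\al_2/2)}.\]

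Next I would rewrite this rational function as a product over face points. Since $p_2=P_2^\mathscr{L}$, and the midpoints of horizontal edges satisfy $E_2=F+\al_2/2$, substituting $e=\mu+\al_2/2$ with $\mu\in F$ gives
\[p_2(H+\al_2/2)=\prod_{\mu\in F}(H-\mu)^{\mathscr{L}_2(\mu+\al_2/2)}.\]
Applying the translation $\mu\mapsto\mu-\al_1$ (which is a bijection $F\to F$ since $\al_1\in F$) in the numerator,
\[p_2(H+\al_1+\al_2/2)=\prod_{\mu\in F}(H-\mu)^{\mathscr{L}_2(\mu+\al_1+\al_2/2)}.\]
Dividing yields
\[\tilde{R}(H)=\prod_{\la\in F}(H-\la)^{\mathscr{L}_2(\la+\al_1+\al_2/2)-\mathscr{L}_2(\la+\al_2/2)}.\]

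Finally, write $\la+\al_1+\al_2/2=(\la+\rho)+\al_1/2$ and $\la+\al_2/2=(\la+\rho)-\al_1/2$, so the exponent becomes
\[\mathscr{L}_2\big((\la+\rho)+\al_1/2\big)-\mathscr{L}_2\big((\la+\rho)-\al_1/2\big)=\ord(\la+\rho)\]
directly from the definition of $\ord$. This proves the stated formula. Every step is essentially bookkeeping once the starting relation is in hand; the only mild point is verifying that the ostensibly rational $\tilde{R}(H)$ is in fact a polynomial (equivalently that $\ord(v)\geq 0$ for all corners $v$ along the support), which follows from Lemma \ref{lem:order}, but is not strictly needed for the identity in $\CA_\mathrm{loc}$.
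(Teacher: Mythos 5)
Your main argument is correct and is essentially the same as the paper's: both start from the TGWA exchange relation (you use the $p_2$ version, the paper uses the $p_1$ version, which are dual by the ice rule and Lemma~\ref{lem:fiber-isomorphisms}\eqref{it:fiber-transposition}), solve for $X_2^+X_1^+$ in $\CA_\mathrm{loc}$, re-index the product over edge midpoints as a product over face points, and match the exponents with the definition of $\ord$. One small error in your closing aside: $\tilde{R}(H)$ is \emph{not} in general a polynomial, since $\ord(v)$ can be negative for a vertex $v$ (e.g.\ the corner of order $-2$ in Figure~\ref{fig:53-example-with-orders}, or the explicit rational $\tilde{R}(H)$ with poles in the example near Figure~\ref{fig:order7}); Lemma~\ref{lem:order} gives non-negativity of $\ord(\un{i},\la)$ for face \emph{paths}, not vertices. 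You do correctly note this is irrelevant to the identity in $\CA_\mathrm{loc}$, so the proof stands.
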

\begin{proof}
 By the TGWA exchange relation \eqref{eq:TGWA-exchange-relation} or using the defining relations \eqref{eq:Aalbepq-rels} of $\CA$ we have
\[X_2^+X_1^+ P_1^\mathscr{L}(H+\rho+\al_2/2) = X_1^+X_2^+ P_1^\mathscr{L}(H+\rho-\al_2/2),\]
where $\rho=(\al_1+\al_2)/2$. Now $v=\la+\rho\in V$ is a zero of $P_1^\mathscr{L}(u-\al_2/2)$ of multiplicity $d$ iff $v-\al_2/2$ is a zero of $P_1^\mathscr{L}(u)$ of multiplicity $d$, i.e. iff $e=v-\al_2/2$ is the midpoint of a vertical edge with label $\mathscr{L}_1(e)=d$. Similarly $v\in V$ is a zero of $P_1^\mathscr{L}(u+\al_2/2)$ of multiplicity $c$ iff $\mathscr{L}_1(v+\al_2/2)=c$. Thus the factor $(u-v)$ appears with multiplicity $d-c=\ord(v)$ in $\tilde{R}(u-\rho)=P_1^\mathscr{L}(u-\al_2/2)/P_1^\mathscr{L}(u+\al_2/2)$. Substituting $u=H+\rho$, the claim is proved.
\end{proof}

\begin{Lemma}[Generalized exchange relation]\label{lem:GER}
For any pair of non-negative integers $(r,s)$ and sequences $\un{i}, \un{j}\in\mathsf{Seq}_2(r,s)$, the following identity holds in $\CA_{\mathrm{loc}}$:
\begin{equation}\label{eq:GER}
X(\un{i})=X(\un{j}) \cdot \prod_{\la\in F} (H-\la)^{\ord(\un{i},\la)-\ord(\un{j},\la)}.
\end{equation}
\end{Lemma}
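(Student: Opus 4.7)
The plan is to proceed by induction on the number of adjacent transpositions needed to turn $\un{i}$ into $\un{j}$. Any two sequences in $\mathsf{Seq}_2(r,s)$ are related by a finite chain of swaps of adjacent letters $12\leftrightarrow 21$, and the identity \eqref{eq:GER} composes along such chains because the exponents telescope: $\ord(\un{i},\la)-\ord(\un{j},\la)=\bigl(\ord(\un{i},\la)-\ord(\un{k},\la)\bigr)+\bigl(\ord(\un{k},\la)-\ord(\un{j},\la)\bigr)$. So it suffices to treat the base case where $\un{i}$ and $\un{j}$ differ by a single adjacent swap.

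For that, write $\un{i}=\un{a}\cdot 1\cdot 2\cdot\un{b}$ and $\un{j}=\un{a}\cdot 2\cdot 1\cdot\un{b}$, so $X(\un{i})=X(\un{b})X_2^+X_1^+X(\un{a})$ and similarly for $X(\un{j})$. Applying the exchange relation \eqref{eq:flip-identity} to replace $X_2^+X_1^+$ by $X_1^+X_2^+\tilde{R}(H)$ and then commuting $\tilde{R}(H)$ past $X(\un{a})$ to the right via the rule $f(H)X(\un{a})=X(\un{a})f(H+\beta)$, where $\beta=\ell_1(\un{a})\al_1+\ell_2(\un{a})\al_2$, yields
\[
X(\un{i})=X(\un{j})\cdot\tilde{R}(H+\beta).
\]
Substituting $\mu=\la-\beta$ in the explicit product for $\tilde{R}$ gives $\tilde{R}(H+\beta)=\prod_{\mu\in F}(H-\mu)^{\ord(\mu+\beta+\rho)}$, so the proof reduces to the combinatorial identity
\[
\ord(\un{i},\mu)-\ord(\un{j},\mu)=\ord(\mu+\beta+\rho),\qquad \mu\in F.
\]

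Since $\un{i}$ and $\un{j}$ agree outside the two swap positions, the $\un{a}$- and $\un{b}$-portions of the corresponding face paths coincide and contribute identically to both $\ord(\un{i},\mu)$ and $\ord(\un{j},\mu)$. Letting $p\in\Z^2$ denote the lift of $\mu+\beta$, so that $v_1=p+\rho$ is the swap vertex, the local $\un{i}$-path traces the corner $p\to p+(1,0)\to p+(1,1)$ whereas the $\un{j}$-path traces $p\to p+(0,1)\to p+(1,1)$. Inspecting which vertices lie above a horizontal step or to the left of a vertical step of each path, one sees that the two sets of contributing vertices in the swap region differ by exactly $\{v_1\}$: vertex $v_1$ lies both above the horizontal step \emph{and} to the left of the vertical step of $\un{i}$, while for $\un{j}$ the corresponding steps are shifted so that $v_1$ is neither above nor to the left of them; all remaining vertices pair up bijectively between the two sets with identical orders, and their contributions cancel. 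This gives the difference $\ord(v_1)=\ord(\mu+\beta+\rho)$ as required.

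The main obstacle is the last combinatorial step, and its key subtlety is the \emph{union} convention in the definition of $\ord(\un{k},\mu)$: the indexing set is the union of the two vertex sets, so $v_1$ is counted once rather than twice in the $\un{i}$-sum. Missing this convention produces a spurious factor of $2\ord(v_1)$ instead of $\ord(v_1)$; with the correct convention in place, the bijective matching of the remaining vertices is routine.
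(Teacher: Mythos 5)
Your argument is correct and follows essentially the same route as the paper: reduce to a single adjacent transposition, which follows from the basic exchange relation \eqref{eq:flip-identity} together with commuting the resulting rational function $\tilde{R}(H)$ past $X(\un{a})$, and then build up to general $\un{i},\un{j}\in\mathsf{Seq}_2(r,s)$. Where you differ is in the bookkeeping: the paper iterates swaps to treat the case where one face path lies entirely below the other and then handles general pairs by ``splitting them into pieces,'' whereas you observe that the exponent $\ord(\un{i},\la)-\ord(\un{j},\la)$ telescopes additively along any chain of adjacent transpositions, which eliminates the need for a case split on whether the two paths cross — a modest but genuine streamlining. Your explicit verification of the single-swap combinatorial identity $\ord(\un{i},\mu)-\ord(\un{j},\mu)=\ord(\mu+\beta+\rho)$, including the observation that the swap vertex contributes only once to the $\un{i}$-order because $\ord(\un{k},\mu)$ is defined as a sum over the \emph{union} of the above-horizontal and left-of-vertical vertex sets, supplies a detail the paper leaves to the reader.
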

\begin{proof}
Put $X_i=X_i^+$ and $N=r+s$.
Multiplying both sides of the exchange relation \eqref{eq:flip-identity} by products of generators $X_i$ and
using that $HX_i = X_i(H+\al_i)$ we get
\begin{equation}\label{eq:GER-proof-1}
X_{i_N}\cdots X_{i_{k+2}}\textcolor{blue}{X_2 X_1} X_{i_{k-1}} \cdots X_{i_1} = 
X_{i_N}\cdots X_{i_{k+2}}\textcolor{red}{X_1X_2}X_{i_{k-1}} \cdots X_{i_1} \cdot \prod_{\la\in F} (H-\la)^{\ord(\la+\ga+\rho)}
\end{equation}
where $\ga=\al_{i_1}+\al_{i_2}+\cdots+\al_{i_{k-1}}$. Note that the vertex $v=\la+\ga+\rho$ is precisely the vertex between the two based face lattice paths, see Figure \ref{fig:GER-proof-step1}.
Iterating this, 
for any two sequences $\un{i},\un{j}\in\mathsf{Seq}_2(r,s)$ such that the based face lattice path $\bar\pi(\un{i},\la)$ is below $\bar\pi(\un{j},\la)$, we have
\begin{equation}\label{eq:GER-proof-2}
\textcolor{blue}{X(\un{i})}=\textcolor{red}{X(\un{j})} \cdot \prod_{\la\in F} (H-\la)^{\sum \ord(v)}
\end{equation}
where in the exponent we sum over all vertices $v$ that are between $\bar\pi(\un{i},\la)$ and $\bar\pi(\un{j},\la)$, see Figure \ref{fig:GER-proof-step2}. Note that $\sum \ord(v)= \ord(\un{i},\la)-\ord(\un{j},\la)$, which proves \eqref{eq:GER} in this case. The general case, in which the paths may cross at one or more points of the face lattice, can be obtained by splitting them into pieces.
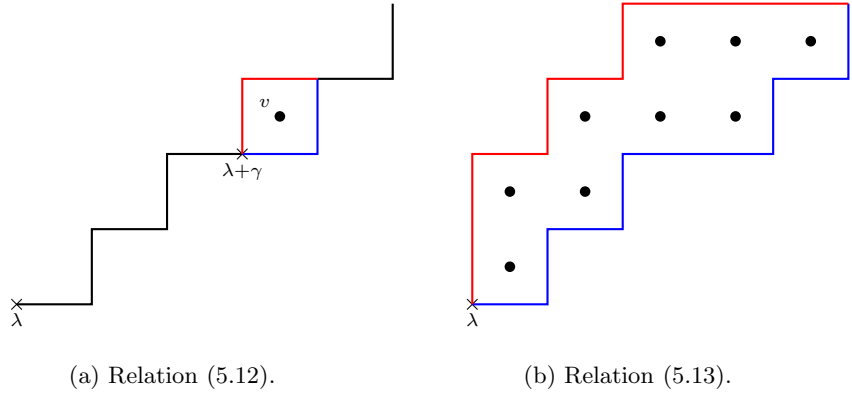
\begin{figure}
\centering
\begin{subfigure}[b]{.3\textwidth}
\[
\begin{tikzpicture}
\draw (0cm-2pt,0cm-2pt)--(0cm+2pt,0cm+2pt);
\draw (0cm+2pt,0cm-2pt)--(0cm-2pt,0cm+2pt);
\draw (0,0) node[below] {${\scriptstyle \lambda}$};

\draw (3cm-2pt,2cm-2pt)--(3cm+2pt,2cm+2pt);
\draw (3cm+2pt,2cm-2pt)--(3cm-2pt,2cm+2pt);
\draw (3,2) node[below] {${\scriptstyle \lambda+\gamma}$};

\draw[-,thick] (0,0)--(1,0)--(1,1)--(2,1)--(2,2)--(3,2);
\draw[-,thick] (4,3)--(5,3)--(5,4);
\draw[-,thick,blue] (3,2)--(4,2)--(4,3);
\draw[-,thick,red] (3,2)--(3,3)--(4,3);

\fill (3.5,2.5) circle (2pt) node[above left] {${\scriptstyle v}$};
\end{tikzpicture}
\]
\caption{Relation \eqref{eq:GER-proof-1}.}
\label{fig:GER-proof-step1}
\end{subfigure}
\qquad\qquad 
\begin{subfigure}[b]{.3\textwidth}
\[
\begin{tikzpicture}
\draw (0cm-2pt,0cm-2pt)--(0cm+2pt,0cm+2pt);
\draw (0cm+2pt,0cm-2pt)--(0cm-2pt,0cm+2pt);
\draw (0,0) node[below] {${\scriptstyle \lambda}$};

\draw[-,thick,blue] (0,0)--(1,0)--(1,1)--(2,1)--(2,2)--(4,2)--(4,3)--(5,3)--(5,4);
\draw[-,thick,red]  (0,0)--(0,2)--(1,2)--(1,3)--(2,3)--(2,4)--(5,4);

\fill (0.5,0.5) circle (2pt) node[above left] {};
\fill (0.5,1.5) circle (2pt) node[above left] {};
\fill (1.5,1.5) circle (2pt) node[above left] {};
\fill (1.5,2.5) circle (2pt) node[above left] {};
\fill (2.5,2.5) circle (2pt) node[above left] {}; 
\fill (2.5,3.5) circle (2pt) node[above left] {}; 
\fill (3.5,2.5) circle (2pt) node[above left] {};
\fill (3.5,3.5) circle (2pt) node[above left] {};
\fill (4.5,3.5) circle (2pt) node[above left] {};
\end{tikzpicture}
\]
\caption{Relation \eqref{eq:GER-proof-2}.}
\label{fig:GER-proof-step2}
\end{subfigure}
\caption{Special cases of the generalized exchange relation.}
\label{fig:GER-proof-step}
\end{figure}
\end{proof}

We note the following corollary to Lemma \ref{lem:GER}.
\begin{Corollary}\label{cor:quotient-basis}
Fix $\la\in F$ and $(a,b)\in\N^2$. Choose $\un{i}\in\mathsf{Seq}_2(a,b)$ such that $\ord(\un{i},\la)$ is minimal. Let $J_\la$ be the left ideal in $\CA$ generated by $H-\la$. Then $(\CA/J_\la)_{(a,b)}$ is $1$-dimensional with basis the image of $X(\un{i})$.
\end{Corollary}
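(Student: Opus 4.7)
The plan is to combine Theorem \ref{thm:PIDCGR} with the generalized exchange relation (Lemma \ref{lem:GER}) to identify, up to a common scalar, the coefficient polynomial expressing each monomial $X(\un{j})$ in terms of a fixed free $R$-basis of $\CA_{(a,b)}$. Since $R=\C[H]$ is a PID, Theorem \ref{thm:PIDCGR} says that $\CA_{(a,b)}$ is free of rank one as a left $R$-module, so I will fix a generator $y$ of it. Using the commutation $x\cdot(H-\la)=(H-\la')\cdot x$ for every $x\in\CA_{(a,b)}$, where $\la':=\la+a\al_1+b\al_2$, the quotient $(\CA/J_\la)_{(a,b)}=Ry/(H-\la')Ry\simeq R/(H-\la')R\simeq\C$ is one-dimensional, settling the dimension claim.

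For each $\un{j}\in\mathsf{Seq}_2(a,b)$ I will write $X(\un{j})=g_{\un{j}}(H)\,y$ for a unique polynomial $g_{\un{j}}\in R$. Multiplying the identity of Lemma \ref{lem:GER} by $\prod_{\mu}(H-\mu)^{\ord(\un{j},\mu)}$ on the right clears denominators and yields, in $\CA$,
\begin{equation*}
X(\un{i})\,P_{\un{j}}(H)=X(\un{j})\,P_{\un{i}}(H),\qquad P_{\un{k}}(H):=\prod_{\mu\in F}(H-\mu)^{\ord(\un{k},\mu)}.
\end{equation*}
Setting $\tilde P_{\un{k}}(H):=P_{\un{k}}(H-a\al_1-b\al_2)$, commuting the polynomials past $y$ via $y\cdot f(H)=f(H-a\al_1-b\al_2)\cdot y$, and substituting $X(\un{k})=g_{\un{k}}(H)\,y$, this becomes the scalar identity $g_{\un{i}}\,\tilde P_{\un{j}}=g_{\un{j}}\,\tilde P_{\un{i}}$ in $R$. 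Consequently the ratio $h:=g_{\un{j}}/\tilde P_{\un{j}}\in\C(H)$ is independent of $\un{j}$.

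The main obstacle is to pin down $h$ precisely. For this I will use that the monomials $\{X(\un{j})\}_{\un{j}\in\mathsf{Seq}_2(a,b)}$ span $\CA_{(a,b)}$ as a left $R$-module (any product of generators of bidegree $(a,b)$ reduces, via the relations \eqref{eq:Aalbepq-rels} together with $[X_1^\pm,X_2^\mp]=0$, to an $R$-linear combination of such monomials). This forces $\sum_{\un{j}} R\,g_{\un{j}}=R$, i.e., $\gcd(\{g_{\un{j}}\})=1$ in $R$. Combined with $g_{\un{j}}=h\,\tilde P_{\un{j}}$ and $\gcd(\{\tilde P_{\un{j}}\})=\prod_{\mu}(H-\mu-a\al_1-b\al_2)^{o_{\min}(\mu)}$, where $o_{\min}(\mu):=\min_{\un{j}}\ord(\un{j},\mu)$, this pins down $h$ up to a scalar $c\in\C^\times$ as $h=c\cdot\prod_{\mu\in F}(H-\mu-a\al_1-b\al_2)^{-o_{\min}(\mu)}$, and hence
\begin{equation*}
g_{\un{j}}(H)=c\cdot\prod_{\mu\in F}(H-\mu-a\al_1-b\al_2)^{\ord(\un{j},\mu)-o_{\min}(\mu)}.
\end{equation*}
Evaluating at $H=\la'$, the image of $X(\un{j})$ in $(\CA/J_\la)_{(a,b)}$ is nonzero exactly when $\ord(\un{j},\la)=o_{\min}(\la)$; choosing $\un{i}$ realizing this minimum makes the image of $X(\un{i})$ a basis of the one-dimensional quotient.
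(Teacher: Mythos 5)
Your proof is correct, and it takes a genuinely different route from the paper's. For one-dimensionality, you invoke Theorem \ref{thm:PIDCGR} to get that $\CA_{(a,b)}$ is free of rank one over $R=\C[H]$ with a generator $y$, and then compute $(\CA/J_\la)_{(a,b)}=Ry/(H-\la')Ry\simeq\C$ directly; the paper instead re-runs an ad-hoc contradiction argument using the nondegeneracy of the gradation form (essentially re-deriving the rank-one fact in this specific degree) to show $(J_\la)_{(a,b)}\neq\CA_{(a,b)}$, and combines that with a spanning argument to get dimension one. For identifying the surviving monomial, you write $X(\un{j})=g_{\un{j}}(H)\,y$, derive the relation $g_{\un{i}}\tilde P_{\un{j}}=g_{\un{j}}\tilde P_{\un{i}}$ from Lemma \ref{lem:GER}, and pin the common ratio $h=g_{\un{j}}/\tilde P_{\un{j}}$ down by a GCD argument in the PID $R$ (using that the monomials span $\CA_{(a,b)}$, hence $\gcd(g_{\un{j}})=1$); the paper instead observes directly from Lemma \ref{lem:GER} that $X(\un{j})\in\C X(\un{i})+J_\la$ for the minimizer $\un{i}$ and concludes spanning. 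Your route is longer but buys an explicit formula $g_{\un{j}}(H)=c\prod_{\mu}(H-\mu')^{\ord(\un{j},\mu)-o_{\min}(\mu)}$ for the coordinate of each monomial relative to the CGR basis element $y$, which is more information than the corollary asks for; the paper's argument is shorter and avoids leaning on Theorem \ref{thm:PIDCGR}. One small cosmetic point: the relations $[X_1^\pm,X_2^\mp]=0$ that you cite separately are already among \eqref{eq:Aalbepq-rels}, so the parenthetical ``together with'' is redundant.
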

\begin{proof}
Let $\bar X(\un{i})$ be the image of $X(\un{i})$ in the quotient $\CA/J_\la$.
First we show that $\bar X(\un{i})$ spans.
For any other $\un{j}\in\mathsf{Seq}_2(a,b)$ we have $\ord(\un{j},\la)-\ord(\un{i},\la)\ge 0$.
So $X(\un{j})\in \C X(\un{i}) + J_\la$ in $\CA$ by Lemma \ref{lem:GER}. Since the monomials $X(\un{j})$ generate $\CA_{(a,b)}$ as a right $\C[H]$-module, their images span $(\CA/J_\la)_{(a,b)}$, which proves the claim.

Next we show that $\bar X(\un{i})$ is nonzero. Put $g=(a,b)$. Assume that $(J_\la)_g=\CA_g$. Fix $b\in A_{-g}\setminus\{0\}$. Define $\varphi:\CA_g\to R$ by $\varphi(a)=ba$. Then $\varphi(ar)=\varphi(a)r$ for all $r\in R$, hence the image of $\varphi$ is an ideal of $R$. This ideal is nonzero since the gradation form is non-degenerate. Since $R$ is a PID, there is a single generator $f\in R$ for this ideal. Pick $F\in \CA_g$ with $\varphi(F)=f$. By the assumption, $F=F_1\cdot (H-\la)$ for some $F_1\in \CA_g$. Hence $f=\varphi(F)=\varphi(F_1\cdot(H-\la))=\varphi(F_1)\cdot(H-\la_1)=fg\cdot(H-\la)$ for some $g\in R$, since $\varphi(F_1)\in (f)$. Dividing by $f$ we obtain $1=g\cdot(H-\la)$ which is a contradiction.
\end{proof}

\begin{Lemma}\label{lem:flip-lemma}
Put $X_i=X_i^+$.
Let $\un{i}\in \mathsf{Seq}_2(m,n)$ and let $\la\in F$.
Then there exists $a_0\in\N$ such that for all integers $a\ge a_0$ there is a rational function $f_a(u)\in\C(u)$, which is regular and nonzero at $u=\la$, such that
\begin{equation}
X_1^{am}X_2^{an} X(\un{i}) = X_1^{(a+1)m}X_2^{(a+1)n} (H-\la)^{\ord(\un{i},\la)} f_a(H).
\end{equation}
\end{Lemma}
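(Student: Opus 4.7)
The plan is to apply the generalized exchange relation (Lemma \ref{lem:GER}) to suitable sequences. Put $\un{k}_a = 2^{an}1^{am}\in\mathsf{Seq}_2(am,an)$, so that $X(\un{k}_a) = X_1^{am}X_2^{an}$, and hence $X_1^{am}X_2^{an}\,X(\un{i}) = X(\un{i}\un{k}_a)$ (with concatenation of sequences read left to right). Since $\un{i}\un{k}_a$ and $\un{k}_{a+1}$ both lie in $\mathsf{Seq}_2((a+1)m,(a+1)n)$, Lemma \ref{lem:GER} gives
\[
X(\un{i}\un{k}_a) = X(\un{k}_{a+1})\prod_{\mu\in F}(H-\mu)^{\ord(\un{i}\un{k}_a,\mu)-\ord(\un{k}_{a+1},\mu)}.
\]
Define $f_a(H)$ to be the product of those factors with $\mu\neq\la$. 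Since $(H-\mu)^e|_{H=\la}=(\la-\mu)^e\neq 0$ for any such $\mu$ and any integer exponent $e$, $f_a$ is automatically regular and nonzero at $H=\la$, and the statement reduces to showing that the exponent at $\mu=\la$ equals $\ord(\un{i},\la)$ for all $a\ge a_0$.

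Additivity of $\ord$ under concatenation yields $\ord(\un{i}\un{k}_a,\la)=\ord(\un{i},\la)+\ord(\un{k}_a,\la+(m,n))$ (the path $\pi(\un{i},\la)$ terminates at $\la+(m,n)$), so the task reduces to showing $\ord(\un{k}_a,\la+(m,n))=\ord(\un{k}_{a+1},\la)$ for $a\ge a_0$. Expanding each order as a sum of $\ord(v)$ over vertices adjacent to the edges of the respective L-shaped path, and applying $(m,n)$-periodicity of $\ord(\cdot)$ to shift the $\ord(\un{k}_a,\la+(m,n))$-contributions by $-(m,n)$, the difference $\ord(\un{k}_{a+1},\la)-\ord(\un{k}_a,\la+(m,n))$ collapses to two finite sums of $\ord(v)$ over vertices of the form $v=(\la_x-\tfrac12,\,\la_y+k+\tfrac12)$ for $k\in\{an,\ldots,(a+1)n-1\}$, and $v=(\la_x+j+\tfrac12,\,\la_y+an+\tfrac12)$ for $j\in\{-m,\ldots,-1\}$.

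Every such vertex $v$ satisfies $v_y\ge \la_y+an+\tfrac12$ while $v_x$ is confined to a bounded range, so its transverse cylinder coordinate $s(v)=(-nv_x+mv_y)/\sqrt{m^2+n^2}$ grows at least like $amn/\sqrt{m^2+n^2}$ as $a\to\infty$. By the finiteness condition (ii) in the definition of an $(m,n)$-periodic higher spin vertex configuration, $\overline{\mathscr{L}}$ has compact support on the cylinder $\T_{m,n}$, so for $a\ge a_0$ sufficiently large all these vertices lie outside the support of $\overline{\mathscr{L}}$; hence $\ord(v)=0$ for each, and both discrepancy sums vanish. This establishes the required identity and gives the desired $a_0$. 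The main technical obstacle is the combinatorial bookkeeping needed to reduce the exponent difference to the two finite sums above; once this is done, the vanishing for large $a$ is an immediate consequence of the finite support of $\overline{\mathscr{L}}$ on the cylinder.
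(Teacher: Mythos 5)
Your argument is correct and follows essentially the same route as the paper, which gives only a one-sentence proof (``choose $a_0\in\N$ large enough to ensure that there are no corners above the path $\pi(\un{i},\la+a_0\al_2)$, then use the generalized exchange relation'') together with Figure~\ref{fig:flip-lemma}. You usefully spell out what the paper leaves implicit: writing $X_1^{am}X_2^{an}X(\un{i})=X(\un{i}\un{k}_a)$, comparing it to $X(\un{k}_{a+1})$ via Lemma~\ref{lem:GER}, and reducing the required identity $\ord(\un{i}\un{k}_a,\la)-\ord(\un{k}_{a+1},\la)=\ord(\un{i},\la)$, by additivity of $\ord$ under concatenation of face paths and the $(m,n)$-periodicity of $\ord(\cdot)$, to two explicit discrepancy sums indexed by at most $m+n$ vertices whose transverse coordinates on $\T_{m,n}$ grow linearly in $a$. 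The finiteness condition (ii) then forces those orders to vanish for $a\ge a_0$, which is exactly the content of the paper's choice of $a_0$. One minor point: your estimate that the transverse coordinate ``grows at least like $amn/\sqrt{m^2+n^2}$'' degenerates when $mn=0$, i.e.\ $(m,n)\in\{(1,0),(0,1)\}$; in that case, however, the ice rule together with $(m,n)$-periodicity and finiteness forces $\mathscr{L}_1\equiv 0$ (resp.\ $\mathscr{L}_2\equiv 0$), hence $\ord\equiv 0$ on all vertices, so the discrepancy sums vanish outright and $a_0=0$ works. It would be worth adding this one line to cover the degenerate case.
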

\begin{proof}
Choose $a_0\in\N$ large enough to ensure that there are no corners above the path $\pi(\un{i},\la+a_0\al_2)$.
Then use the generalized exchange relation \eqref{eq:GER}.
See Figure \ref{fig:flip-lemma}.
\end{proof}

\begin{figure}
\centering
\[
\begin{tikzpicture}[xscale=.5,yscale=.5]
\draw[dashed] (-.5,-.7)--(-.5,12.7);
\draw[dashed] (4.5,-.7)--(4.5,12.7);
\draw[dashed] (9.5,-.7)--(9.5,12.7);
\draw[dashed] (14.5,-.7)--(14.5,12.7);
\draw (0cm-4pt,0cm-4pt)--(0cm+4pt,0cm+4pt);
\draw (0cm+4pt,0cm-4pt)--(0cm-4pt,0cm+4pt);
\draw (5cm-4pt,4cm-4pt)--(5cm+4pt,4cm+4pt);
\draw (5cm+4pt,4cm-4pt)--(5cm-4pt,4cm+4pt);
\draw (15cm-4pt,12cm-1.5pt-4pt)--(15cm+4pt,12cm-1.5pt+4pt);
\draw (15cm+4pt,12cm-1.5pt-4pt)--(15cm-4pt,12cm-1.5pt+4pt);
\draw[thick,Blue] (0,0)--(2,0)--(2,2)--(3,2)--(3,4)--(5,4);
\draw[thick,OliveGreen] (5,4)--(5,12cm-3pt)--(15,12cm-3pt);
\draw[thick,Red] (0,0)--(0,12)--(15,12);
\draw (3.5,1.5) circle (4pt) node[above left] {};
\draw (1.5,2.5) circle (4pt) node[above left] {};
\draw (0.5,3.5) circle (4pt) node[above left] {};
\draw (2.5,6.5) circle (4pt) node[above left] {};
\draw (.5,8.5) circle (4pt) node[above left] {};
\draw (1.5,10.5) circle (4pt) node[above left] {};
\end{tikzpicture}
\]
\caption{$\textcolor{OliveGreen}{X_1^{am}X_2^{an}} \textcolor{blue}{X(\un{i})} = \textcolor{red}{X_1^{(a+1)m}X_2^{(a+1)n} } (H-\la)^{\ord(\un{i},\la)} f_a(H)$.}
\label{fig:flip-lemma}
\end{figure}

\begin{Lemma} \label{lem:nil-thm}
Put $X_i=X_i^+$.
Let $\un{i}\in \mathsf{Seq}_2(m,n)$ and $\la\in F$. Then there exist $a_0\in\N$ and $b_0\in\Z$ such that
for all $a\ge a_0$ there is a rational function $g_a(u)\in \C(u)$, which is regular and nonzero at $u=\la$, such that
\begin{equation}
X(\un{i})^a = X_1^{am}X_2^{an} (H-\la)^{b_0+a\ord(\un{i},\la)} g_a(H).
\end{equation}
\end{Lemma}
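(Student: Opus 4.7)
The plan is to apply the generalized exchange relation (Lemma~\ref{lem:GER}) directly. Writing $X(\un{i})^a = X(\un{i}^a)$, where $\un{i}^a \in \mathsf{Seq}_2(am, an)$ is the $a$-fold concatenation of $\un{i}$, and choosing as reference sequence $\un{j}_a := 2^{an} 1^{am} \in \mathsf{Seq}_2(am, an)$ so that $X(\un{j}_a) = X_1^{am} X_2^{an}$, Lemma~\ref{lem:GER} gives immediately
\begin{equation*}
X(\un{i})^a = X_1^{am} X_2^{an} \prod_{\mu \in F}(H - \mu)^{\ord(\un{i}^a, \mu) - \ord(\un{j}_a, \mu)}.
\end{equation*}
The task thus reduces to controlling the exponent of $(H - \la)$ as a function of $a$.

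First I will show that $\ord(\un{i}^a, \la) = a\,\ord(\un{i}, \la)$. In $\R^2$, the face path $\pi(\un{i}^a, \la)$ decomposes as the concatenation of $a$ successive translates $\pi(\un{i}, \la) + k(m, n)$ for $k = 0, 1, \ldots, a - 1$. Because $\mathscr{L}$ is $(m, n)$-periodic, each translate has the same multiset of vertices directly above its horizontal edges and directly to the left of its vertical edges, and each such vertex contributes the same order; additivity in the definition of $\ord$ then yields the scaling.

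The main obstacle is the complementary claim that $\ord(\un{j}_a, \la)$ is eventually constant in $a$. Writing $\la = x_1 \al_1 + x_2 \al_2$ so that $\pi(\un{j}_a, \la) \subset \R^2$ starts at $(x_1, x_2)$, goes vertically to $(x_1, x_2 + an)$, then horizontally to $(x_1 + am, x_2 + an)$, the order receives contributions from vertices on the column $x = x_1 - 1/2$ (left of the vertical leg) and on the row $y = x_2 + an + 1/2$ (above the horizontal leg). The column contribution stabilizes because the corners of $\mathscr{L}$ on any fixed vertical line form a finite set: each $(m, n)$-orbit meets such a line in at most one point since $m \neq 0$, and only finitely many orbits carry nonzero $\mathscr{L}$-label, so for $a$ large the column captures all corners above $x_2$. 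For the row, although its height grows with $a$, the orbits with a representative at height $x_2 + an + 1/2$ are exactly those whose representatives have $y$-coordinate congruent to $x_2 + 1/2$ modulo $n$, and for each such orbit the representative at that height has $x$-coordinate of the form $am + x_0'$ for an $a$-independent constant $x_0'$; the range condition $x_1 + 1/2 \le am + x_0' \le x_1 + am - 1/2$ reduces for $a$ large to the single $a$-independent inequality $x_0' \le x_1 - 1/2$. Hence for $a \ge a_0$ both contributions are constant, and we may define $b_0 := -\ord(\un{j}_a, \la)$ independent of $a$.

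Combining these two facts, the exponent of $(H - \la)$ equals $b_0 + a\,\ord(\un{i}, \la)$ for $a \ge a_0$, and the remaining factor
\begin{equation*}
g_a(H) := \prod_{\substack{\mu \in F \\ \mu \neq \la}}(H - \mu)^{\ord(\un{i}^a, \mu) - \ord(\un{j}_a, \mu)}
\end{equation*}
is regular and nonzero at $H = \la$ because each $(H - \mu)$ with $\mu \neq \la$ specializes to the nonzero scalar $\la - \mu$, so no such factor contributes a zero or pole at $\la$. This yields the desired identity.
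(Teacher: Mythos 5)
Your overall strategy—apply the generalized exchange relation (Lemma~\ref{lem:GER}) once, comparing $X(\un{i}^a)$ with $X(\un{j}_a) = X_1^{am}X_2^{an}$, and then control the exponent of $(H-\lambda)$ directly—is a valid and more direct route than the paper's, which instead packages the geometric content into a one-step flip (Lemma~\ref{lem:flip-lemma}) and inducts on $a$. The two arguments rest on the same ingredients (GER plus an eventual-stability observation), so this is a reasonable reorganization rather than a fundamentally new proof.

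There is, however, a gap in your argument that $\ord(\un{j}_a,\lambda)$ is eventually constant. You restrict attention to corners on the single column $x = x_1 - \tfrac12$ and the single row $y = x_2+an+\tfrac12$, as though only the immediately adjacent vertices contributed. That misreads the definition: the order sums over \emph{all} corners lying directly above some horizontal step, or directly to the left of some vertical step, at any distance. (The proof of Lemma~\ref{lem:order} is explicit about this: for the face step crossing below both corners, ``the order is $-1+1=0$ since both corners are counted,'' including a corner that is not immediately adjacent.) Restricted to the nearest column and row, your count would in general be wrong. The clean way to close the gap is to route through Lemma~\ref{lem:order}, which replaces the order by an edge-crossing count: $\ord(\un{j}_a,\lambda)=\max(v_a,h_a)$. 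Here $h_a$ counts horizontal $\mathscr{L}$-edges on the fixed line $x=x_1$ crossed by the vertical leg; since $m\neq 0$ each $(m,n)$-orbit meets this line at most once, only finitely many such edges carry nonzero label, so $h_a$ stabilizes. For $v_a$, translate the horizontal leg back by $-a(m,n)$: its crossings then lie on the fixed half-line $y=x_2$, $x<x_1$, again finite for $n\neq 0$, so $v_a$ stabilizes as well. The same Lemma~\ref{lem:order} also gives the cleanest justification of the scaling $\ord(\un{i}^a,\lambda)=a\,\ord(\un{i},\lambda)$ (crossing counts of the $a$ periodic translates simply add), sidestepping the ``additivity'' of $\ord$ you invoke, which is not obviously exact at the junction vertices shared by consecutive copies.
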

\begin{proof}
Choose $a_0$ to be the integer from Lemma \ref{lem:flip-lemma}. We use induction on $a$ to prove the claim.
For $a=a_0$, the existence of $b_0$ and $g_a$ follow from the generalized exchange relation \eqref{eq:GER}.
Suppose the claim is true for some integer $a\ge a_0$.
Then since $X(\un{i})$ commutes with $H$,
\[X(\un{i})^{a+1} =
X_1^{am}X_2^{an} X(\un{i}) (H-\la)^{b_0+a\ord(\un{i},\la)} g_a(H).\]
By Lemma \ref{lem:flip-lemma} the right hand side equals
\[
X_1^{(a+1)m}X_2^{(a+1)n} (H-\la)^{b_0+(a+1)\ord(\un{i},\la)} f_a(H) g_a(H).
\]
Taking $g_{a+1}(u)=f_a(u)g_a(u)$, this finishes the proof of the induction step.
\end{proof}

We are now ready to prove that the ideal $\CJ_\la$ is nil.

\begin{Theorem}\label{thm:radicals}
For any $\la\in F$, the gradation radical $\CJ_\la$ of $\CC_\la$ is equal to the nilradical of $\CC_\la$. In particular, $\CJ_\la$ is nil.
\end{Theorem}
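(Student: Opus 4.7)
My plan is to split the argument according to the two cases of Proposition \ref{prp:B_la-J_la-Description}. If $\bar\la$ lies in an incontractible component of $\T_{m,n}\setminus\overline{\mathscr{L}}$, then $\CC_\la\simeq\C[L,L^{-1}]$ is already a domain, so both $\CJ_\la$ and the nilradical are zero and the theorem is trivial. I will therefore focus on the case where $\bar\la$ lies in a contractible component $D$.

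In this contractible case, one direction is essentially free: Proposition \ref{prp:B_la-J_la-Description}(a) gives $\CC_\la/\CJ_\la\simeq\C$, so $\CJ_\la$ is a maximal (hence prime) ideal of the commutative ring $\CC_\la$ and must contain the nilradical. For the substantive direction $\CJ_\la\subseteq \mathrm{Nil}(\CC_\la)$, I will invoke Lemma \ref{lem:nilpotency-reduction} to reduce to showing that $X(\un{i})$ is nilpotent in $\CC_\la$ for every $\un{i}\in\mathsf{Seq}_2(m,n)$. Applying Lemma \ref{lem:nil-thm}, for all sufficiently large $a$ there is a rational function $g_a=p_a/q_a\in \C(u)$ with $q_a(\la)\neq 0$ such that
\[X(\un{i})^a = X_1^{am}X_2^{an}(H-\la)^{b_0+a\ord(\un{i},\la)}g_a(H)\]
in $\CA_{\mathrm{loc}}$. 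After clearing $q_a(H)$ and reducing modulo $(H-\la)$, the image of $q_a(H)$ in $\CC_\la$ becomes the unit $q_a(\la)\in\C^\times$, so once $b_0+a\ord(\un{i},\la)\ge 1$ the image of $X(\un{i})^a$ in $\CC_\la$ must vanish.

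The decisive input is therefore the positivity $\ord(\un{i},\la)>0$. Since $\bar\la\in D$ and the closed loop $\bar\pi(\un{i},\la)$ represents the nontrivial generator of $\pi_1(\T_{m,n})$ (it is not null-homotopic), the path necessarily exits the contractible region $D$ and crosses at least one edge of $\overline{\mathscr{L}}$, so Corollary \ref{cor:order-positivity} applies and yields $\ord(\un{i},\la)>0$. Hence $X(\un{i})^a=0$ in $\CC_\la$ for all sufficiently large $a$, completing the argument. The main obstacle has been front-loaded into Lemmas \ref{lem:nilpotency-reduction} and \ref{lem:nil-thm} together with Corollary \ref{cor:order-positivity}; once those are in hand, the remaining steps are just the short topological observation above and the easy direction coming from maximality of $\CJ_\la$.
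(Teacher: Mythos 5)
Your proof is correct and follows essentially the same route as the paper's: split on whether $\bar\la$ lies in a contractible or incontractible component, reduce via Lemma \ref{lem:nilpotency-reduction} to nilpotency of $X(\un{i})$ for $\un{i}\in\mathsf{Seq}_2(m,n)$, and then combine Corollary \ref{cor:order-positivity} with Lemma \ref{lem:nil-thm}. The only cosmetic difference is in the easy inclusion $\mathrm{Nil}(\CC_\la)\subseteq\CJ_\la$: the paper observes directly that the degree-zero component of a nilpotent element must vanish (since $\CJ_\la$ is an ideal and $(\CC_\la)_0\simeq\C$), whereas you deduce it from $\CJ_\la$ being a maximal, hence prime, ideal — both are valid and equally short.
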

\begin{proof}
If $\bar\la$ belongs to an incontractible connected component of $\T_{m,n}\setminus\overline{\mathscr{L}}$, then $\CC_\la$ is a domain and $\CJ_\la =\{0\}$ by Proposition \ref{prp:B_la-J_la-Description}.

Suppose $\bar\la$ belongs to a contractible connected component. The homogeneous term of degree $(0,0)$ of any nilpotent element $a$ of $\CC_\la$ must be zero, hence $a\in \CJ_\la$. For the converse, by Lemma \ref{lem:nilpotency-reduction} it is enough to prove that $X(\un{i})$ is nilpotent in $\CC_\la$ for all $\un{i}\in\mathsf{Seq}_2(m,n)$.
By Corollary \ref{cor:order-positivity}, $\ord(\un{i},\la)>0$, hence by Lemma \ref{lem:nil-thm}, $X(\un{i})^a\in \CC\cdot (H-\la)$ for $a\gg 0$.
\end{proof}

\begin{Corollary}
$\CJ_\la M_\la=0$ for any simple integral weight module $M$ and any $\la\in F$.
\end{Corollary}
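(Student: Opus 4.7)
The plan is to combine the fact, just established in Theorem~\ref{thm:radicals}, that $\CJ_\la$ is a nil ideal of the commutative algebra $\CC_\la$, with the observation that each nonzero weight space of a simple weight $\CA$-module is a \emph{simple} module over $\CC_\la$. Once both are in hand, the conclusion becomes a one-line argument: a nil ideal in a commutative ring annihilates every simple module.

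First I would verify the simplicity statement. If $M_\la=0$ there is nothing to prove, so assume $M_\la\neq 0$ and fix a nonzero $v\in M_\la$. Since $M$ is simple, $\CA v=M$. For $g=(g_1,g_2)\in\Z^2$ we have $\CA_g v\subseteq M_{\la+g_1\al_1+g_2\al_2}$, and the image lies in $M_\la$ precisely when $g_1\al_1+g_2\al_2=0$, that is, when $g\in G_0$, i.e.\ when $\CA_g\subseteq\CC$. Intersecting $\CA v=M$ with $M_\la$ therefore yields
\[
M_\la = \CC v.
\]
Applying the same reasoning to any other nonzero $w\in M_\la$ shows $M_\la=\CC w$, so $M_\la$ has no proper nonzero $\CC$-submodules. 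Since $(H-\la)M_\la=0$, the $\CC$-action factors through $\CC_\la=\CC/(H-\la)$, and we conclude that $M_\la$ is a simple $\CC_\la$-module.

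Now fix any $a\in\CJ_\la$. By Theorem~\ref{thm:radicals}, $a^N=0$ for some $N\ge 1$. Since $\CC_\la$ is commutative, $aM_\la$ is a $\CC_\la$-submodule of $M_\la$, so equals $0$ or $M_\la$. If $aM_\la=M_\la$, then inductively $a^kM_\la=M_\la$ for every $k\ge 1$, contradicting $a^N=0$ together with $M_\la\neq 0$. Hence $aM_\la=0$ for every $a\in\CJ_\la$, proving $\CJ_\la M_\la=0$. No step requires a new idea beyond what Theorem~\ref{thm:radicals} supplies; the only mildly delicate point is confirming that $\CC_\la$ is commutative and acts on $M_\la$, both of which follow directly from the commutativity of $\CC$ noted earlier and from the definition of a weight space.
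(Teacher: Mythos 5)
Your proof is correct and follows essentially the same route as the paper: invoke Theorem~\ref{thm:radicals} to get that $\CJ_\la$ is nil, then use that $M_\la$ is a simple $\CC_\la$-module to conclude the annihilation. The only difference is a point of economy: the paper states that $M_\la$ is a \emph{finite-dimensional} simple $\CC_\la$-module and deduces the annihilation from that, whereas you bypass finite-dimensionality entirely by observing that for commutative $\CC_\la$ the subspace $aM_\la$ is a $\CC_\la$-submodule, so must be $0$ or $M_\la$, and the latter contradicts $a^N=0$. Your version is marginally more self-contained, since you also spell out why $M_\la=\CC v$ for any nonzero $v\in M_\la$ (the standard weight-space restriction argument), a step the paper takes as understood; both proofs ultimately rest on the same two facts.
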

\begin{proof}
Since $M_\la$ is a simple finite-dimensional module over $\CC_\la$, and every element of $\CJ_\la$ is nilpotent, it follows that $\CJ_\la M_\la=0$.
\end{proof}

\begin{Corollary}\label{cor:no-inner-breaks}
Let $M$ be a simple weight $\CA$-module. Then $M$ has no inner breaks.
\end{Corollary}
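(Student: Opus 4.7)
The plan is to invoke Theorem~\ref{thm:no-inner-breaks} and thereby reduce the statement to establishing $\CJ_\Fm M_\Fm = 0$ for every $\Fm\in\Specm(\C[H])$. Because the generators $X_i^\pm$ shift weights by $\pm\al_i\in\Z$, the support of a simple weight module lies in a single coset $\om\in\C/\Z$; only maximal ideals $\Fm=(H-\la)$ with $\la\in\om$ need be considered, the rest giving $M_\Fm=0$ for trivial reasons.

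If $\om=0+\Z$, then $M$ is integral and the corollary preceding the proof supplies $\CJ_\la M_\la=0$ directly. The substance of the argument lies in the non-integer case $\om\neq 0+\Z$, where I would show $\CJ_\la=0$ outright. Unfolding the chain of equivalences in the proof of Lemma~\ref{lem:edge-crossing}, for any $\un{i}\in\mathsf{Seq}_2$ the degree-zero element $X(\un{i})^\ast X(\un{i})\in\C[H]$ belongs to $(H-\la)$ iff there exist $r$ and $c\in\Z$ with $P^\mathscr{L}_{i_r}(\la+c+\al_{i_r}/2)=0$. Every zero of $P^\mathscr{L}_{i_r}$ lies in $E_{i_r}=\Z+\al_{i_r}/2$, whereas $\la+c+\al_{i_r}/2\in\la+\Z+\al_{i_r}/2$ is disjoint from $E_{i_r}$ precisely because $\la\notin\Z$. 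Thus the image of $X(\un{i})^\ast X(\un{i})$ in $\CC_\la$ is a nonzero scalar, so selecting $\un{i}\in\mathsf{Seq}_2(|k|m,|k|n)$ for each $k\in\Z\setminus\{0\}$ (and applying the involution when $k<0$) produces an invertible element in every homogeneous component $(\CC_\la)_g$, $g\in G_0$. Then $\CC_\la$ is a crossed product algebra over $\C$, and the decomposition \eqref{eq:Cm-Decomposition} forces $\CJ_\la=0$.

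Combining the two cases gives $\CJ_\la M_\la=0$ for every $\la\in\C$, and Theorem~\ref{thm:no-inner-breaks} concludes the proof. No serious obstacle is expected: the integer case is handled by the preceding corollary (hence by Theorem~\ref{thm:radicals}), and the non-integer case rests on the elementary observation that the edge midpoint lattice $E_i\subseteq\tfrac{1}{2}\Z$ cannot be reached from a non-integer $\la$ via integer translates combined with the fixed offset $\al_i/2$.
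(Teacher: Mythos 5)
Your argument is correct and is essentially the intended proof, which the paper leaves implicit: after Theorem~\ref{thm:no-inner-breaks} reduces the claim to $\CJ_\Fm M_\Fm=0$ for all $\Fm$, the integral coset is covered by the unnumbered corollary immediately preceding (i.e.\ by Theorem~\ref{thm:radicals}), and the non-integral cosets are trivial because $\CJ_\Fm=0$ there. Your verification of the non-integral case is the right one: all roots of $P_i^\mathscr{L}$ lie in $E_i=\Z+\al_i/2$, so the elements $X(\un{i})^\ast X(\un{i})\in\C[H]$ are nonzero modulo $(H-\la)$ whenever $\la\notin\Z$, forcing $H_\Fm=G_\Fm$ and hence $\CJ_\Fm=0$ by the decomposition \eqref{eq:Cm-Decomposition}. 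One small streamlining you could make: it suffices to observe that already $X_i^+X_i^-=P_i^\mathscr{L}(H-\al_i/2)$ and $X_i^-X_i^+=P_i^\mathscr{L}(H+\al_i/2)$ avoid $(H-\la)$ for $\la\notin\Z$, so every monomial $X(\un{i})$ of degree in $G_0$ is automatically a unit in $\CC_\la$; you do not need the full unwinding of Lemma~\ref{lem:edge-crossing}. No errors in what you wrote.
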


In view of Corollary \ref{cor:modules}, this completes the proof of Theorem \ref{thm:B}(i)-(ii). It is an easy exercise to prove that any finite-dimensional simple $\CA$-module is a weight module, proving part (iv). Part (iii) follows from  Proposition \ref{prp:centralizing-element}(c) to be proved in the next section.

\begin{Example}
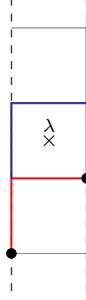
\begin{figure}
\centering
\begin{tikzpicture}[xscale=1,yscale=1]
\draw[help lines] (0,0)--(1,0);
\draw[help lines] (0,1)--(1,1);
\draw[help lines] (0,2)--(1,2);
\draw[help lines] (0,3)--(1,3);
\draw[dashed] (0,-.5)--(0,3.5);
\draw[dashed] (1,-.5)--(1,3.5);
\draw[-,thick,Red] (0,0)--(0,1)--(1,1)--(1,2);
\draw[-,thick,Blue] (0,1)--(0,2)--(1,2)--(1,3);
\fill (0,0) circle (2pt);
\fill (1,1) circle (2pt);
\draw (.5cm-2pt,1.5cm-2pt)--(.5cm+2pt,1.5cm+2pt);
\draw (.5cm+2pt,1.5cm-2pt)--(.5cm-2pt,1.5cm+2pt);
\draw (.5,1.5) node[font=\scriptsize, above] {$\la$};
\end{tikzpicture}
\caption{A $(1,1)$-periodic higher spin vertex configuration.} \label{fig:11-config}
\end{figure}
Let $(m,n)=(1,1)$ and let $\mathscr{L}$ be as in Figure \ref{fig:11-config}, where $\la=0$.
With $(\al_1,\al_2)=(-1,1)$ we have $P_1^\mathscr{L}(u)=P_2^\mathscr{L}(u)=(u-1/2)(u+1/2)$.
Let $\la=0$. Then $\bar\la$ belongs to a simply connected component.
Using the exchange relation \eqref{eq:flip-identity} one can check directly that $X_2X_1X_2\in \CA(H-\la)$ and $X_1X_2X_1\in \CA(H-\la)$. This implies that $X_1X_2$ and $X_2X_1$ are nilpotent in $\CB_\la$. Since $(12)$ and $(21)$ are the only elements of $\mathsf{Seq}_2(1,1)$, it follows by Lemma \ref{lem:nilpotency-reduction} that every element of $J_\la$ is nilpotent. Thus $J_\la$ has to act as zero on any simple weight module with $\la$ in its support.
\end{Example}

\begin{Example}
Let $(m,n)=(1,1)$, $(\al_1,\al_2)=(-1,1)$. Put $\la=0$ and consider the higher spin vertex configuration $\mathscr{L}$ in Figure \ref{fig:order7}. 
Let $p_i(u)=P_i^\mathscr{L}(u)$. In this case $p_1(u)$ and $p_2(u)$ coincide (which can only happen when $(m,n)=(1,1)$) and are given by
\[
p_1(u)=p_2(u)= \textcolor{Blue}{(u-1/2)} \textcolor{OliveGreen}{(u+1/2)^2(u-3/2)^2} \textcolor{Purple}{(u+3/2)^3(u-5/2)^3}.
\]
In $\CA(\mathscr{L})$ we have by \eqref{eq:Aalbepq-rels}
\[HX_1 = X_1 (H-1)\qquad HX_2 = X_2(H+1),\]
where we put $X_i=X_i^+$.
The exchange relation \eqref{eq:flip-identity} in this case is
\begin{equation}
X_2X_1=X_1X_2\cdot \frac{(H+1)H(H-3)^3}{(H+2)^3(H-1)(H-2)}
\end{equation}
Thus for all positive integers $k$ there is a rational function $f_k$ such that
\[
(X_1X_2)^k = X_1^k X_2^k f_k(H).
\]
One checks that for $k\le 5$,  $f_k$ has a pole at $H=0$, while for $k=6$, $f_k$ is regular but nonzero at $H=0$.
However, for $k=7$, $f_k(H)$ has a zero at $H=0$ of multiplicity $1$.
This proves that $(X_1X_2)^7=0$ in the quotient algebra $\CC_\la=\CC/(H-\la)$.
Moreover, by Corollary \ref{cor:quotient-basis}, $(X_1X_2)^6\neq 0$ in $\CC_\la$.

\begin{figure}
\centering 
\[
\begin{tikzpicture}[xscale=.5,yscale=.5]
\foreach \t in {0,2,...,12}
{
\draw[-,xshift= 3pt,yshift=-3pt,Purple] (0+\t,4+\t)--(2+\t,4+\t)--(2+\t,6+\t);
\draw[-,xshift= 0pt,yshift= 0pt,Purple] (0+\t,4+\t)--(2+\t,4+\t)--(2+\t,6+\t);
\draw[-,xshift=-3pt,yshift= 3pt,Purple] (0+\t,4+\t)--(2+\t,4+\t)--(2+\t,6+\t);

\draw[-,xshift= 1.5pt,yshift=-1.5pt,OliveGreen] (2+\t,2+\t)--(2+\t,4+\t)--(4+\t,4+\t);
\draw[-,xshift=-1.5pt,yshift= 1.5pt,OliveGreen] (2+\t,2+\t)--(2+\t,4+\t)--(4+\t,4+\t);

\draw[-,Blue] (2+\t,2+\t)--(4+\t,2+\t)--(4+\t,4+\t);

\draw[-,xshift= 1.5pt,yshift=-1.5pt,OliveGreen] (4+\t,0+\t)--(4+\t,2+\t)--(6+\t,2+\t);
\draw[-,xshift=-1.5pt,yshift= 1.5pt,OliveGreen] (4+\t,0+\t)--(4+\t,2+\t)--(6+\t,2+\t);

\draw[-,xshift= 3pt,yshift=-3pt,Purple] (4+\t,0+\t)--(6+\t,0+\t)--(6+\t,2+\t);
\draw[-,xshift= 0pt,yshift= 0pt,Purple] (4+\t,0+\t)--(6+\t,0+\t)--(6+\t,2+\t);
\draw[-,xshift=-3pt,yshift= 3pt,Purple] (4+\t,0+\t)--(6+\t,0+\t)--(6+\t,2+\t);

\draw (0+\t,4+\t) circle (4pt) node[above left]{${\scriptscriptstyle 3}$};
\draw (2+\t,4+\t) circle (4pt) node[above left]{${\scriptscriptstyle -1}$};
\draw (4+\t,4+\t) circle (4pt) node[above left]{${\scriptscriptstyle -1}$};

\draw (4+\t,2+\t) circle (4pt) node[above left]{${\scriptscriptstyle 1}$};
\draw (6+\t,2+\t) circle (4pt) node[above left]{${\scriptscriptstyle 1}$};
\draw (6+\t,0+\t) circle (4pt) node[above left]{${\scriptscriptstyle -3}$};

}

\foreach \t in {0,2,...,12}
{
\draw[-,Orange] (3+\t,1+\t)--(3+\t,3+\t)--(5+\t,3+\t);
}

\foreach \t in {0,2,...,14}
{
\draw (\t cm+3cm-3pt,\t cm+1cm-3pt)--(\t cm+3cm+3pt,\t cm+1cm+3pt);
\draw (\t cm+3cm+3pt,\t cm+1cm-3pt)--(\t cm+3cm-3pt,\t cm+1cm+3pt);
\draw (\t+3,\t+1) node[above left] {${\scriptscriptstyle \lambda}$};
}

\draw[dashed,Orange] (3,3)--(3,15)--(15,15);
\end{tikzpicture}
\]
\caption{Part of a $(1,1)$-periodic higher spin vertex configuration. Since the face path starting at $\lambda$ and going up-right six times, minimizes the number of edge-crossings (six vertical/horizontal), we have $(X_1^+X_2^+)^6\neq 0$ in $\CJ_\la$. However with seven up-right steps (giving seven vertical/horizontal edge-crossings) one can do better: seven steps up followed by seven steps right gives only six edge-crossings. Therefore in $\CJ_\la$ we have $(X_1^+X_2^+)^7=0$.}
\label{fig:order7}
\end{figure}
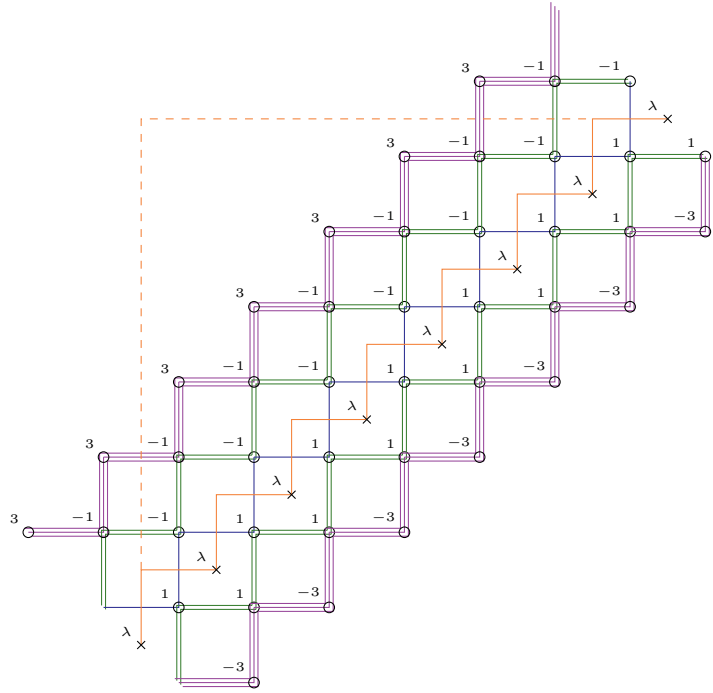
\end{Example}

\section{The center of $\CA(\mathscr{L})$}
\label{sec:center}
In this section we prove Theorem \ref{thm:B}(iii) and Theorem \ref{thm:C}.
Recall the definition of five-vertex configuration in Definition \ref{def:five-vertex}. For an example of a five-vertex configuration, see Figure \ref{fig:43-five-vertex}.
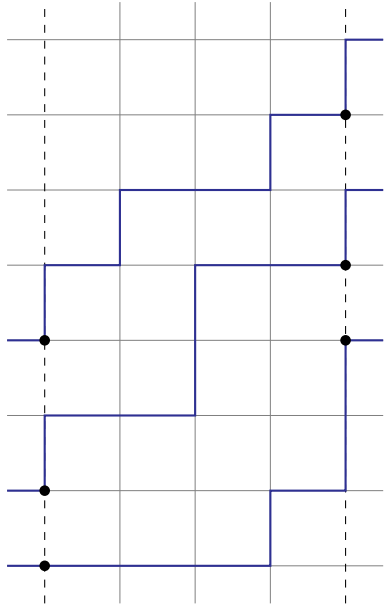
\begin{figure}
\centering
\[
\begin{tikzpicture}
\foreach \x in {1,...,3}{
\draw[help lines] (\x,-.5)--(\x,7.5);
}
\foreach \y in {0,...,7}{
\draw[help lines] (-.5,\y)--(4.5,\y);
}
\draw[dashed] (0,-.5)--(0,7.5);
\draw[dashed] (4,-.5)--(4,7.5);
\draw[thick,Blue] (-.5,0)--(0,0)--(3,0)--(3,1)--(4,1)--(4,3)--(4.5,3);
\draw[thick,Blue] (-.5,1)--(0,1)--(0,2)--(1,2)--(2,2)--(2,3)--(2,4)--(3,4)--(4,4)--(4,5)--(4.5,5);
\draw[thick,Blue] (-.5,3)--(0,3)--(0,4)--(1,4)--(1,5)--(2,5)--(3,5)--(3,6)--(4,6)--(4,7)--(4.5,7);
\fill (0,0) circle (2pt);
\fill (0,1) circle (2pt);
\fill (0,3) circle (2pt);
\fill (4,3) circle (2pt);
\fill (4,4) circle (2pt);
\fill (4,6) circle (2pt);
\end{tikzpicture}
\]
\caption{A $(4,3)$-periodic five-vertex configuration $\mathscr{L}$. By Theorem \ref{thm:C}, the center of the corresponding noncommutative Kleinian fiber product $\CA(\mathscr{L})$ is a Laurent polynomial algebra in one variable.}
\label{fig:43-five-vertex}
\end{figure}

\begin{Definition}
In a higher spin vertex configuration $\mathscr{L}$, a vertex $v\in V$ is \emph{north-eastern (NE)} if there is an edge to the left and below. That is, if $\mathscr{L}_1(v-\al_2/2)>0$ and $\mathscr{L}_2(v-\al_1/2)>0$. Similarly $v$ is \emph{south-western (SW)} if $\mathscr{L}_1(v+\al_2/2)>0$ and $\mathscr{L}_2(v+\al_1/2)>0$. 
\end{Definition}

Let $\Pi(\mathscr{L})$ be the configuration $\mathscr{L}$ regarded as a multiset of non-crossing vertex paths of period $(m,n)$.
Two paths are said to intersect if they share a vertex.

The following is a characterization of five-vertex configurations.
\begin{Lemma}\label{lem:five-characterization}
Let $(m,n)$ be a pair of relatively prime non-negative integers and $\mathscr{L}$ an $(m,n)$-periodic higher spin vertex configuration. The following statements are equivalent.
\begin{enumerate}[{\rm (i)}]
\item \label{it:five} $\mathscr{L}$ is a five-vertex configuration;
\item \label{it:five-NE} In $\mathscr{L}$ there are no NE vertices;
\item \label{it:five-SW} In $\mathscr{L}$ there are no SW vertices;
\item \label{it:five-plus} $P_1^\mathscr{L}(u+\al_2/2)$ and $P_2^\mathscr{L}(u+\al_1/2)$ are relatively prime in $\C[u]$;
\item \label{it:five-minus} $P_1^\mathscr{L}(u-\al_2/2)$ and $P_2^\mathscr{L}(u-\al_1/2)$ are relatively prime in $\C[u]$;
\item \label{it:five-Dyck} Any two distinct paths in $\Pi(\mathscr{L})$ are non-intersecting;
\item \label{it:five-component} Every connected component of $\T_{m,n}\setminus\overline{\mathscr{L}}$ is incontractible;
\item \label{it:five-Modules} There are no simple weight $\CA(\mathscr{L})$-modules of the form $M(D,0)$.
\end{enumerate}
\end{Lemma}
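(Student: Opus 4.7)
My approach is to prove the eight equivalences via three complementary tools: an algebraic reformulation of the polynomials $P_i^\mathscr{L}$, a local ice-rule case analysis at a single vertex, and a global conservation argument on the cylinder.

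For the polynomial equivalences $\mathrm{(ii)}\Leftrightarrow\mathrm{(v)}$ and $\mathrm{(iii)}\Leftrightarrow\mathrm{(iv)}$, I would reindex the edge-products defining $P_i^\mathscr{L}(u\pm\al_j/2)$ via the bijections $e\mapsto e\mp\al_{3-i}/2$ between $E_i$ and $V$, yielding
\[
P_1^\mathscr{L}(u\pm\al_2/2)=\prod_v(u-v)^{\mathscr{L}_1(v\pm\al_2/2)},\qquad P_2^\mathscr{L}(u\pm\al_1/2)=\prod_v(u-v)^{\mathscr{L}_2(v\pm\al_1/2)},
\]
where the products are taken over a set of orbit representatives of $V$ modulo $\Ga_{m,n}$. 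Since the factors $(u-v)$ are pairwise distinct, the gcd of such a pair is trivial iff no vertex contributes to both products. Identifying $\mathscr{L}_1(v\pm\al_2/2)$ with the multiplicity of the vertical edge above/below $v$ and $\mathscr{L}_2(v\pm\al_1/2)$ with the multiplicity of the horizontal edge right/left of $v$, this reformulates $\mathrm{(iv)}$ as ``no vertex has both mult-above and mult-right positive'' (i.e., $\mathrm{(iii)}$) and $\mathrm{(v)}$ as $\mathrm{(ii)}$.

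The local equivalence $\mathrm{(i)}\Leftrightarrow\mathrm{(ii)}\wedge\mathrm{(iii)}$ is an ice-rule case analysis: if $\mathrm{(i)}$ holds and $v$ is NE, then mult-left and mult-below positive forces mult-above and mult-right to vanish by the five-vertex constraint, contradicting the ice rule $\mathscr{L}_1(v-\al_2/2)+\mathscr{L}_2(v-\al_1/2)=\mathscr{L}_1(v+\al_2/2)+\mathscr{L}_2(v+\al_1/2)$; this gives $\mathrm{(i)}\Rightarrow\mathrm{(ii)}$ and dually $\mathrm{(i)}\Rightarrow\mathrm{(iii)}$. Conversely, if $v$ has at least three positive incident edges, a direct check on the unique zero edge combined with the ice rule shows $v$ must be NE or SW. The harder direction $\mathrm{(ii)}\Leftrightarrow\mathrm{(iii)}$ is not local (a vertex with multiplicities $(0,r+a,r,a)$ is SW but not NE); I would settle it using $(m,n)$-periodicity. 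In the non-crossing decomposition of $\mathscr{L}$ into monotone paths, an NE vertex is a merge of two incoming strands and an SW vertex is a split, and since $(m,n)$-periodicity forces these paths to close into loops on $\T_{m,n}$, the number of merges and splits within a fundamental domain must agree. An alternative algebraic proof combines the polynomial reformulations above with the Mazorchuk-Turowska equation \eqref{eq:MTeq} to manipulate the gcds of the shifted factors directly.

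Finally, the structural and topological equivalences $\mathrm{(i)}\Leftrightarrow\mathrm{(vi)}\Leftrightarrow\mathrm{(vii)}\Leftrightarrow\mathrm{(viii)}$ follow from standard arguments. In a 5-vertex configuration, two distinct paths sharing a vertex would contribute at least three distinct positive incident edges there, contradicting $\mathrm{(i)}$; conversely, non-intersecting paths automatically satisfy the 5-vertex condition at each vertex where they pass. For $\mathrm{(vi)}\Leftrightarrow\mathrm{(vii)}$: pairwise non-intersecting monotone $(m,n)$-periodic paths embed on $\T_{m,n}$ as a disjoint union of incontractible circles, whose complement decomposes into open annuli; any two paths sharing a vertex bound a contractible face in the complement. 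Finally, $\mathrm{(vii)}\Leftrightarrow\mathrm{(viii)}$ is immediate from Theorem \ref{thm:B}(i), which puts modules $M(D,0)$ in bijection with contractible components $D$. The main obstacle I anticipate throughout is the global equivalence $\mathrm{(ii)}\Leftrightarrow\mathrm{(iii)}$, which I expect to handle most cleanly via the algebraic route using MTE.
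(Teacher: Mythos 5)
Your plan follows the paper's proof essentially step by step: (i)$\Leftrightarrow$(ii)$\wedge$(iii) by the local ice-rule case analysis, the polynomial conditions as direct reformulations of absence of NE/SW vertices, and the cycle through (vi), (vii), (viii). One pleasant detail: your reindexing shows that a common root of $P_1^{\mathscr{L}}(u+\al_2/2)$ and $P_2^{\mathscr{L}}(u+\al_1/2)$ at $v$ forces edges above and to the right of $v$, i.e.\ an SW vertex, so the correct pairing is indeed (iii)$\Leftrightarrow$(iv) and (ii)$\Leftrightarrow$(v), exactly as you say (the paper's proof text has these swapped, a harmless mislabeling).

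The one place where your outline risks not closing the logical loop is the handling of (ii)$\Leftrightarrow$(iii). As stated, your implications connect $\{(i),(vi),(vii),(viii)\}$ into one cluster and link (ii) and (iii) to (v) and (iv) respectively, but (i)$\Leftrightarrow$(ii)$\wedge$(iii) alone does not give (ii)$\Rightarrow$(iii). The paper avoids any separate argument for (ii)$\Leftrightarrow$(iii) by proving the sharper implications (ii)$\Rightarrow$(vi) and (iii)$\Rightarrow$(vi) individually: assuming no NE vertex, two paths meeting at $v$ must share one of $v-\al_1$, $v-\al_2$ (since at most one of the two incoming edges carries flow), and iterating backwards forces the paths to coincide. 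Combined with (vii)$\Rightarrow$(ii)$\wedge$(iii) (the NE upper-right corner and SW lower-left corner of a contractible face), the cycle (ii)$\Rightarrow$(vi)$\Rightarrow$(vii)$\Rightarrow$(ii)$\wedge$(iii) makes (ii)$\Leftrightarrow$(iii) automatic. Your own intersection argument, if you track that a divergence point is SW and a merge point is NE and that any maximal shared arc on the cylinder is bounded by one of each, in fact yields the same thing; the key is to phrase it so each hypothesis alone suffices. Of the two alternatives you sketch, the merge/split conservation count is essentially this observation, but the MTE-only route is underdeveloped: $p_1(u+\al_2/2)p_2(u+\al_1/2)=p_1(u-\al_2/2)p_2(u-\al_1/2)$ by itself does not propagate coprimality from one side to the other without invoking the finiteness of the configuration, which would send you back to the combinatorial argument anyway.
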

\begin{proof}
\noindent\eqref{it:five}$\Leftrightarrow$\eqref{it:five-NE}$\wedge$\eqref{it:five-SW}:
The local vertex configuration at a vertex $v\in V$ (Figure \ref{fig:vertex}) is one of the five from Figure \ref{fig:five-vertex} if and only if $v$ is neither an NE vertex nor an SW vertex. 
 
\noindent\eqref{it:five-NE}$\Leftrightarrow$\eqref{it:five-plus}:
$v\in V$ is a common root of the polynomials if and only if $v$ is an NE vertex.

\noindent\eqref{it:five-SW}$\Leftrightarrow$\eqref{it:five-minus}: Analogous.

\noindent\eqref{it:five-NE}$\Rightarrow$\eqref{it:five-Dyck}:
Suppose two paths in $\Pi(\mathscr{L})$ intersect at a vertex $v\in V$. Since $v$ is not an NE vertex, either $v-\al_1$ or $v-\al_2$ is a shared vertex. Repeating this argument, we conclude that all vertices are shared between the two paths which means they are identical.

\noindent\eqref{it:five-SW}$\Rightarrow$\eqref{it:five-Dyck}: Analogous.

\noindent\eqref{it:five-Dyck}$\Rightarrow$\eqref{it:five-component}: Obvious.

\noindent\eqref{it:five-component} $\Rightarrow$ \eqref{it:five-NE}$\wedge$\eqref{it:five-SW}:
If $\T_{m,n}\setminus\overline{\mathscr{L}}$ has a contractible connected component $D$, then $\mathscr{L}$ contains both an NE vertex (the upper right corner of $D$) and an SE vertex (the lower left corner).

\noindent\eqref{it:five-component}$\Leftrightarrow$\eqref{it:five-Modules}: Obvious.
\end{proof}

Let $\CA(\mathscr{L})_{\mathrm{loc}}=\CA(\mathscr{L})\otimes_{\C[H]}\C(H)$.

\begin{Proposition}
\label{prp:centralizing-element}
Let $(m,n)$ be a pair of relatively prime positive integers, and let $\mathscr{L}$ be an $(m,n)$-periodic lattice configuration.
For $(k,l)\in\N^2$, define $C_{k,l}\in \CA(\mathscr{L})_{\mathrm{loc}}$ by
\begin{equation}\label{eq:Ckl}
C_{k,l}=X(\un{i})\prod_{\la\in F} (H-\la)^{-\ord(\un{i},\la)}
\end{equation}
where $\un{i}=i_1i_2\cdots i_{k+l}\in\mathsf{Seq}_2(k,l)$ is a sequence of $k$ $1$'s and $l$ $2$'s in any order, and $X(\un{i})=X_{i_{k+l}}^+\cdots X_{i_2}^+X_{i_1}^+$.
Then
\begin{enumerate}[{\rm (a)}]
\item $C_{k,l}$ does not depend on $\un{i}$, only on $(k,l)$.
\item The element
\begin{equation}\label{eq:C}
C=C_{m,n}
\end{equation}
belongs to the center of $\CA(\mathscr{L})_\mathrm{loc}$. In particular, $[C,\CA(\mathscr{L})]=0$.
\item For any $\la\in F$ such that $\bar\la$ belongs to an incontractible connected component $D$ of $\T_{m,n}\setminus\overline{\mathscr{L}}$, the subquotient $\CC_\la$ is generated as a $\C$-algebra by the image of $C$ and its inverse.
\item $C$ belongs to the center of $\CA(\mathscr{L})$
if and only if $\mathscr{L}$ is a five-vertex configuration.
\end{enumerate}
\end{Proposition}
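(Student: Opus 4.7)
The plan is to address parts (a)--(d) in turn, with Lemma \ref{lem:GER} (the generalized exchange relation) as the main technical tool. Part (a) is immediate: for any $\un{i},\un{j}\in\mathsf{Seq}_2(k,l)$, Lemma \ref{lem:GER} gives $X(\un{i})\prod_\la(H-\la)^{-\ord(\un{i},\la)}=X(\un{j})\prod_\la(H-\la)^{-\ord(\un{j},\la)}$ in $\CA(\mathscr{L})_{\mathrm{loc}}$, so the defining expression for $C_{k,l}$ is independent of $\un{i}$.

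For part (b), since $\deg C=(m,n)\in G_0$, the element $C$ commutes with $H$ and hence with all of $\C[H]$, so it suffices to verify $[C,X_i^\pm]=0$. For $X_1^+$, I would exploit the freedom from (a) in choosing a representative for $C_{m+1,n}$: given $\un{i}\in\mathsf{Seq}_2(m,n)$, both $\un{i}'=\un{i}1$ and $\un{i}''=1\un{i}$ lie in $\mathsf{Seq}_2(m+1,n)$, with $X(\un{i}')=X_1^+X(\un{i})$ and $X(\un{i}'')=X(\un{i})X_1^+$. Pushing $X_1^+$ across $C=X(\un{i})\prod_\mu(H-\mu)^{-\ord(\un{i},\mu)}$ via $X_1^+f(H)=f(H-\al_1)X_1^+$ rewrites both $X_1^+C$ and $CX_1^+$ as $C_{m+1,n}$ times a polynomial factor; these factors must agree by (a), and direct computation of the exponents reduces the identity to $\ord_\mathscr{L}(\mu+(m+1/2,n+1/2))=\ord_\mathscr{L}(\mu+(1/2,1/2))$ for all $\mu\in F$, which is immediate from the $(m,n)$-periodicity of $\mathscr{L}$. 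The case $X_2^+$ is symmetric. For $X_1^-$, I would apply the Leibniz rule
\[ 0=[X_1^-X_1^+,C]=X_1^-[X_1^+,C]+[X_1^-,C]X_1^+=[X_1^-,C]X_1^+, \]
valid because $X_1^-X_1^+=P_1^\mathscr{L}(H+\al_1/2)\in\C[H]$ commutes with $C$, and then right-cancel $X_1^+$. This cancellation is justified by Corollary \ref{cor:PIDCGR}: $\CA(\mathscr{L})_{(m-1,n)}$ is a free rank-one right $\C[H]$-module, and right multiplication by $X_1^+$ is injective there because $aX_1^+=0$ forces $aP_1^\mathscr{L}(H-\al_1/2)=0$ with $P_1^\mathscr{L}\neq 0$. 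The case $X_2^-$ is analogous.

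For (c), incontractibility of $D$ yields some $\un{i}\in\mathsf{Seq}_2(m,n)$ such that $\bar\pi(\un{i},\la)$ avoids $\overline{\mathscr{L}}$, and by Lemma \ref{lem:order} this forces $\ord(\un{i},\la)=0$. In $\CC_\la$ the factor $(H-\la)^{-\ord(\un{i},\la)}$ is trivial, while each $(H-\mu)^{-\ord(\un{i},\mu)}$ for $\mu\neq\la$ reduces to a nonzero scalar. Hence the image of $C$ agrees with the image of $X(\un{i})$ up to a unit, and the latter is the generator $L$ of $\CC_\la\simeq\C[L,L^{-1}]$ produced in Proposition \ref{prp:B_la-J_la-Description}(b); thus $C$ and $C^{-1}$ generate $\CC_\la$.

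For (d), if $C\in\CA(\mathscr{L})$ then (b) automatically places $C$ in the center, so the content is the ``$C\in\CA(\mathscr{L})$ iff five-vertex'' equivalence. Writing $\CA(\mathscr{L})_{(m,n)}=a\cdot\C[H]$ as a free rank-one right $\C[H]$-module (Theorem \ref{thm:PIDCGR}) and $X(\un{i})=a\cdot g_{\un{i}}(H)$, the membership $C\in\CA(\mathscr{L})_{(m,n)}$ reduces to the divisibility $\prod_\mu(H-\mu)^{\ord(\un{i},\mu)}\mid g_{\un{i}}(H)$ in $\C[H]$. Applying Corollary \ref{cor:quotient-basis} to a sequence $\un{j}_0$ minimizing $\ord(\cdot,\mu)$ gives $g_{\un{j}_0}(\mu)\neq 0$, and combining this with the exchange relation yields the multiplicity formula
\[ \ord_{H=\mu}\bigl(g_{\un{i}}(H)\bigr)=\ord(\un{i},\mu)-\min_{\un{j}\in\mathsf{Seq}_2(m,n)}\ord(\un{j},\mu). \]
The divisibility then holds iff $\min_{\un{j}}\ord(\un{j},\mu)=0$ for every $\mu\in F$, which by Lemma \ref{lem:order} is equivalent to every $\bar\mu$ lying in an incontractible component of $\T_{m,n}\setminus\overline{\mathscr{L}}$; by Lemma \ref{lem:five-characterization}\eqref{it:five-component} this is precisely the five-vertex condition. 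I expect the main obstacle to be the bookkeeping in part (d)---extracting the multiplicity formula cleanly from the generalized exchange relation and verifying that the pointwise existence of non-crossing cylinder paths at every face matches the global five-vertex characterization.
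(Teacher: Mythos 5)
Your proof is correct in overall architecture and matches the paper's proof for parts (a) and (c). For (b) you use the same core idea -- expand both $X_1^+C$ and $CX_1^+$ via $C_{m+1,n}$ using the generalized exchange relation and compare the rational-function factors -- but the final combinatorial step is stated imprecisely. The identity that actually needs to be verified is
\[
\ord(\un{i}1,\mu)-\ord(1\un{i},\mu)\;=\;\ord(\un{i},\mu)-\ord(\un{i},\mu+\al_1)\qquad\text{for all }\mu\in F,
\]
which involves a \emph{sum} of corner orders over a region between two paths (this is what the paper argues geometrically), not a single-vertex periodicity statement; the pointwise equality $\ord_\mathscr{L}(v+(m,n))=\ord_\mathscr{L}(v)$ is true but is not by itself what the exponent identity reduces to. Also, the phrase ``these factors must agree by (a)'' is backwards: part (a) only makes $C_{m+1,n}$ well-defined, it does not force the two residual factors to coincide -- that is precisely what the combinatorial computation must establish. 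Your treatment of $X_1^-$ via the Leibniz identity $0=[X_1^-X_1^+,C]=[X_1^-,C]X_1^+$ and right-cancellation is a clean alternative to the paper's use of the involution $\ast$; in fact you can simplify by noting that cancellation of $X_1^+$ is immediate since $\CA(\mathscr{L})_{\mathrm{loc}}$ is a domain, so the CGR detour is unnecessary there.

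For (d) you take a genuinely different route. The paper argues the forward direction by showing $C^\ast C$ is a nonzero scalar, hence $C$ is invertible, hence no contractible component can support a finite-dimensional module killed by $C$; for the converse it uses a Bezout identity among the polynomials $f_{\un{i}}=\prod_\mu(H-\mu)^{\ord(\un{i},\mu)}$. You instead reduce $C\in\CA(\mathscr{L})$ to a divisibility in $\C[H]$ via the free rank-one structure of $\CA(\mathscr{L})_{(m,n)}$ (Theorem \ref{thm:PIDCGR}), then compute the vanishing order of $g_{\un{i}}$ at each $\mu\in F$ by combining the generalized exchange relation with the non-vanishing $g_{\un{j}_\mu}(\mu)\neq 0$ supplied by Corollary \ref{cor:quotient-basis}, and finally invoke Lemma \ref{lem:order} and Lemma \ref{lem:five-characterization}\eqref{it:five-component}. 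This is a valid argument. It buys you a unified treatment of both implications and a sharper, quantitative multiplicity formula; the paper's proof is shorter and exploits the module-theoretic content of Theorem \ref{thm:B}, but it relies on $\CA(\mathscr{L})$ being a domain to conclude $C^\ast C\neq 0$. One point worth making explicit in your argument: the rational function $c(H)$ with $C=a\cdot c(H)$ can only have poles at points of $F$ (since $c=g_{\un{i}}/f_{\un{i}}$ and $f_{\un{i}}$ vanishes only on $F$), so checking pole-freeness at each $\mu\in F$ indeed suffices to conclude $c\in\C[H]$.
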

\begin{proof}
(a) Follows immediately from the generalized exchange relation \eqref{eq:GER}.

(b)  Since $C$ has degree $(m,n)$ and $m\al_1+n\al_2=0$, it commutes with $H$. We show that $C$ commutes with $X_1^+$. Then by applying a transposition isomorphism from Lemma \ref{lem:fiber-isomorphisms}\eqref{it:fiber-transposition} it follows that $X_2^+$ also commutes with $C$, and applying the involution $\ast$ it follows that $C$ commutes with $X_1^-$ and $X_2^-$. 
We have
\begin{align*}
X_1^+ C &= X(\un{i}1) \prod_{\la\in F} (H-\la)^{-\ord(\un{i},\la)} \\ 
   &= C_{m+1,n} \prod_{\la\in F} (H-\la)^{\ord(\un{i}1,\la)-\ord(\un{i},\la)} \\ 
   &= X(1\un{i})\prod_{\la\in F} (H-\la)^{-\ord(1\un{i},\la)+\ord(\un{i}1,\la)-\ord(\un{i},\la)}\\
   &= X(\un{i})\prod_{\la\in F} (H-\al_1-\la)^{-\ord(1\un{i},\la)+\ord(\un{i}1,\la)-\ord(\un{i},\la)}X_1^+\\
   &= X(\un{i})\prod_{\la\in F} (H-\la)^{-\ord(1\un{i},\la-\al_1)+\ord(\un{i}1,\la-\al_1)-\ord(\un{i},\la-\al_1)}X_1^+
\end{align*}
It remains to be shown that
\[ -\ord(1\un{i},\la-\al_1)+\ord(\un{i}1,\la-\al_1)-\ord(\un{i},\la-\al_1) = -\ord(\un{i},\la), \]
or equivalently that
\begin{equation}
\ord(\un{i},\la)-\ord(\un{i},\la-\al_1)=\ord(1\un{i},\la-\al_1) - \ord(\un{i}1,\la-\al_1).
\end{equation}
Since the order of a path is independent of the choice of base point, $\ord(1\un{i},\la-\al_1)=\ord(\un{i}1,\la)$. Substituting this it is easy to see that the equality holds. Both sides count the same thing, the sum of orders of corners between the lower path $(\un{i},\la)$ and the upper path $(\un{i},\la-\al_1)$.

(c) By Proposition \ref{prp:B_la-J_la-Description}, $\CC_\la$ is a Laurent polynomial algebra $\C[L,L^{-1}]$ where $L$ is some element of degree $(m,n)$. So it suffices to show that the image in $\CC_\la$ of the degree $(m,n)$ element $C$ is nonzero. Since $\bar\la$ belongs to an incontractible component, there exists $\un{i}\in\mathsf{Seq}_2(m,n)$ such that the based face path $\bar\pi(\un{i},\la)$ does not intersect $\overline{\mathscr{L}}$. By Lemma \ref{lem:order}, $\ord(\un{i},\la)=0$. Thus by part (a), $C=X(\un{i})\prod_{\mu\in F}(H-\mu)^{-\ord(\un{i},\mu)}$ has a well-defined and nonzero image in $\CC_\la$ (obtained by setting $H=\la$ in the product).

(d)  Suppose $C$ belongs to the center of $\CA(\mathscr{L})$. Then so does $C^\ast$. Therefore the element $C^\ast\cdot C$ is a central element of degree zero. That is, $C^\ast \cdot C$ is a polynomial $c(H)$ of $H$ which commutes with $X_i^+$ for $i\in\{1,2\}$. But $X_i^+ c(H) = c(H-\al_i) X_i^+$, hence $c(H)$ is $\al_i$-periodic for $i\in\{1,2\}$. Since $(\al_1,\al_2)\neq (0,0)$, this implies that the polynomial $c(H)$ is a constant, say $c\in \C$. Since $\CA(\mathscr{L})$ is a domain, $c\neq 0$, and hence $C$ is invertible. If $\T_{m,n}\setminus\overline{\mathscr{L}}$ had a contractible connected component $D$, it implies by Theorem \ref{thm:B} that there exists a simple weight module $M(D,0)$ such that $C M=0$ which is absurd. Therefore $\mathscr{L}$ is a five-vertex configuration by Lemma \ref{lem:five-characterization}.

Conversely, by part (a), for each sequence $\un{i}\in\mathsf{Seq}_2(m,n)$ we can write
\begin{equation}\label{eq:pf-center-c1}
C = X(\un{i})\frac{1}{f_{\un{i}}},
\end{equation}
where $f_{\un{i}}=\prod_{\la\in F}(H-\la)^{\ord(\un{i},\la)}$. If every connected component of $\T_{m,n}\setminus\overline{\mathscr{L}}$ is incontractible, for each $\la\in F$ there exists some sequence $\un{i}_\la\in\mathsf{Seq}_2(m,n)$ such that the path $\bar\pi(\un{i}_\la,\la)$ bisects the edges in $\mathscr{L}$ into two disjoint configurations. This implies that $\ord(\un{i}_\la,\la)=0$ by Lemma \ref{lem:order}. By definition of $f_{\un{i}}$, we obtain $f_{\un{i}_\la}(\la)\neq 0$.
Thus the set of polynomials $f_{\un{i}}$ has no common zeros.
Therefore the ideal they generate in the polynomial ring $\C[H]$ contains $1$. That is, there exist $g_{\un{i}}\in\C[H]$ such that the Bezout identity holds:
\[1=\sum_{\un{i}} h_{\un{i}} g_{\un{i}}.\]
Multiplying this by $C$ and using \eqref{eq:pf-center-c1} we get
\[C=\sum_{\un{i}} X(\un{i}) g_{\un{i}},\]
which is an element of $\CA(\mathscr{L})$. By part (b) it thus belongs to the center of $\CA(\mathscr{L})$.
\end{proof}

Proposition \ref{prp:centralizing-element}(c) proves Theorem \ref{thm:B}(iii).
We now prove our third and final main theorem.

\begin{proof}[Proof of Theorem \ref{thm:C}]
Put $\CA=\CA(\mathscr{L})$.
Let $z$ be an element from the center of $\CA(\mathscr{L})$. Let $C=C_{m,n}$ be the central element of $\CA_{\mathrm{loc}}=\CA\otimes_{\C[H]}\C(H)$ from Proposition \ref{prp:centralizing-element}(b). It suffices to show that
 $z\in\C[C,C^{-1}]$, because then $z\in\C[C,C^{-1}]\cap\CA$ and $\C[C,C^{-1}]\cap\CA$ equals $\C[C,C^{-1}]$ if $\mathscr{L}$ is a five-vertex configuration and equals $\C$ otherwise, by Proposition \ref{prp:centralizing-element}(d).

Since the center of any graded algebra is a graded subalgebra, we can assume without loss of generality that $z$ is homogeneous with respect to the $\Z^2$-gradation on $\CA(\mathscr{L})$. Let $(d_1,d_2)=\deg z$. Then $0=[H,z]=(d_1\al_1+d_2\al_2)z$ and hence $(d_1,d_2)=(km,kn)$ for some $k\in\Z$. Replacing $z$ by $z^\ast$ if necessary, we can assume $k\ge 0$.
By Theorem \ref{thm:PIDCGR}, $\CA$ is a crystalline graded ring. Hence, in particular, there exists an element $b_k\in\CA$ of degree $(km,kn)$ such that $\CA_{(km,kn)}$ is a free left $\C[H]$-module with basis element $b_k$. Therefore there exists $z_k\in \C[H]$ such that $z=z_k b_k$.
By Proposition \ref{prp:centralizing-element}(b), there is a nonzero central element $C$ of degree $(m,n)$ in $\CA_{\mathrm{loc}}$. Therefore there exists a rational function $c_k\in \C(H)$ such that $C^k=c_k b_k$. Now
\begin{equation}\label{eq:ckzk1}
 c_k z_k b_k X_i^+ = c_k z X_i^+ = c_k X_i^+ z = c_k X_i^+ z_k b_k = c_k \si_i(z_k) X_i^+ b_k
\end{equation}
where $\si_i$ is the automorphism of $\C(H)$ determined by $\si_i(H)=H-\al_i$.
On the other hand,
\begin{equation}\label{eq:ckzk2}
 c_k z_k b_k X_i^+ = z_k C^k X_i^+ = z_k X_i^+ C^k = z_k X_i^+ c_k b_k = z_k \si_i(c_k) X_i^+ b_k
\end{equation}
Since $\CA_\mathrm{loc}$ is a domain, \eqref{eq:ckzk1}-\eqref{eq:ckzk2} imply that the rational function $z_k/c_k$ is $\al_i$-periodic for $i=1,2$. Since $(\al_1,\al_2)\neq (0,0)$, this implies that $z_k/c_k\in\C$. Hence $z\in \C C^k$ which proves the claim.
\end{proof}

\begin{Example}
In this example we calculate the generator of the center of $\CA(\mathscr{L})$, where $\mathscr{L}$ is given by Figure \ref{fig:21-single}.
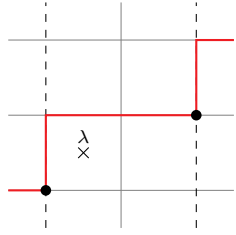
\begin{figure}
\centering
\begin{tikzpicture}[xscale=1,yscale=1]
\draw[help lines] (-.5,0)--(2.5,0);
\draw[help lines] (-.5,1)--(2.5,1);
\draw[help lines] (-.5,2)--(2.5,2);
\draw[help lines] (1,-.5)--(1,2.5);
\draw[dashed] (0,-.5)--(0,2.5);
\draw[dashed] (2,-.5)--(2,2.5);
\draw[-,thick,Red] (-.5,0)--(0,0)--(0,1)--(1,1)--(2,1)--(2,2)--(2.5,2);
\fill (0,0) circle (2pt);
\fill (2,1) circle (2pt);
\draw (.5cm-2pt,.5cm-2pt)--(.5cm+2pt,.5cm+2pt);
\draw (.5cm+2pt,.5cm-2pt)--(.5cm-2pt,.5cm+2pt);
\draw (.5,.5) node[font=\scriptsize, anchor=south] {$\la$};
\end{tikzpicture}
\caption{A $(2,1)$-periodic higher spin vertex configuration consisting of a single vertex path.}
\label{fig:21-single}
\end{figure}
Choose $(\al_1,\al_2)=(-1,2)$ and $\la=0$. Put $X_i=X_i^+$, $R=\C[H]$, $\CA=\CA(\mathscr{L})$ for brevity.
By Theorem \ref{thm:C}, $Z(\CA)=\C[C,C^{-1}]$ and by Proposition \ref{prp:centralizing-element},
\begin{equation}\label{eq:ex-21-single-C}
C=X_1^2X_2\frac{1}{(H+1)H}=X_1X_2X_1\frac{1}{H(H-1)}=X_2X_1^2\frac{1}{(H-1)(H-2)},
\end{equation}
each expression corresponding to an element of  $\mathsf{Seq}_2(2,1)=\{211,121,112\}$. For example, the only $\mu\in F$ for which $\ord(211,\mu)=1$ are $\mu=\la=0$ and $\mu=\la+\al_1=-1$, which gives the leftmost equality in \eqref{eq:ex-21-single-C}.
To write $C$ as an element of $\CA(\mathscr{L})$ we need to find $f_1,f_2,f_3\in\C[H]$ such that
\begin{equation}\label{eq:ex-21-single}
f_1\cdot (H+1)H+f_2\cdot H(H-1)+f_3\cdot (H-1)(H-2)=1
\end{equation}
One solution is $f_1=f_3=1/2$, $f_2=-1$.
Multiplying both sides of \eqref{eq:ex-21-single} by $C$, using \eqref{eq:ex-21-single-C} we get
\[C = \frac{1}{2} (X_1^2X_2 -2 X_1X_2X_1 + X_2X_1^2)=\frac{1}{2}[X_1,[X_1,X_2]].\]
The inverse of $C$ is a nonzero scalar multiplied by its dual
\[C^\ast = \frac{1}{2}(Y_2Y_1^2 - 2 Y_1Y_2Y_1 + Y_1^2Y_2) = \frac{1}{2}[[Y_2,Y_1],Y_1], \]
where $Y_i=X_i^-$. That is, $C\cdot C^\ast = C^\ast \cdot C$ is some nonzero complex number.
Note that by \eqref{eq:ex-21-single-C}, since $\CA_{(2,1)}$ is generated as a left $R$-module by the three monomials $X_1^2X_2$, $X_1X_2X_1$, $X_2X_1^2$, it is clear that $\CA_{(2,1)}=R\cdot C = C\cdot R$. This again illustrates that $\CA$ is a crystalline graded ring (see Theorem \ref{thm:PIDCGR}).
\end{Example}

\section{Conclusion and further directions}\label{sec:conclusion}

To each $(m,n)$-periodic higher spin vertex configuration $\mathscr{L}$ we have attached an associative algebra $\CA(\mathscr{L})$. On the one hand, this algebra is a noncommutative deformation of a fiber product of two Kleinian singularities of types $A_{n-1}$ and $A_{m-1}$, where $n$ (respectively $m$) is the number of vertical (respectively horizontal) edges in $\mathscr{L}$, counted with multiplicity.
On the other hand $\CA(\mathscr{L})$ is an example of a twisted generalized Weyl algebra, as well as a crystalline graded ring.

Properties of $\mathscr{L}$ are reflected in $\CA(\mathscr{L})$ and its modules. For example:

 The center of $\CA(\mathscr{L})$ is non-trivial if and only if $\mathscr{L}$ is a five-vertex configuration, in which case the center is a Laurent polynomial algebra in a generator given in terms of $\mathscr{L}$ (Theorem \ref{thm:C}).

For $\la\in\Z$, the left $\CA(\mathscr{L})$-module $\CA(\mathscr{L})/\CA(\mathscr{L})(H-\la)$ has a basis parametrized by lattice paths starting at $\bar\la$ and crossing a minimal number of edges in $\mathscr{L}$ (Corollary \ref{cor:quotient-basis}).

Call $\mathscr{L}$ \emph{non-percolating} if it is a connected subset of $\R^2$ (when viewed as the union of its edges regarded as closed line segments). Then the category of finite-dimensional $\CA(\mathscr{L})$-modules has finitely many isoclasses of simple objects if and only if $\mathscr{L}$ is non-percolating (Theorem \ref{thm:B}).

$\mathscr{L}$ is a six-vertex configuration if all edge labels are $0$ or $1$. This is equivalent to that the polynomials $P_i^\mathscr{L}(u)$ have no multiple roots. This in turn is expected to be related to the global dimension of $\CA(\mathscr{L})$ (due to the Kleinian singularity case  \cite[Thm.~5]{Bav1992}) and to the semisimplicity of the category of finite-dimensional modules over $\CA(\mathscr{L})$ (see \cite[Thm.~3.3]{Bav1992}).

We end by stating some possible future directions of research.

Defining a probability measure on the set $\Omega_{m,n}$ of all configurations $\mathscr{L}$ (for example through local Boltzmann weights), we obtain a \emph{random category} $\mathscr{W}_{m,n}$: the set of integral weight module categories $\big\{\mathscr{W}(\mathscr{L})_\Z\big\}_{\mathscr{L}\in\Omega_{m,n}}$ equipped with a probability measure. 

\begin{Problem}
If local Boltzmann weights are chosen so that the $(m,n)$-periodic higher spin vertex model is integrable (giving explicit formula for the partition function), what does this imply for the random category $\mathscr{W}_{m,n}$?
\end{Problem}

\begin{Problem}
What is the probability that the category of integral weight $\CA(\mathscr{L})$-modules is semi-simple?
\end{Problem}

A quantum analog $\CA_q(\mathscr{L})$ can naturally be defined by replacing $\C[H]$ by $\C[K,K^{-1}]$, any factor $(H-a)$ by $\frac{q^{-a}K-q^aK^{-1}}{q-q^{-1}}$, and suitably $q$-deformed defining relations.
\begin{Problem}
Determine all simple finite-dimensional (or more generally, weight) modules over $\CA_q(\mathscr{L})$ when $q$ is a root of unity.
\end{Problem}

\begin{Problem}
Crawley-Bovey and Holland \cite{CraHol1998} have defined noncommutative deformations of any type $ADE$ Kleinian singularity.
Is there a natural deformation of a fiber product of two such Kleinian singularities that would generalize our type $A\times A$ algebra $\CA_{\al_1,\al_2}(p_1,p_2)$?
\end{Problem}

\begin{Problem}
Since $\CA_{\al_1,\al_2}(p_1,p_2)$ are deformations of three-dimensional singular varieties, are they related to  noncommutative crepant resolutions \cite{Van2004}?
\end{Problem}

\section*{Appendix A}
Here we give some details on the proof of the existence of the surjective homomorphism $\mathcal{W}\to\CA^{(d)}/(C-\la)$ from Example \ref{ex:finite-W}.

Put $(\ad a)(b)=[a,b]=ab-ba$. First we show that if $\al_1\neq 0$ and $p_2(u)$ has degree $k$, then the Serre relation
\begin{equation}\label{eq:w-pf-id1}
(\ad X_1^+)^{k+1}(X_2^+)=0
\end{equation}
holds in $\CA_{\al_1,\al_2}(p_1,p_2)$. To see this we use techniques from \cite{Har2009}. In $\tilde{\CA}$ we have,  using \eqref{eq:Aalbepq-rels},
\begin{equation}\label{eq:w-pf-id2}
X_2^- \cdot (\ad X_1^+)^{k+1}(X_2^+) = X_2^+ \cdot (\Id-\si_1)^{k+1}\big(p_2(H+\tfrac{\al_2}{2})\big)\cdot  X_1^{k+1},
\end{equation}
where $\si_1(H)=H-\al_1$. Since each application of the difference operator $\Id-\si_i$ lowers the degree by one, and $\deg p_2=k$, it follows that the right hand side of \eqref{eq:w-pf-id2} is zero. Thus the left hand side of  \eqref{eq:w-pf-id1} is zero after multiplying from the left by the nonzero polynomial $f(H)=P_2(H-\al_2/2)=X_2^+X_2^-$, and hence belongs to the ideal $\CI$ from Definition \ref{def:NCKFP}. Thus \eqref{eq:w-pf-id1} holds in the quotient $\CA$.

Thus, returning to Example \ref{ex:finite-W}, we immediately conclude that in $\CA^{(d)}$ we have
\[[\varphi(J^+),\varphi(S^+)]=\frac{-1}{4}(\ad X_1^+)^3(X_2^+)=0\]
simply because $p_2$ has degree two.

By results in Section \ref{sec:center}, (here it is crucial that $d>1$ in order for $\mathscr{L}$ to be a five-vertex configuration), the center of $\CA^{(d)}$ is a Laurent polynomial algebra in the degree $(1,1)$ generator
\begin{equation}\label{eq:w-pf-C}
C=X_2^+X_1^+\frac{1}{(H-1)(H-1-d)}=X_1^+X_2^+\frac{1}{H(H-d)}
\end{equation}
Putting $f(H)=(H-1)(H-1-d)$ and $g(H)=H(H-d)$ one verifies the Bezout identity
\[d^2-1=(-2H+d-1)f(H) + (2H-d-3)g(H).\]
Multiplying both sides by $\frac{1}{d^2-1}C$ using \eqref{eq:w-pf-C} we obtain \eqref{eq:w-ex-C}.
To compute $C^\ast\cdot C$, it suffices to simplify it to a an expression involving polynomials in $H$, then set $H=0$ (since we know by the proof of Theorem \ref{thm:C} that $C^\ast \cdot C$ is a non-zero scalar).

By a direct computation we have $\varphi(c_2)=\frac{1}{2}(d^2-1)$.

Next we show that $[S^+,S^-]=(w_2-c_2)J^0$ is preserved by $\varphi$. We have, putting $X_i=X_i^+$,
\[[\varphi(S^+),\varphi(S^-)]=\frac{1}{4}(-X_1^2X_2^2+2(X_1X_2)^2-2(X_2X_1)^2+X_2^2X_1^2)\]
Using \eqref{eq:w-pf-C}, substitute $X_1X_2$ and $X_2X_1$ by a corresponding expression involving $C$ and $H$, twice, to get
\[C^2 (-H+\frac{d+1}{2})=C^2 \varphi(J_0) = \big(\varphi(w_2)-\varphi(c_2)\big) \varphi(J_0)\]
as desired.
It is straightforward to check that the remaining relations in $\mathcal{W}$ are preserved by $\varphi$. This gives the homomorphism $\varphi:\mathcal{W}\to\CA^{(d)}$.

It remains to show that composing $\varphi$ with the canonical map $\CA^{(d)}\to\CA^{(d)}/(C-\la)$ yields a  surjection. By definition, the image of contains $X_1^+, X_1^-, X_2^+$ and $H$. We must show that the image also contains $X_2^-$. We have
\[C^\ast = \frac{1}{(H-1)(H-1-d)}X_1^-X_2^-\]
hence
\[ X_1^+ C^\ast = \frac{1}{H(H-d)}X_1^+X_1^-X_2^-=X_2^-\]
Since $C^\ast \cdot C = 1$, $C^\ast$ is a nonzero scalar in $\CA^{(d)}/(C-\la)$, and thus $X_2^{-}$ is in the image.

\bibliographystyle{siam}

\begin{thebibliography}{999}
%
\bibitem{Bav1991}
	V.V. Bavula,
   \emph{Finite dimensionality of Ext$^n$'s and Tor$_n$'s of simple modules over one class of algebras},
   Funktsional. Anal. i Prilozhen. 25 Issue 3 (1991) 229--230.
%
\bibitem{Bav1992}
   V.V. Bavula,
   \emph{Generalized Weyl algebras and their representations},
	Algebra i Analiz 4 Issue 1 (1992) 75--97.
%
\bibitem{Bav1996}
   V. Bavula,
   \emph{Filter dimension of algebras and modules, a simplicity criterion of generalized Weyl algebras},
   Comm. Algebra 24 Issue 6 (1996) 1971--1992. 
%
\bibitem{BavBek2000}
   V. Bavula, V. Bekkert,
   \emph{Indecomposable representations of generalized Weyl algebras}
   Comm. Algebra 28 Issue 11 (2000) 5067--5100. 
%
\bibitem{BavJor2001}
  \emph{Isomorphism problems and groups of automorphisms for generalized Weyl algebras},
  V.V. Bavula, D.A. Jordan,
  Trans. Amer. Math. Soc. 353 Issue 2 (2001) 769--794.
%
\bibitem{Bax2007}
  R.J. Baxter,
  \emph{Exactly solved models in statistical mechanics},
  Courier Corporation 2007.
%
\bibitem{Bod2006}
   P. Boddington,
   \emph{Deformations of type D Kleinian singularities},
   arXiv:math/0612853 [math.RA]
%
\bibitem{Brz2016}
	T. Brzezi{\'n}ski,
	\emph{Circle and line bundles over generalized Weyl algebras},
   Algebr. Represent. Theory 19 Issue 1 (2016) 57--69.
%
\bibitem{CarFer2004}
  J.M. Carballo, D.J. Fern\'{a}ndez, J. Negro, L.M. Nieto, 
  \emph{Polynomial Heisenberg algebras},
  J. Phys. A 37 Issue 43 (2004): 10349.
%
\bibitem{CraHol1998}
   W. Crawley-Boevey, M.P. Holland,
   \emph{Noncommutative deformations of Kleinian singularities},
   Duke Math. J. 92 (1998) 605--636.
%
\bibitem{DevVan1993}
   K. De Vos, P. Van Driel,
   \emph{The Kazhdan-Lusztig conjecture for finite W-algebras},
  Lett. Math. Phys. 35 Issue 4 (1995) 333--344.
%
\bibitem{DroFutOvs1994}
   Yu.A. Drozd, V.M. Futorny, S.A. Ovsienko,
   \emph{Harish-Chandra subalgebras and Gelfand-Zetlin modules}
   in: Finite dimensional algebras and related topics, vol 424, Kluwer, 1994.
%
\bibitem{DroGuzOvs1996}
	Yu.A. Drozd, B.L. Guzner, S.A. Ovsienko,
	\emph{Weight modules over generalized Weyl algebras},
	J. Algebra 184 Issue 2 (1996) 491--504.
%
\bibitem{FutGraMaz2014}
   V. Futorny, D. Grantcharov, V. Mazorchuk,
   \emph{Weight modules over infinite dimensional Weyl algebras}
   Proc. Amer. Math. Soc. 142 (2014) 3049--3057.
%
\bibitem{FutHar2012a}
   V. Futorny, J.T. Hartwig,
   \emph{On the consistency of twisted generalized Weyl algebras}
   Proc. Amer. Math. Soc. 140 (2012) 3349--3363; arXiv:1103.4374.
%
\bibitem{FutHar2012b}
   V. Futorny, J.T. Hartwig,
   \emph{Multiparameter twisted Weyl algebras},
   J. Algebra 357 (2012) 69--93.
%
\bibitem{GomRuiSie2005}
  C. G{\'o}mez, M. Ruiz-Altaba, G. Sierra,
  \emph{Quantum groups in two-dimensional physics},
  Cambridge University Press 2005.
%
\bibitem{Har2006}
   J.T. Hartwig,
   \emph{Locally finite simple weight modules over twisted generalized Weyl algebras},
   J. Algebra 303 Issue 1 (2006) 42--76.
%
\bibitem{Har2009}
   J.T. Hartwig,
   \emph{Twisted generalized Weyl algebras, polynomial Cartan matrices
   and Serre-type relations},
   Comm. Algebra 38 Issue 12 (2010) 4375--4389.
%
\bibitem{HarOin2013}
   J.T. Hartwig, J. \"Oinert,
   \emph{Simplicity and maximal commutative subalgebras of twisted generalized Weyl algebras},
   J. Algebra 373 (2013) 312--339.
%
\bibitem{HarRos2016}
   J.T. Hartwig, D. Rosso,
   \emph{Cylindrical Dyck paths and the Mazorchuk-Turowska equation},
   J. Algebraic Combin. 44 Issue 1 (2016), 223--247.
%
\bibitem{HarSer2016}
   J.T. Hartwig, V. Serganova,
   \emph{Twisted Generalized Weyl Algebras and Primitive Quotients of Enveloping Algebras}
   Algebr. Represent. Theory 19 Issue 2 (2016) 277--313.
%
\bibitem{Hod1993}
   T.J. Hodges,
   \emph{Noncommutative deformations of type-A Kleinian singularities}
   J. Algebra 161 (1993) 271--290.
%
\bibitem{Jor1993}
   D.A. Jordan,
   \emph{Primitivity in skew Laurent polynomial rings and related rings},
   Math. Z. 213 (1993) 353--371.
%
\bibitem{LepMcC1973}
   J. Lepowsky, G. McCollum,
   \emph{On the determination of irreducible modules by restriction to a subalgebra}
   Trans. Amer. Math. Soc. 176 (1973) 45--57.
%
\bibitem{Lev2009}
  P. Levy,
  \emph{Isomorphism problems of noncommutative deformations of type $D$ Kleinian singularities},
  Trans. Amer. Math. Soc. 361 Issue 5 (2009) 2351--2375.
%
\bibitem{MazPonTur2003}
   V. Mazorchuk, M. Ponomarenko, L. Turowska,
   \emph{Some associative algebras related to $U(\mathfrak{g})$ and twisted generalized Weyl algebras},
   Math. Scand. 92 Issue 1 (2003) 5--30.
%
\bibitem{MazTur1999}
   V. Mazorchuk, L. Turowska,
   \emph{Simple weight modules over twisted generalized Weyl algebras},
   Comm. Algebra 27 Issue 6 (1999) 2613--2625.
%
\bibitem{NauVan2008}
   E. Nauwelaerts, F. Van Oystaeyen,
   \emph{Introducing crystalline graded algebras},
   Algebr. Represent. Theory 11 (2008) 133--148.
%
\bibitem{NeiVan2009}
  T. Neijens, F. Van Oystaeyen,
  \emph{Dimension of Crystalline Graded Rings},
  arXiv:0903.4645.
%
\bibitem{NeiVan2010}
   T. Neijens, F. Van Oystaeyen,
   \emph{Maximal graded orders over crystalline graded rings},
   J. Algebra 324 Issue 6 (2010) 1229--1258.
%
\bibitem{OinSil2009}
   J. {\"O}inert, S.D. Silvestrov,
   \emph{Commutativity and ideals in pre-crystalline graded rings},
   Acta Appl. Math. 108 Issue 3 (2009) 603--615.
%
\bibitem{RagSor1998}
   E. Ragoucy, P. Sorba,
   \emph{A remarkable connection between Yangians and finite W-algebras},
   arXiv:hep-th/9803242.
%
\bibitem{Ros1995}
   A.L. Rosenberg,
   \emph{Noncommutative algebraic geometry and representations of quantized algebras},
   Kluwer, 1995.
%
\bibitem{Van2004}
   M. Van den Bergh,
   \emph{Three-dimensional flops and noncommutative rings},
   Duke Math. J. 122 Issue 3 (2004), 423--455.
%
\bibitem{Zin2009}
   P. Zinn-Justin,
   \emph{Six-vertex, loop and tiling models: integrability and combinatorics},
   arXiv:0901.0665.
\end{thebibliography}

\end{document}